\newtheorem{thm}{Theorem}[section]
\newcommand{\bt}{\begin{thm}}
\newcommand{\et}{\end{thm}}
\newtheorem{ex}[thm]{Example}
\newtheorem{cor}[thm]{Corollary}   
\newcommand{\bc}{\begin{cor}}
\newcommand{\ec}{\end{cor}}
\newtheorem{lem}[thm]{Lemma}   
\newcommand{\bl}{\begin{lem}}
\newcommand{\el}{\end{lem}}
\newtheorem{prop}[thm]{Proposition}
\newcommand{\bp}{\begin{prop}}
\newcommand{\ep}{\end{prop}}
\newtheorem{defn}[thm]{Definition}
\newcommand{\bd}{\begin{defn}}    
\newcommand{\ed}{\end{defn}}
\newtheorem{rmrk}[thm]{Remark}   
\newcommand{\br}{\begin{rmrk}}
\newcommand{\er}{\end{rmrk}}
\newcommand{\GHto}{\stackrel { \textrm{GH}}{\longrightarrow} }
\newcommand{\Fto}{\stackrel {\mathcal{F}}{\longrightarrow} }
\newcommand{\be}{\begin{equation}}
\newcommand{\ee}{\end{equation}}
\newcommand{\N}{\mathbb{N}}
\newcommand{\diam}{\operatorname{Diam}}
\newcommand{\set}{\rm{set}}
\newcommand{\mass}{{\mathbf M}}
\begin{document}

\title[Contrasting Notions of Convergence]
{Contrasting Various Notions of Convergence in Geometric Analysis}

\author{Brian Allen}
\address{University of Hartford}
\email{brianallenmath@gmail.com}

\author{Christina Sormani}
\thanks{C. Sormani was partially supported by NSF DMS 1612049.}
\address{CUNY Graduate Center and Lehman College}
\email{sormanic@gmail.com}


\keywords{}



\begin{abstract}
We explore the distinctions between $L^p$ convergence of metric tensors 
on a fixed Riemannian manifold versus Gromov-Hausdorff, uniform, and intrinsic flat convergence
of the corresponding sequence of metric spaces.   We provide a number of
examples which demonstrate these notions of convergence do not agree even for
two dimensional warped product manifolds with warping functions converging in 
the $L^p$ sense.  We then prove a theorem which requires $L^p$ bounds from above
and $C^0$ bounds from below on the warping functions to obtain enough control for
all these limits to agree.
\end{abstract}

\maketitle

\section{Introduction}\label{Intro}

When mathematicians have studied sequences of Riemannian manifolds arising naturally in questions of almost rigidity or when searching for solutions to geometric partial differential equations, they have obtained bounds on the metric tensors of these Riemannian manifolds.  When the bounds they obtained on $(M^n,g_j)$ guaranteed a subsequence, $g_j \to g_\infty$ converging in the $C^0$ sense or stronger, then the
Riemannian manifolds, $(M,g_j)$, viewed as metric spaces, $(M,d_j)$, converge uniformly to $(M,d_\infty)$ where $d_\infty$ is defined as the infimum of the lengths of curves between points measured using $g_\infty$.  After observing this in \cite{Gromov-metric}, Gromov introduced the Gromov-Hausdorff distance between metric spaces, proving that uniform convergence implies Gromov-Hausdorff convergence of metric spaces.  The advantage of Gromov-Hausdorff convergence is that one may allow the spaces themselves to change $(M_j, d_j)$ and one may obtain a limit metric space which is not even a manifold.  Gromov proved that if $(M_j,g_j)$ have uniform lower bounds on Ricci curvature and uniform upper bounds on diameter than a subsequence converges in the Gromov-Hausdorff sense to a metric space in \cite{Gromov-metric} and since then many people have analyzed the properties of these limit spaces.

More recently the second author and Wenger introduced the intrinsic flat distance between oriented Riemannian manifolds which need not be diffeomorphic \cite{SW-JDG}.  Roughly the intrinsic flat distance is measuring a filling volume between two manifolds.  A standard sphere and a sphere with a thin deep well are very close in the intrinsic flat sense based on the filling volume of the well, while they are far apart in the Gromov-Hausdorff distance based on the depth of the well.   As soon as this notion was introduced people began asking whether $L^p$ convergence of the metric tensors might in some way be related to intrinsic
flat convergence of the metric spaces.   After all, a uniform $L^n$ bound on metric tensors implies a uniform upper bound on volume.   Wenger proved that as long as a sequence of oriented Riemannian manifolds has a uniform upper bound on volume and on diameter it has a subsequence converging in the intrinsic flat sense in \cite{Wenger-compactness}.  However it is not known whether the limit space is in anyway related to $(M, g_\infty)$ even when $g_\infty$ was smooth.   In joint work with Lakzian \cite{Lakzian-Sormani-1}, and work 
of Lakzian alone \cite{Lakzian-diameter} it was shown that even when $g_j \to g_\infty$ smoothly away from a singular set, the Gromov-Hausdorff and Intrinsic Flat limits need not be closely related to $(M, g_\infty)$ unless one controls volumes, areas, and distances near the singular set.

In this paper we provide a number of examples demonstrating that when metric tensors $g_j$ converge in the $L^p$ sense to a metric tensor $g_\infty$, then uniform, intrinsic flat and Gromov-Hausdorff limits need not converge to a metric space which is defined by $g_\infty$
using the infimum of lengths over all curves.  Our examples include very simple
two dimensional warped product Riemannian manifolds whose metric tensors are of the form $
dr^2 + f_j(r)^2 d\theta^2.   
$

In Example~\ref{Cinched-Torus} we find a sequence of warping functions $f_j(r)$
which converge in the $L^p$ sense to a constant function, $f_\infty$, but the uniform, Gromov-Hausdorff, and Intrinsic flat limit of the sequence is not even a Riemannian manifold.
In this example the $f_j\le f_\infty$ but have an increasingly narrow dip downward about $r=0$ so we say the sequence of manifolds is ``cinched'' at $0$.  This is an example with smooth convergence away from a singular set that was not seen in 
\cite{Lakzian-Sormani-1}.  The limit metric space is
described in detail within the example and a proof is given afterwards.    
In Example~\ref{MovingCinched} the $f_j \le f_\infty$
and $L^p$ converge to $f_\infty$ again, but the cinch moves around so that the $f_j$ do not converge pointwise almost everywhere.  This example has no uniform, Gromov-Hausdorff, or Intrinsic Flat limit unless one takes a subsequence where the cinch's location converges.

In Examples~\ref{SingleRidge}-~\ref{ManyRidges} we also consider warping functions, $f_j$, that $L^p$ converge to a constant function, $f_\infty$,  but now $f_j \ge f_\infty$.  In Example~\ref{SingleRidge} we have a single increasingly narrow peak about $r=0$.  We say there is a ``ridge'' at $0$.   This is another example with smooth convergence away from a singular set that was not studied in \cite{Lakzian-Sormani-1}.  We observe how the shortest paths between points on the ridge, do not lie on the ridge in Lemma~\ref{AvoidPeak2}.   In Example~\ref{MovingRidges} we have a sequence of manifolds with moving ridges, so there is no pointwise convergence almost everywhere.  In Example~\ref{ManyRidges} we have increasingly many increasingly dense ridges.  In all three of these examples we prove 
uniform convergence of the distances, $d_j$, to $d_\infty$ of the isometric product Riemannian manifold with
metric tensor $g_\infty= dr^2 + f_j(r)^2 d\theta^2$.  We obtain intrinsic flat and Gromov-Hausdorff convergence to this limit as well. 

In Example~\ref{to-R-ET} we have $f_j \ge f_\infty$ with $f_\infty$ constant and $f_j = f_\infty$
on an increasingly dense set.  However, now our $f_j$ do not converge in $L^p$ to $f_{\infty}$.  For the
particular sequence we chose, we obtain uniform, intrinsic flat and Gromov-Hausdorff convergence to a nonRiemannian Finsler manifold we call a minimized R-stretched Euclidean taxi metric
space.  This metric is defined as an infimum over an interpolation
between a Euclidean metric stretched by $R$ in one direction and a taxi metric.
Our example demonstrates that the $L^p$ convergence was crucial in the prior examples. As discussed in Remark \ref{RemScalarCompactness}, this example shows the necessity of scalar curvature bounds in the statement of the scalar compactness conjecture of Gromov-Sormani \cite{IAS} to conclude that the limit has Euclidean tangent cones almost everywhere. This conjecture was recently verified in the rotationally symmetric case by Park-Tian-Wang \cite{PTW}.

We then prove the following general theorem concerning warped product manifolds $M^n=[r_0,r_1]\times_f \Sigma$ where $\Sigma$ is an $n-1$ dimensional manifold including also $M$ without boundary that have $f$ periodic with period $r_1-r_0$ as in {\ref{MorN}):

\begin{thm}\label{WarpConv} 
  Assume the warping factors, $f_j\in C^0(r_0,r_1)$ , satisfy the following:
  \be
 0<f_{\infty}(r)-\frac{1}{j} \le f_j(r) \le K < \infty 
  \ee
and
  \be
  f_j(r) \rightarrow f_{\infty}(r)> 0 \textrm{ in }L^2
  \ee
  where $f_\infty \in C^0(r_0,r_1)$.
  
  Then we have GH and $\mathcal{F}$ convergence of the 
  warped product manifolds,
  \begin{align}
  M_j=[r_0,r_1]\times_{f_j} \Sigma &\to   M_\infty=[r_0,r_1]\times_{f_\infty} \Sigma,
  \\N_j=\mathbb{S}^1\times_{f_j} \Sigma &\to   N_\infty=\mathbb{S}^1\times_{f_\infty} \Sigma,
  \end{align}
  and uniform convergence of their distance functions, $d_ j \to d_\infty$.
 \end{thm}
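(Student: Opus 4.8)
The plan is to establish uniform convergence of the distance functions $d_j\to d_\infty$ first, and then to read off the Gromov--Hausdorff and intrinsic flat convergence from it. For GH convergence this is immediate: uniform convergence of $d_j$ on the fixed compact manifold makes the identity maps $(\,\cdot\,,d_\infty)\to(\,\cdot\,,d_j)$ into $\vare_j$-isometries with $\vare_j\to0$. For $\Fm$-convergence we also use the volume control $\vol(M_j)=\vol(\Sigma)\int_{r_0}^{r_1}f_j^{\,n-1}$, which is bounded by $K^{n-1}|r_1-r_0|\vol(\Sigma)$ and converges to $\vol(M_\infty)$ (since $0<f_j\le K$ and $f_j\to f_\infty$ in $L^2$, hence $f_j^{\,n-1}\to f_\infty^{\,n-1}$ in $L^1$); uniform/GH convergence together with this volume control yields intrinsic flat convergence by the known comparison of these notions. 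So the whole statement reduces to uniform convergence of $d_j$ on $M_\infty=[r_0,r_1]\times_{f_\infty}\Sigma$, the closed case $N_j\to N_\infty$ being identical with the radial coordinate on $\Sp^1$. The lower bound is easy: since $f_j\ge f_\infty-\tfrac1j>0$ and $c_0:=\inf f_\infty>0$ (as the hypothesis forces), the tensor $g_j=dr^2+f_j^2 g_\Sigma$ dominates $h_j:=dr^2+(f_\infty-\tfrac1j)^2 g_\Sigma$, and $h_j\to g_\infty$ uniformly as tensors, so $d_j\ge d_{h_j}\ge(1-\vare_j)d_\infty$ with $\vare_j\to0$.

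The upper bound $d_j\le d_\infty+o(1)$ is the crux and is where the $L^2$ hypothesis enters. Fix $\vare>0$ and set $B_j:=\{r:f_j(r)>f_\infty(r)+\vare\}$, an open set with $|B_j|\le\vare^{-2}\|f_j-f_\infty\|_{L^2}^2\to0$; off $B_j$ one has $f_j\le f_\infty+\vare$, while everywhere $f_j\le K$. Given $p,q$, let $\gamma$ be a minimizing $g_\infty$-geodesic of length $L=d_\infty(p,q)\le D_0:=\diam M_\infty$. First build a controlled test path $P$: cut $\gamma$ into $N\le D_0/\mu+1$ subarcs of length $\le\mu$, and replace each subarc from $(c_{i-1},x_{i-1})$ to $(c_i,x_i)$ by the ``diagonal'' path interpolating linearly in $r$ from $c_{i-1}$ to $c_i$ while tracing a minimizing $\Sigma$-geodesic from $x_{i-1}$ to $x_i$. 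Bounding each original subarc length from below by Minkowski's inequality and each diagonal from above using the modulus of continuity $\omega$ of $f_\infty$, one checks $\length_{g_\infty}(P)\le L+\omega(\mu)V$, where $V\le D_0/c_0$ bounds the total $\Sigma$-length of any $g_\infty$-geodesic; crucially, the radial coordinate of $P$ is piecewise linear with at most $N$ monotone pieces.

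Now assume $j$ is large enough that $|B_j|<\mu$ and reroute $P$ off $B_j\times\Sigma$: on each maximal subarc of $P$ contained in a slab $(\alpha,\beta)\times\Sigma$ (a component of $B_j\times\Sigma$, so $\alpha,\beta\notin B_j$), replace it by a $\Sigma$-move along the slice $\{\alpha\}\times\Sigma$ followed, when the subarc traversed the slab, by a radial segment from $\alpha$ to $\beta$. This produces a $g_j$-rectifiable path $\tilde P_j$ from $p$ to $q$. Since radial motion costs the same in $g_j$ and $g_\infty$, since $f_j(\alpha)\le f_\infty(\alpha)+\vare$, and since $\min_{[\alpha,\beta]}f_\infty\ge f_\infty(\alpha)-\omega(|B_j|)$, the excess length added on each rerouted subarc is at most $C(K,c_0)\,|(\alpha,\beta)|+(\omega(|B_j|)+\vare)\cdot(\text{its }\Sigma\text{-length})$; here $f_j\le K$ enters only through bounding the radial detour by the slab width, and the elementary inequality $a+b-\sqrt{a^2+b^2}\le\min(a,b)$ is used to charge that detour against the length $P$ has already spent crossing the slab. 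Summing, each slab is re-crossed at most $N$ times (once per monotone piece of the radial coordinate of $P$), so the total slab-width contribution is $\le C(K,c_0)\,N\,|B_j|$ and the total $\Sigma$-length contribution is $\le(\omega(|B_j|)+\vare)V$. Hence
\be
  d_j(p,q)\ \le\ \length_{g_j}(\tilde P_j)\ \le\ d_\infty(p,q)+\omega(\mu)V+C(K,c_0)\,N\,|B_j|+(\omega(|B_j|)+\vare)V .
\ee
Choosing $\mu$ and $\vare$ small, and then $j$ so large that $N|B_j|$ is negligible (legitimate since $N=N(\mu)$ is fixed once $\mu$ is), the error is $<\eta$, uniformly in $p,q$; together with the lower bound this gives $d_j\to d_\infty$ uniformly.

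The step I expect to be the main obstacle is exactly this bookkeeping in the upper bound. If one reroutes $\gamma$ itself rather than a controlled polygonal approximation, a geodesic that ``lingers'' near a radius where $f_j$ has a tall thin spike — a ridge sitting over a valley of $f_\infty$ — can be forced to cross the same thin slab arbitrarily many times, so the detour costs need not sum to $o(1)$. Passing to a test path whose radial coordinate is monotone on boundedly many pieces, and isolating the slab-width term via $a+b-\sqrt{a^2+b^2}\le\min(a,b)$ so that it is dominated by $N|B_j|\to0$, is what makes the estimate close. The roles of the two hypotheses are then transparent: $f_j\le K$ keeps each thin-slab detour from costing more than $O(\text{slab width})$, while $f_j\to f_\infty$ in $L^2$ is precisely what makes the bad set $B_j$ thin.
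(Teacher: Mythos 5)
Your route to the distance estimates is genuinely different from the paper's and, for that part, essentially sound. The paper proves only \emph{pointwise} convergence $d_j(p,q)\to d_\infty(p,q)$: the lower bound comes from a computation along unit-speed $g_j$-geodesics (Lemma~\ref{distLowerBound}), and the upper bound from decomposing the actual $g_\infty$-geodesic into possibly infinitely many monotone-in-$r$ and constant-in-$r$ pieces, estimating the monotone pieces via H\"older's inequality (Lemmas~\ref{warp-L} and~\ref{Theta(C_j)-WEst}), rerouting the constant-$r$ pieces near radii where $f_j\approx f_\infty$ (Lemma~\ref{ConstRLengthBound}), and handling the infinite decomposition in Proposition~\ref{prop-limsup}; uniform convergence is then obtained a posteriori from the compactness in Theorem~\ref{HLS-thm} together with uniqueness of the pointwise limit. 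You instead prove a two-sided \emph{uniform} estimate directly: the lower bound by comparing $g_j$ with $dr^2+(f_\infty-\tfrac1j)^2\sigma$, and the upper bound by replacing the $g_\infty$-geodesic by a polygonal ``diagonal'' path with a controlled number $N(\mu)$ of radially monotone pieces (using the modulus of continuity of $f_\infty$) and rerouting it off the Chebyshev bad set $B_j=\{f_j>f_\infty+\vare\}$, charging the radial detours against $N|B_j|$. This buys explicit uniform rates and avoids both the infinite-decomposition argument and the subsequence-plus-uniqueness step; the paper's version avoids your rerouting bookkeeping and its glossed boundary cases (e.g.\ $p$ or $q$ lying over a component of $B_j$, or infinitely many components of $B_j$ --- both harmless, since the extra cost is controlled by $|B_j|$).

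The one genuine gap is the intrinsic flat step. The claim that ``uniform/GH convergence together with volume convergence yields $\mathcal{F}$-convergence by the known comparison of these notions'' is not a citable theorem: Gromov--Hausdorff convergence plus convergence of total volumes does not in general identify the intrinsic flat limit --- when these limits agree is exactly the delicate issue this circle of papers addresses, and the results in that direction (e.g.\ volume-above-distance-below type theorems) require hypotheses such as $d_j\ge d_\infty$, not mere GH closeness. Fortunately the repair is immediate from what you have already established: since $0<\tfrac{c_0}{2}\le f_j\le K$ for $j$ large (with $c_0=\min f_\infty>0$), Lemma~\ref{biLip} gives the uniform biLipschitz bounds (\ref{d_j}) relative to the fixed background metric, so Theorem~\ref{HLS-thm} applies; a subsequence then converges uniformly, in GH, and in $\mathcal{F}$ to a common limit, which your uniform convergence identifies as $d_\infty$, and since every subsequence has this property the full sequence converges --- indeed the quantitative estimates (\ref{GHj}) and (\ref{Fj}) convert your $\epsilon_j=\sup|d_j-d_\infty|\to0$ directly into $d_{GH}\to0$ and $d_{\mathcal{F}}\to0$. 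No volume argument is needed. With that substitution your proof is complete.
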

 
\begin{rmrk} 
In our theorem we assume $L^2$ convergence but since we are assuming that the $f_j$ are uniformly bounded this is equivalent to $L^p$, $p \in [1,\infty)$ convergence.
\end{rmrk}
 
 The proof of this theorem and indeed the proof of all the examples relies on a theorem of the second author with Huang and Lee in the appendix of \cite{HLS} which is reviewed in the background section of this paper.  The theorem in \cite{HLS} states that if one has uniform upper and lower bounds on the $d_j$, a subsequence of the Riemannian manifolds converges in the uniform, Gromov-Hausdorff, and intrinsic flat convergence sense to some common limit space.  Thus we need only prove pointwise convergence of the original sequence of $d_j$ to our proposed $d_\infty$.   The method applied to control $d_j$ is different in each proof in this paper.  For the theorem, we apply the $C^0$ lower bound to bound $d_j$ from below and the $L^p$ upper bound is all that is needed to bound $d_j$ from above pointwise.  Note that the hypothesis of the theorem immediately implies a
 uniform upper bound on diameter [Lemma~\ref{lem-diam}].
We end the paper with Theorem~\ref{WarpConv2} concerning warped product manifolds where the warping function depends on two variables.   

Applications of these theorems will appear in a paper by the first author with
Hernandez, Parise, Payne, and Wang on a conjecture of Gromov
concerning the Almost Rigidity of the Scalar Torus Theorem \cite{AHMPPW1}.
The first author hopes to apply the techniques developed here in combination with
his prior work in \cite{Allen-Inverse} and \cite{Allen-Stability} to prove a special case
of Lee and the second author's conjecture on Almost Rigidity of the Positive Mass 
Theorem as stated in \cite{LeeSormani1}.   
Additional applications to conjectures involving scalar curvature that were raised by the second author at the Fields Institute and described in \cite{Sormani-scalar} will be explored with other teams of students and postdocs in the near future.  Anyone interested in joining one of these teams should contact the second author.

\vspace{.2in}
\noindent{\bf Acknowledgements:}   The authors would like to thank the Fields Institute and particularly Spyros Alexakis (University of Toronto), Walter Craig (McMaster University),
Robert Haslhofer (University of Toronto),
Spiro Karigiannis (University of Waterloo), Aaron Naber (Northwestern University),
McKenzie Wang (McMaster University) for organizing the Thematic Program and the  Summer School on Geometric Analysis there.   It provided a wonderful place for the two of us to work and meet with new people.  We'd like to thank  Christian Ketterer, Chen-Yun Lin, and Raquel Perales for serving as TAs to the students attending the second author's series of talks there.   Brian Allen would like to thank the United States Military Academy Department of Mathematics for funding his trip to join this team.  
Much of the work in this paper resulted from discussions there as to what was needed to complete the projects the teams were working on.  We wrote this paper to serve as a tool that could be applied by those teams as they meet again in the future.  All graphics in this paper were drawn by Penelope Chang of Hunter College High School, NYC.

\section{Review}

In this subsection we review what we mean by a warped product space even
with a noncontinuous warping function and what one needs to know about
Gromov-Hausdorff and Intrinsic Flat convergence to prove all examples and
theorems in this paper.   The reader does not need any prior knowledge of
these two notions of convergence.   Readers who are experts in these 
notions of convergence are recommended to read just the first and last subsections
of this review section of the paper, particularly Theorem~\ref{HLS-thm}
which combines results of Gromov in \cite{Gromov-metric} and the second author
with Huang and Lee in \cite{HLS}.   All examples and theorems in this paper
apply that theorem to prove convergence.

\subsection{Warped Product Spaces} \label{sect-warp} 

Let $(\Sigma^{n-1},\sigma)$ be a compact Riemannian manifold and 
\be
f:[r_1,r_2]\to \mathbb{R}^+
\ee
and define the warped product manifolds 
\be \label{MorN}
M=[r_1,r_2]\times_f \Sigma \textrm{ and } N={\mathbb S}^1\times_f \Sigma
\ee
with warped product metrics defined by
\be \label{eqn-g}
g= dr^2 + f^2(r) \sigma 
\ee 
where either $r\in [r_1,r_2]$ or $r\in {\mathbb S}^1$.   On such a manifold
we define lengths of curves to be
\be\label{eqn-L}
L_g(C) =\int_0^1 g(C'(t), C'(t))^{1/2} \, dt = \int_0^1 \sqrt{|r'(t)|^2 + |f(r(t))|^2|\theta'(t)|^2}\, dt
\ee
which is well defined even when $f$ is only $L^1$.  We then
define distances $d_g^M(p,q)$ and $d_g^N(p,q)$ on $M$ and $N$ respectively as
\be\label{eqn-d}
d_g(p,q) =\inf \{ L_g(C):\, C(0)=p, \, C(1)=q\}
\ee
where the value is different on $M$ and $N$ because the selection
of curves between points within these two spaces are different.

\begin{rmrk}\label{r-limit-geods}
Note that we do not need $f$ to be smooth or even continuous to define a warped product metric space.  As long as the function is bounded above,
we can define lengths using (\ref{eqn-L}).  Following the text of Burago-Burago-Ivanov\cite{BBI}, the distance $d$ defined by (\ref{eqn-d}) is symmetric and satisfies the triangle inequality.  It is positive definite as long as $f$ is bounded below by a positive number.  
Such a metric space is then compact and there are geodesics whose lengths achieve
the infimum in (\ref{eqn-d}).   
Even more general warped products of metric spaces
are explored by Alexander and Bishop in \cite{AB-warp}.  
\end{rmrk}

\begin{rmrk}\label{s-limit-geods} Throughout this paper we will assume that our
warping function $f$ is continuous.   Annegret Burtscher has proven that if a Riemannian manifold has a continuous metric tensor then the length of absolutely continuous curves defined by \eqref{eqn-L} is equivalent to the induced length defined by $d_{g}$ (See Definition 2.1, Proposition 4.1, and Theorem 4.11 of \cite{Burtscher-CtsRiemMetrics}). Hence if one considers $C_j(t)$ to be a sequence of absolutely continuous curves connecting $p,q \in M$ parameterized to be unit speed on $t \in [0,1]$ and so that
\begin{align}
L_g(C_j) \rightarrow d_g(p,q),
\end{align}
we can show that the distance is achieved by an absolutely continuous curve. First we can apply Theorem 2.5.14 of \cite{BBI} to conclude that since $L_g(C_j) \le L$ is uniformly bounded there exists a uniformly converging subsequence (where we just replace the original sequence with the subsequence) which converges to a curve of finite induced length $C_{\infty}$ so that $L_{d_g}(C_{\infty}) = d_g(p,q)$. We want to show that this curve is absolutely continuous so that $L_g(C_{\infty}) = d_g(p,q)$. To this end one notices that
\begin{align}
L_g(C_j) = \int_0^1 |C'_j(t)|_g dt < L
\end{align} 
and hence $|C'_j(t)|_g$ is a uniformly bounded family of $L^1$ functions.

By the constant speed parameterization we know
\begin{align}
d(C_j(a),C_j(b)) &\le L_g(C_j|_{[a,b]}) 
\\&= \int_a^b |C_j'(t)|_g dt \le C |b-a|, \quad 0 \le a < b \le 1,\label{equiintegrable}
\end{align}
which implies that $|C_j'(t)|_g$ is an equiintegrable sequence and hence there exists a $l_{\infty} \in L^1([0,1])$ so that  $|C_j'(t)|_g \rightharpoonup l_{\infty}$ in $L^1$.

 By considering the characteristic functions $\chi_{[a,b]}$ this implies
\begin{align}
\int_a^b |C'_j(t)|_g dt \rightarrow \int_a^b l_{\infty} dt, \quad 0 \le a < b \le 1.\label{WeakL1Convergence}
\end{align} 
By combining \eqref{equiintegrable} and \eqref{WeakL1Convergence} we find
\begin{align}
d(C_{\infty}(a),C_{\infty}(b)) \le \int_a^b l_{\infty} dt, \quad 0 \le a < b \le 1,
\end{align}
which is the definition of measure absolute continuity of a curve (See Definition 3.17 of \cite{Burtscher-CtsRiemMetrics}). Since this notion of absolute continuity agrees with the metric notion of absolute continuity (See Proposition 3.18 of \cite{Burtscher-CtsRiemMetrics}) we have shown that $C_{\infty}$ is an absolutely continuous curve which realizes the distance between $p$ and $q$.

The fact that the distance between points on a continuous Riemannian manifold is acieved by the length of an absolutely continuous curve will be important for us because we will repeatedly use the fact that the distance between points of $M$ can be achieved by an absolutely continuous curve $C(t)$ and hence we can reparameterize $C(t)$ so that $|C'(t)|_{g}=1$ almost everywhere.
\end{rmrk}

For warped products we can show that $L^2$ convergence of metrics $g_j \rightarrow g_{\infty}$ is equivalent to $L^2$ convergence of the warping functions $f_j \rightarrow f_{\infty}$. For this we fix the background metric $\delta = dr^2 + \sigma$ and an orthonormal basis for this metric $\{\partial_r,\partial_{\theta_1},...,\partial_{\theta_n}\}$ and compute
\begin{align}
\int_{M} |g_j-g_{\infty}|_{\delta}^2 dm &=  \int_{M}\sum_{i=1}^n |f_j-f_{\infty}|^2 \sigma(\partial_{\theta_i},\partial_{\theta_i})dm
\\&=n\int_{r_1}^{r_2}\int_{\Sigma} |f_j-f_{\infty}|^2 d\mu dr = n|\Sigma| \int_{r_1}^{r_2} |f_j-f_{\infty}|^2 dr,
\end{align}
where $dm$ is the measure on $M$ induced by $\delta$, $d\mu$ is the measure on $\Sigma$ from $\sigma$ and $|\Sigma|$ is $n$-dimensional volume of $\Sigma$. This shows that we can just work with $L^2$ convergence of the warping functions for the sake of this paper.

\subsection{Gromov-Hausdorff Convergence}

Gromov-Hausdorff convergence was introduced by Gromov in \cite{Gromov-metric}
See also the text of Burago-Burago-Ivanov\cite{BBI}.   It measures a distance
between metric spaces.   It is an intrinsic version of the Hausdorff distance between
sets in a common metric space $Z$:
\be 
d_H^Z(A_1, A_2) = \inf\{ r \, : \, A_1\subset T_r(A_2) \textrm{ and } A_2\subset T_r(A_1)\}
\ee 
where $T_r(A)=\{x \in Z: \, \exists a \in A \, s.t.\, d_Z(x,a)<r\}$.
Since an arbitrary given pair of compact metric spaces, $(X_i,d_i)$ might
not lie in the same compact metric space, we use distance preserving maps:
\be
\varphi_i: X_i \to Z \textrm{ such that } d_Z(\varphi_i(p), \varphi_i(q)) = d_i(p,q) \,\, \forall p,q \in X_i
\ee
to map them into a common compact metric space, $Z$.  

The Gromov-Hausdorff distance between two compact metric spaces, $(X_i,d_i)$,
is then defined to be
\be
d_{GH}((X_1,d_1),(X_2,d_2))=\inf\{ d_H^Z(\varphi_1(X_1), \varphi_2(X_2)) \, : \, \varphi_i: X_i\to Z\}
\ee
where the infimum is taken over all compact metric spaces $Z$ and all
distance preserving maps, $\varphi_i: X_i \to Z$.  

\subsection{Warped products as Integral Current Spaces}

Intrinsic flat convergence is defined for sequences of integral current spaces
by the second author jointly with Wenger in \cite{SW-JDG}.
An integral current space is a 
metric space, $(X,d)$, endowed with a current structure, $T$,
where $T$ is defined by a collection of biLipschitz charts with weights.
If we start with an oriented smooth Riemannian manifold, $M$, then $(X,d)$
is the standard metric space defined by $M$ using lengths of curves
as in (\ref{eqn-L}) and $T$ is defined by the orientation of $M$,
\be \label{currentT}
T(f, \pi_1,...,\pi_m) =  \int_{M} f \, d\pi_1 \wedge \cdots \wedge d\pi_m.
\ee

Here we are considering warped product spaces, $M$ and $N$, as in
(\ref{MorN}) allowing our function, $f:[r_1,r_2]\to \mathbb{R}^+$, to simply
have a maximum and a positive minimum and do not require it to be smooth.
In order to confirm that we still may use (\ref{currentT}) to define the integral
current structure on our space, we need only verify that our standard
oriented charts on the isometric product manifold are biLipschitz to the
metric $d$ we obtain as in (\ref{eqn-L})-(\ref{eqn-d}).   This is confirmed
by showing the identity map between the isometric product manifold,
$M_1=[r_1,r_2]\times_1 \Sigma$, and our warped product space,
$M=[r_1,r_2]\times_f \Sigma$, is biLipschitz:
 
\begin{lem} \label{biLip}
Suppose the warping function is bounded
\be
f(r) \in [a,b] \qquad \forall r\in [r_1,r_2],
\ee
then the identity map 
\be
F: M_1=[r_1,r_2]\times_1 \Sigma \,\,\,\to \,\,\,M=[r_1,r_2]\times_f \Sigma
\ee
 is
biLipschitz 
\be
0<\min \{a, 1\} \le \frac{d_M(F(p), F(q))}{d_{M_1}(p,q)}  \le (\max \{1,b\}).
\ee 
\end{lem}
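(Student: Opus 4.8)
The plan is to compare lengths of curves directly. Fix a curve $C(t) = (r(t), \theta(t))$ in the common underlying space $[r_1,r_2]\times\Sigma$. Its length with respect to the warped metric $g = dr^2 + f^2(r)\sigma$ is $L_g(C) = \int_0^1 \sqrt{|r'(t)|^2 + f(r(t))^2|\theta'(t)|_\sigma^2}\,dt$, while its length with respect to the isometric product metric $g_1 = dr^2 + \sigma$ is $L_{g_1}(C) = \int_0^1 \sqrt{|r'(t)|^2 + |\theta'(t)|_\sigma^2}\,dt$. Since $a \le f(r) \le b$ for all $r$, at the level of the integrand we have the pointwise bounds
\begin{equation}
\min\{a,1\}\sqrt{|r'|^2 + |\theta'|_\sigma^2} \;\le\; \sqrt{|r'|^2 + f(r)^2|\theta'|_\sigma^2} \;\le\; \max\{1,b\}\sqrt{|r'|^2 + |\theta'|_\sigma^2},
\end{equation}
because replacing the coefficient $f(r)^2$ of the nonnegative quantity $|\theta'|_\sigma^2$ by something between $\min\{a,1\}^2$ and $\max\{1,b\}^2$ only changes the sum by a factor in that range. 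Integrating in $t$ gives
\begin{equation}
\min\{a,1\}\, L_{g_1}(C) \;\le\; L_g(C) \;\le\; \max\{1,b\}\, L_{g_1}(C)
\end{equation}
for every (absolutely continuous) curve $C$ with the same endpoints in both spaces.

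Next I would pass from curves to distances. For the upper bound: given $p,q$, take any curve $C$ from $p$ to $q$; then $d_M(F(p),F(q)) \le L_g(C) \le \max\{1,b\}\,L_{g_1}(C)$, and taking the infimum over $C$ yields $d_M(F(p),F(q)) \le \max\{1,b\}\, d_{M_1}(p,q)$. For the lower bound, one must be slightly careful that the infimum defining $d_M$ is actually over curves that also exist in $M_1$; since $F$ is the identity on the underlying space and both metrics are defined by the same family of (absolutely continuous) curves — this is exactly the content of Remark~\ref{r-limit-geods} and Remark~\ref{s-limit-geods} — any curve realizing, or nearly realizing, $d_M(F(p),F(q))$ is also a competitor for $d_{M_1}(p,q)$, so $d_{M_1}(p,q) \le L_{g_1}(C) \le \min\{a,1\}^{-1} L_g(C)$, and taking an infimizing sequence of curves $C$ for $d_M$ gives $d_{M_1}(p,q) \le \min\{a,1\}^{-1} d_M(F(p),F(q))$, i.e. $\min\{a,1\}\, d_{M_1}(p,q) \le d_M(F(p),F(q))$. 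Dividing through by $d_{M_1}(p,q)$ (which is positive and finite since $M_1$ is a connected compact manifold) gives the claimed inequality. Positivity of the constant $\min\{a,1\}$ is immediate from $a>0$.

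I do not expect any serious obstacle here — the statement is essentially a bookkeeping lemma. The only point requiring a word of care is making sure the two distance functions are infima over the \emph{same} set of admissible curves so that the curve-level inequality descends to a distance-level inequality in both directions; this is precisely why the paper records Remarks~\ref{r-limit-geods} and~\ref{s-limit-geods} about absolutely continuous curves realizing (or approximating) the distance. A secondary minor point is that $F$ being the identity map on points means ``same endpoints'' is literally true, so no transport of curves between the two spaces is needed. Everything else is the elementary inequality $\min\{a,1\}^2(x+y) \le x + c^2 y \le \max\{1,b\}^2(x+y)$ for $x,y\ge 0$ and $c\in[a,b]$, applied under the square root and integrated.
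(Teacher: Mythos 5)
Your proposal is correct and follows essentially the same route as the paper: the pointwise bound $\min\{a,1\}^2(|r'|^2+|\theta'|^2) \le |r'|^2+f(r)^2|\theta'|^2 \le \max\{1,b\}^2(|r'|^2+|\theta'|^2)$ integrated over curves, then passing to the infimum in both directions. Your extra remark that both distances are infima over the same family of curves (since $F$ is the identity) is a fine point the paper leaves implicit, but the argument is the same.
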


\begin{proof}
This can be seen by observing that
\begin{eqnarray}
L_g(C) &=& \int_0^1 \sqrt{|r'(t)|^2 + |f(r(t))|^2|\theta'(t)|^2}\, dt  \\
&\le & (\max \{1,b\}) \int_0^1 \sqrt{|r'(t)|^2 + |\theta'(t)|^2}\, dt \\
&\le& (\max \{1,b\}) L_{g_1}(C).
\end{eqnarray}
Thus
\be
d_M(F(p), F(q)) \le (\max \{1,b\}) \,d_{M_1}(p,q)
\ee 
For the other direction we have
\begin{eqnarray}
L_{g_1}(C) &=& \int_0^1 \sqrt{|r'(t)|^2 +  |\theta'(t)|^2}\, dt  \\
&\le & (\min \{a, 1\})^{-1} \int_0^1 \sqrt{|r'(t)|^2 + |f(r(t))|^2 |\theta'(t)|^2}\, dt\\
& \le& 
(\min \{a, 1\})^{-1} L_g(C).
\end{eqnarray}
Thus
\be
d_{M_1}(p,q)\le (\min \{a, 1\})^{-1}
d_M(F(p), F(q)). 
\ee 
So we have our claim.
\end{proof}

\subsection{Key Theorem we apply to prove GH and $\mathcal{F}$ convergence}

The following theorem was proven by the second author jointly with 
Huang and Lee in \cite{HLS}
building upon earlier work of Gromov in \cite{Gromov-metric}.  
This theorem allows us to prove GH and intrinsic flat 
convergence using only information about the sequence of distance functions.
Note that it is a compactness theorem, providing the existence of a converging
subsequence once one simply has uniform biLipschitz control on the 
metrics.  The convergence is not biLipschitz convergence but instead it
is uniform convergence of the distance functions and also GH and $\mathcal{F}$
convergence of the spaces.  

\begin{thm}\label{HLS-thm}
Fix a precompact $n$-dimensional integral current space $(X, d_0, T)$
without boundary (e.g. $\partial T=0$) and fix
$\lambda>0$.   Suppose that
$d_j$ are metrics on $X$ such that
\be\label{d_j}
\lambda \ge \frac{d_j(p,q)}{d_0(p,q)} \ge \frac{1}{\lambda}.
\ee
Then there exists a subsequence, also denoted $d_j$,
and a length metric $d_\infty$ satisfying (\ref{d_j}) such that
$d_j$ converges uniformly to $d_\infty$
\be\label{epsj}
\epsilon_j= \sup\left\{|d_j(p,q)-d_\infty(p,q)|:\,\, p,q\in X\right\} \to 0.
\ee 
Furthermore
\be\label{GHjlim}
\lim_{j\to \infty} d_{GH}\left((X, d_j), (X, d_\infty)\right) =0
\ee
and
\be\label{Fjlim}
\lim_{j\to \infty} d_{\mathcal{F}}\left((X, d_j,T), (X, d_\infty,T)\right) =0.
\ee
In particular, $(X, d_\infty, T)$ is an integral current space
and $\set(T)=X$ so there are no disappearing sequences of
points $x_j\in (X, d_j)$.

In fact we have
\be\label{GHj}
d_{GH}\left((X, d_j), (X, d_\infty)\right) \le 2\epsilon_j
\ee
and 
\be\label{Fj}
d_{\mathcal{F}}\left((X, d_j, T), (X, d_\infty, T)\right) \le
2^{(n+1)/2} \lambda^{n+1} 2\epsilon_j \mass_{(X,d_0)}(T).
\ee
\end{thm}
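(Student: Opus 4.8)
The plan is to produce the limit metric $d_\infty$ by a compactness argument, to establish the two quantitative bounds (\ref{GHj}) and (\ref{Fj}) directly (so that (\ref{GHjlim}) and (\ref{Fjlim}) follow by letting $j\to\infty$), and to deduce the remaining structural assertions from biLipschitz invariance.

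\emph{Step 1 (extracting $d_\infty$).} Viewing each $d_j$ as a function on $X\times X$, the hypothesis (\ref{d_j}) gives $d_j\le\lambda\,d_0$, so $\{d_j\}$ is uniformly bounded by $\lambda\,\diam_{d_0}(X)<\infty$, while the triangle inequality gives $|d_j(p,q)-d_j(p',q')|\le d_j(p,p')+d_j(q,q')\le\lambda\big(d_0(p,p')+d_0(q,q')\big)$, so $\{d_j\}$ is equi-Lipschitz for $d_0\times d_0$. Since $(X,d_0)$ is precompact, $X\times X$ is totally bounded, so Arzela--Ascoli (on the completion) yields a subsequence, still called $d_j$, converging uniformly to a function $d_\infty$; this uniform convergence is exactly (\ref{epsj}). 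Passing to the limit, $d_\infty$ is symmetric, obeys the triangle inequality and (\ref{d_j}), hence is a metric biLipschitz to $d_0$. Since the $d_j$ are length metrics (they arise from Riemannian metric tensors, cf.\ Remark~\ref{s-limit-geods}), their uniform limit $d_\infty$ is again a length metric by \cite{BBI}.

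\emph{Step 2 (the Gromov--Hausdorff bound).} The identity map $\iota\colon(X,d_j)\to(X,d_\infty)$ has distortion $\sup_{p,q}\big|d_\infty(p,q)-d_j(p,q)\big|=\epsilon_j$ and is onto (hence $0$-dense), so it is an $\epsilon_j$-isometry in the sense of \cite{BBI}, and the standard estimate gives $d_{GH}\big((X,d_j),(X,d_\infty)\big)\le 2\epsilon_j$, which is (\ref{GHj}); letting $j\to\infty$ gives (\ref{GHjlim}). Equivalently, one realizes both spaces in $X\sqcup X$ with the semimetric that restricts to $d_j$ and $d_\infty$ on the two copies and equals $\inf_{x\in X}\big(d_j(p,x)+\tfrac{\epsilon_j}{2}+d_\infty(x,q)\big)$ between copies, and checks the triangle inequalities and that each copy lies in the $\epsilon_j$-neighborhood of the other.

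\emph{Step 3 (the intrinsic flat bound).} This is the heart of the proof. Fix $j$ and write $\epsilon=\epsilon_j$. The idea is to realize $(X,d_j,T)$ and $(X,d_\infty,T)$ as the two ends of a short cylinder over $X$: on $Z:=X\times[0,2\epsilon]$ one constructs a metric $d_Z$ interpolating between $d_j$ and $d_\infty$---possible precisely because $|d_j-d_\infty|\le\epsilon$ is no larger than the height $2\epsilon$---so that $X\times\{0\}$ is isometric to $(X,d_j)$, $X\times\{2\epsilon\}$ is isometric to $(X,d_\infty)$, and the end inclusions are isometric embeddings $\varphi_j,\varphi_\infty\colon X\hookrightarrow Z$. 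Via the product chart one forms the integer rectifiable $(n+1)$-current $B=T\times[0,2\epsilon]$ on $Z$; since $\partial T=0$, the boundary product rule gives $\partial B=\pm\big(\varphi_{\infty\#}T-\varphi_{j\#}T\big)$, so one may take $B$ as the filling current with vanishing $A$-term and obtain $d_{\mathcal{F}}\big((X,d_j,T),(X,d_\infty,T)\big)\le\mathcal{F}_Z\big(\varphi_{\infty\#}T-\varphi_{j\#}T\big)\le\mass_Z(B)$. It remains to estimate $\mass_Z(B)$: with $d_Z$ chosen as a Euclidean-type combination of the $X$-metric and the interval, the mass of $B$ in the product chart is at most $2^{(n+1)/2}\lambda^{n+1}$ times the Euclidean product mass of $T\times[0,2\epsilon]$, i.e.\ at most $2^{(n+1)/2}\lambda^{n+1}(2\epsilon_j)\mass_{(X,d_0)}(T)$---the powers of $\lambda$ coming from (\ref{d_j}) applied to an $(n+1)$-dimensional current, the factor $2^{(n+1)/2}$ from comparing box and Euclidean norms in the $n+1$ coordinate directions. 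This yields (\ref{Fj}) and then (\ref{Fjlim}); the cylinder construction together with this mass bound is exactly the ingredient imported from \cite{HLS}, in the spirit of the embeddings of \cite{SW-JDG}.

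\emph{Step 4 (structural conclusions and the main obstacle).} That $(X,d_\infty,T)$ is an $n$-dimensional integral current space with $\set(T)=X$ follows from biLipschitz invariance: $d_\infty$ is biLipschitz to $d_0$ by (\ref{d_j}); integer rectifiability, finiteness of mass, and $\partial T=0$ are preserved under biLipschitz homeomorphisms; and positivity of lower density is a biLipschitz invariant, so $\set_{d_\infty}(T)=\set_{d_0}(T)=X$, the last equality being the hypothesis that $(X,d_0,T)$ is an integral current space. Since the $\mathcal{F}$-limit has been identified as $(X,d_\infty,T)$ with underlying set all of $X$, no points disappear in the limit. I expect Step 3 to be the main obstacle---specifically, constructing the cylinder metric $d_Z$ so that it realizes \emph{both} end-slices isometrically while keeping $\mass_Z(B)$ within the asserted bound, with the exact constant $2^{(n+1)/2}\lambda^{n+1}$; Steps 1, 2 and 4 are routine.
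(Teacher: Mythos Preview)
This theorem is not proved in the present paper; it is quoted verbatim from the appendix of \cite{HLS} as a black-box tool (see the sentence introducing it in Section~2.4: ``The following theorem was proven by the second author jointly with Huang and Lee in \cite{HLS} building upon earlier work of Gromov in \cite{Gromov-metric}''). There is therefore no proof here to compare your proposal against.

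That said, your outline is consonant with the argument one expects and, as you yourself note in Step~3, with what is imported from \cite{HLS}: Arzel\`a--Ascoli on $X\times X$ to extract $d_\infty$, the identity map as an $\epsilon_j$-isometry for (\ref{GHj}), and a cylinder filling $T\times[0,2\epsilon_j]$ with an interpolating metric for (\ref{Fj}). One minor point: you deduce that $d_\infty$ is a length metric from the $d_j$ being length metrics, but that hypothesis is not stated in the theorem as recorded here; in every application in this paper the $d_j$ do come from Riemannian tensors, so the assumption is harmless in context, but it is worth flagging that it is being used.
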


\begin{rmrk}
In order to apply this theorem we will use the following method repeatedly.
We will demonstrate that a sequence has pointed convergence of the distance
functions and also satisfies the biLipschitz bound in (\ref{d_j}).  Then by this
theorem there is a converging subsequence.  However by the pointed convergence
we will see that all the subsequences must in fact converge to the same limit
space.  Thus we obtain $\mathcal{F}$ and GH convergence of the original sequence.
\end{rmrk}

\section{Examples}\label{Sect_Ex}
 
In this section we present our examples.  Each example contains a sequence 
of smooth warped product manifolds which converge in various ways to 
warped product metric spaces.  We first study distances on warped product spaces with
deep valleys.  We apply this to present our cinched warped product example.
We then observe what happens to distances on warped product spaces with
peaks.
 
\subsection{Distances on Warped Products with Valleys} 
 
First let us develop the intuitive picture first.  
Consider a warped product manifold $[-\pi,\pi]\times_g {\mathbb{S}}^1$
as in Figure~\ref{fig-cinch} with a warping function 
\be
 f_j(r)=
 \begin{cases}
 1 & r\in[-\pi,- 1/j]
 \\  h(jr) & r\in[- 1/j, 1/j]
 \\ 1 &r\in [1/j, \pi]
 \end{cases}
\ee
where $h$ is a smooth even function defining a valley with
$h(-1)=1$ with $h'(-1)=0$, 
decreasing to $h(0)=h_0\in (0,1]$ and then
increasing back up to $h(1)=1$, $h'(1)=0$.   Keep in mind that the distance
between the level sets, $r^{-1}(a)$ and $r^{-1}(b)$ is $|a-b|$ and so we have
evenly spaced levels drawn in the figure.  

\begin{figure} [h]
\centering
\includegraphics[width=4in]{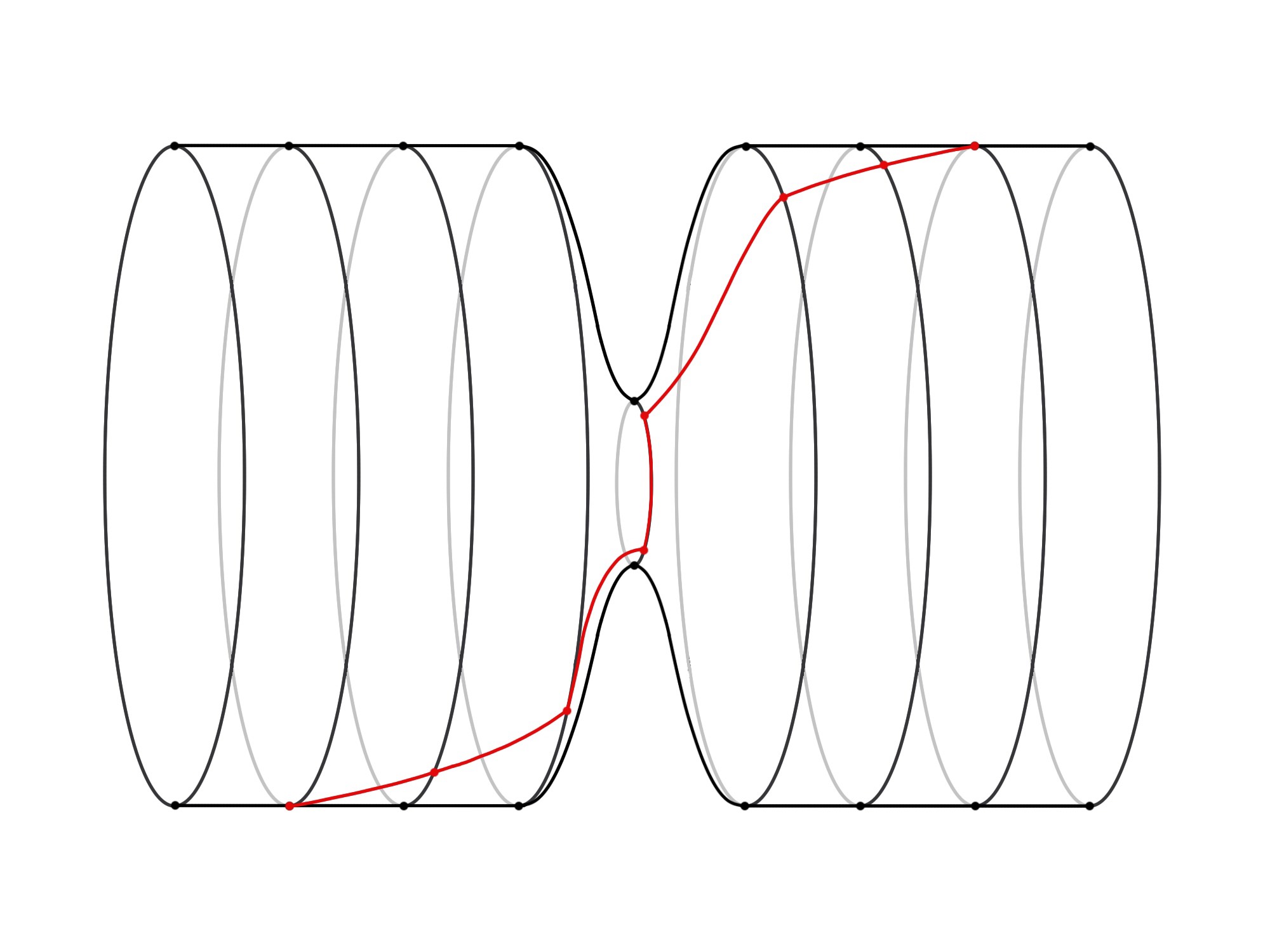}
\caption{The geodesic will cut across the valley}.
\label{fig-cinch}
\end{figure} 

A minimizing geodesic, draw in red in Figure~\ref{fig-cinch},
will proceed diagonally towards the valley, climb down into the valley, 
run along the valley, then climb out and proceed diagonally away from the valley.
The climbing parts are very short if the change in $r$ is small (which is true for
large $j$).  Since it is more efficient to travel around inside the valley (for the change in
$\theta$), it is more efficient to travel almost directly to the valley as in the geodesic
in the figure.  Observe that the length of this geodesic is bounded above by the
length of a curve which goes directly to the valley and straight down, then turns a right
angle to stay along the bottom of the valley, and then makes a right angle to climb out and
move directly to the end point.  Thus
\begin{eqnarray}
d((-r,\theta_1), (r,\theta_2)) &\le& |-r-0| + f(0)\,d_{{\mathbb{S}}^1}(\theta_1,\theta_2) + |0-r|.
\end{eqnarray}
In the following lemmas we use this same basic idea to bound distances in warped 
products with a
wide variety of warping functions.
 
\begin{lem}\label{min-level}   
Given a warped product space $M$ (or respectively $N$) defined as in (\ref{MorN}),
suppose $f(r) \ge f(r_0)$ for all $r\in [r_1,r_2]$ (or respectively $r \in {\mathbb S}^1$).
If $x_1, x_2 \in r^{-1}(r_0)$ then
\be
d_g(x_1,x_2)= f(r_0) d_\sigma(\theta_2,\theta_1).   
\ee
\end{lem}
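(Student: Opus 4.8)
The plan is to prove the two inequalities $d_g(x_1,x_2) \le f(r_0)\,d_\sigma(\theta_1,\theta_2)$ and $d_g(x_1,x_2) \ge f(r_0)\,d_\sigma(\theta_1,\theta_2)$ separately. The upper bound is the easy direction: let $\gamma$ be a minimizing geodesic in $\Sigma$ from $\theta_1$ to $\theta_2$, of length $d_\sigma(\theta_1,\theta_2)$, and consider the curve $C(t) = (r_0, \gamma(t))$ in $M$ (or $N$), which stays entirely in the level set $r^{-1}(r_0)$. Its length is $L_g(C) = \int \sqrt{|r'(t)|^2 + f(r(t))^2|\theta'(t)|^2}\,dt = \int f(r_0)|\gamma'(t)|\,dt = f(r_0)\,d_\sigma(\theta_1,\theta_2)$, since $r' \equiv 0$ along $C$. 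Taking the infimum over curves gives the upper bound.

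For the lower bound, I would take any absolutely continuous curve $C(t) = (r(t), \theta(t))$ from $x_1$ to $x_2$ (by Remark~\ref{s-limit-geods} we may restrict attention to such curves, and in fact a length-minimizer exists and is absolutely continuous). Then
\begin{align}
L_g(C) &= \int_0^1 \sqrt{|r'(t)|^2 + f(r(t))^2|\theta'(t)|^2}\,dt \nonumber \\
&\ge \int_0^1 f(r(t))\,|\theta'(t)|\,dt \nonumber \\
&\ge f(r_0) \int_0^1 |\theta'(t)|\,dt \nonumber \\
&\ge f(r_0)\, L_\sigma(\theta) \;\ge\; f(r_0)\, d_\sigma(\theta_1,\theta_2),
\end{align}
where the first inequality drops the nonnegative $|r'|^2$ term under the square root, the second uses the hypothesis $f(r(t)) \ge f(r_0)$ for all $t$, and the third recognizes $\int_0^1 |\theta'(t)|\,dt$ as the $\sigma$-length of the projected curve $\theta(t)$, which runs from $\theta_1$ to $\theta_2$ and hence has length at least $d_\sigma(\theta_1,\theta_2)$. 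Taking the infimum over all such $C$ gives $d_g(x_1,x_2) \ge f(r_0)\,d_\sigma(\theta_1,\theta_2)$.

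Combining the two bounds yields the equality. The only slightly delicate point is the justification that it suffices to consider absolutely continuous curves and that the projected curve $t \mapsto \theta(t)$ is itself an admissible (absolutely continuous) curve in $\Sigma$ whose length dominates $d_\sigma(\theta_1,\theta_2)$; this is exactly what Remark~\ref{s-limit-geods} and standard metric-geometry facts (as in \cite{BBI}) provide, so I do not expect a real obstacle here — the argument is essentially a one-line estimate under the square root. The statement for $N = \mathbb{S}^1 \times_f \Sigma$ is identical, since the estimate on $L_g(C)$ never used the structure of the base factor, only that $f(r(t)) \ge f(r_0)$ pointwise.
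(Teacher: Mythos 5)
Your proposal is correct and follows essentially the same route as the paper: the lower bound by discarding $|r'(t)|^2$ under the square root and applying $f(r(t))\ge f(r_0)$, and the upper bound by running a $\sigma$-minimizing geodesic within the level set $r^{-1}(r_0)$. No gaps; the argument matches the paper's proof of Lemma~\ref{min-level}.
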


\begin{proof}
Let $C(t)=(r(t), \theta(t))$ be any curve joining $x_1=(r_0, \theta_1)$ to 
$x_2=(r_0, \theta_2)$.   Then
\begin{eqnarray}
L(C[0,1]) &=& \int_0^1 \sqrt{|r'(t)|^2 + |f(r(t))|^2 |\theta'(t)|^2 } \, dt\\
&\ge &  \int_0^1 \sqrt{|0|^2 + |f(r_0)|^2 |\theta'(t)|^2 } \, dt\\
&=& f(r_0) \int_0^1  |\theta'(t)| \, dt\\
&=& f(r_0)\, L_{\Sigma}(\theta[0,1]) \\
&\ge& f(r_0) \,d_\sigma(\theta_2,\theta_1).
\end{eqnarray}
However if we take the curve $C(t)=(r_0, \theta(t))$ where $\theta(t)$
is a minimizing geodesic in $\Sigma$ from $\theta_1$ to $\theta_2$,
we have equality everywhere above.   So the infimum over all lengths
is achieved: 
\be
d_g(x_1,x_2)= \inf_C L(C[0,1])=f(r_0) d_\sigma(\theta_2,\theta_1).   
\ee
\end{proof}

\begin{lem}\label{distEst-L}
Given a warped product space $M$ defined as in (\ref{MorN})
and a pair of points $x_1 = (r_1,\theta_1)$ and $x_2=(r_2,\theta_2)$
with $r_1<r_2$
then the distance between those points is bounded by
\be
d^M_{g_j}(x_1,x_2) \le |r_2-r_1| + D_j(r_1,r_2)d_\sigma(\theta_2,\theta_1)\label{taxidistest}
\ee
where 
\be
\displaystyle D_j(r_1,r_2) = \min_{r \in [r_1,r_2]}f_j(r)
\ee
and $d_\sigma$ is the distance on $(\Sigma,\sigma)$.
\end{lem}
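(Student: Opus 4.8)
The plan is to exhibit an explicit curve $C$ from $x_1$ to $x_2$ whose length realizes (or beats) the right-hand side of \eqref{taxidistest}, and then invoke the definition \eqref{eqn-d} of $d^M_{g_j}$ as an infimum of lengths. Let $r_* \in [r_1,r_2]$ be a point where $f_j$ attains its minimum $D_j(r_1,r_2)$ on $[r_1,r_2]$; such a point exists since $f_j$ is continuous on a compact interval. The idea, following the intuitive picture preceding Lemma~\ref{min-level}, is that the cheapest way to change the $\theta$-coordinate is to do it at the radius where the warping factor is smallest.

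First I would construct $C$ as a concatenation of three pieces. On the first piece, travel from $(r_1,\theta_1)$ to $(r_*,\theta_1)$ along the radial segment $t\mapsto (\text{linear in }r,\ \theta_1)$; since $\theta$ is constant, $L_{g_j}$ of this piece is exactly $|r_*-r_1|$. On the second piece, travel from $(r_*,\theta_1)$ to $(r_*,\theta_2)$ keeping $r=r_*$ fixed and letting $\theta$ follow a minimizing geodesic in $(\Sigma,\sigma)$; since $r'\equiv 0$ here, $L_{g_j}$ of this piece equals $f_j(r_*)\,L_\sigma(\theta[0,1]) = D_j(r_1,r_2)\, d_\sigma(\theta_1,\theta_2)$, exactly as in the computation in the proof of Lemma~\ref{min-level}. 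On the third piece, travel radially from $(r_*,\theta_2)$ back to $(r_2,\theta_2)$, contributing $|r_2-r_*|$. (If $r_*$ equals $r_1$ or $r_2$ one of the radial pieces degenerates, which is harmless; the construction also works on $N$ but the statement here is for $M$.)

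Next I would add up the three contributions:
\be
L_{g_j}(C) = |r_*-r_1| + |r_2-r_*| + D_j(r_1,r_2)\,d_\sigma(\theta_1,\theta_2).
\ee
Since $r_1 \le r_* \le r_2$, we have $|r_*-r_1| + |r_2-r_*| = r_2 - r_1 = |r_2-r_1|$. Because $d^M_{g_j}(x_1,x_2)$ is the infimum of $L_{g_j}$ over all curves joining $x_1$ to $x_2$, it is bounded above by $L_{g_j}(C)$, which gives exactly \eqref{taxidistest}.

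There is essentially no hard obstacle here: the only points requiring a little care are that $r_*$ genuinely exists (compactness plus continuity of $f_j$), that the radial pieces really have length $|r_*-r_1|$ and $|r_2-r_*|$ (immediate from \eqref{eqn-L} with $\theta'\equiv 0$), and that concatenating three curves adds their lengths and yields an admissible competitor for the infimum in \eqref{eqn-d}. One should also note that the curve $C$ is piecewise smooth, hence absolutely continuous, so it is a legitimate competitor in the spirit of Remark~\ref{s-limit-geods}. I expect the write-up to be only a few lines.
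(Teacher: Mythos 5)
Your proposal is correct and is essentially the same argument as the paper's: choose $\hat r\in[r_1,r_2]$ where $f_j$ attains its minimum, concatenate two radial segments with a $\Sigma$-geodesic run along the level $r=\hat r$, and bound $d^M_{g_j}$ by the length of this competitor, noting $|\hat r-r_1|+|r_2-\hat r|=|r_2-r_1|$. No gaps.
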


\begin{proof}
Let $\hat{r}_j \in (r_1,r_2)$ be chosen so that $f_j(\hat{r}_j)=D_j(r_1,r_2)$. 
Construct the following curve between the points $x_1, x_2 \in M_j$, where $\alpha \subset \Sigma$ is a geodesic with respect to $(\Sigma,\sigma)$, $\alpha(0) = \theta_1$ and $\alpha(1) = \theta_2$,
\be
C_j(t)=
\begin{cases}
(r_1+3(\hat{r}_j-r_1)t,\theta_1) & t \in [0,1/3]
\\ (\hat{r}_j, \alpha(3t-1)) & t \in [1/3,2/3]
\\ (\hat{r}_j+3(r_2-\hat{r})(t-2/3),\theta_2) & t \in [2/3,1]
\end{cases}
\ee
and then 
\be
d^M_{g_j}(x_1,x_2) \le L_j(C_j) = |r_2-\hat{r}_j|+f_j(\hat{r}_j)d_\sigma(\theta_2,\theta_1) 
+|\hat{r}_j-r_1|.
\ee
\end{proof}

Almost the same proof can be applied to show the following lemma:

\begin{lem}\label{distEst-L'}
Given a warped product space $N$ defined as in (\ref{MorN})
and a pair of points $x_1 = (r_1,\theta_1)$ and $x_2=(r_2,\theta_2)$
then the distance between those points is bounded by
\be
d^M_{g_j}(x_1,x_2) \le d_{{\mathbb S}^1}(r_1,r_2) + D_j(r_1,r_2)d_\sigma(\theta_2,\theta_1)\label{taxidistest'}
\ee
where 
\be
\displaystyle D_j(r_1,r_2) = \min_{r \in arc(r_1,r_2) }f_j(r)
\ee
where $arc(r_1,r_2)$ is the minor arc between $r_1$ and $r_2$ in ${\mathbb{S}}^1$
and where $d_\sigma$ is the distance on $(\Sigma,\sigma)$.
\end{lem}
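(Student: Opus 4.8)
The statement to prove is Lemma~\ref{distEst-L'}, the analogue of Lemma~\ref{distEst-L} for the space $N = \mathbb{S}^1 \times_f \Sigma$ where the $r$-coordinate lives on a circle rather than an interval.

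The plan is to mimic the proof of Lemma~\ref{distEst-L} almost verbatim, replacing the straight-line segment in $r$ by the minor arc between $r_1$ and $r_2$ in $\mathbb{S}^1$. First I would let $\hat r_j$ be a point on the minor arc $\mathrm{arc}(r_1,r_2)$ realizing the minimum $D_j(r_1,r_2) = \min_{r \in \mathrm{arc}(r_1,r_2)} f_j(r)$, which exists by continuity of $f_j$ and compactness of the arc. Then I would construct a three-piece test curve $C_j$ joining $x_1 = (r_1,\theta_1)$ to $x_2 = (r_2,\theta_2)$: on the first third, travel along the minor arc from $r_1$ to $\hat r_j$ keeping $\theta = \theta_1$ fixed; on the middle third, stay at $r = \hat r_j$ and move $\theta$ along a minimizing $\sigma$-geodesic $\alpha$ from $\theta_1$ to $\theta_2$; on the final third, travel along the minor arc from $\hat r_j$ to $r_2$ keeping $\theta = \theta_2$ fixed. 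Since $\hat r_j$ lies on the minor arc between $r_1$ and $r_2$, the two $r$-segments together traverse exactly the minor arc once, so their combined length is $d_{\mathbb{S}^1}(r_1,r_2)$. Computing $L_j(C_j)$ using \eqref{eqn-L} and the fact that $f_j \equiv$ constant $= f_j(\hat r_j) = D_j(r_1,r_2)$ along the middle segment gives
\be
d^N_{g_j}(x_1,x_2) \le L_j(C_j) = d_{\mathbb{S}^1}(r_1,r_2) + D_j(r_1,r_2)\, d_\sigma(\theta_2,\theta_1),
\ee
which is the claimed bound.

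There is essentially no obstacle here; the only point requiring a word of care is that when $r_1$ and $r_2$ are antipodal on $\mathbb{S}^1$ the minor arc is not unique, but one simply picks either of the two arcs and the argument goes through unchanged, and similarly if $\theta_1 = \theta_2$ the middle segment is degenerate and the bound reduces to the trivial estimate $d \le d_{\mathbb{S}^1}(r_1,r_2)$. I would present this as a short remark that the proof of Lemma~\ref{distEst-L} applies mutatis mutandis, exhibiting the modified curve $C_j$ and the one-line length computation, exactly as the text already signals with the phrase ``Almost the same proof can be applied.''
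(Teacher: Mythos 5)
Your proposal is correct and is exactly the argument the paper intends: it replicates the proof of Lemma~\ref{distEst-L}, replacing the radial segments by the two pieces of the minor arc through the minimizing point $\hat r_j$, which is precisely what the paper means by ``almost the same proof can be applied.'' The added remarks on the antipodal and $\theta_1=\theta_2$ degenerate cases are harmless and do not change the argument.
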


\subsection{Cinched Spaces}

Here we see examples of spaces whose warping functions converge
in the $L^p$ sense but the $GH$ and $SWIF$ limits do not agree with
the $L^p$ limit due to the existence of deep canyons or cinching.
See Figure~\ref{fig-cinch} and now imagine that the valley remains equally as
deep but becomes very narrow.
 
 \begin{ex} \label{Cinched-Torus}  
 Consider the sequence of smooth functions $f_j(r):[-\pi,\pi]\to [1,2]$ 
 \be
 f_j(r)=
 \begin{cases}
 1 & r\in[-\pi,- 1/j]
 \\  h(jr) & r\in[- 1/j, 1/j]
 \\ 1 &r\in [1/j, \pi]
 \end{cases}
\ee
where $h$ is a smooth even function such that 
$h(-1)=1$ with $h'(-1)=0$, 
decreasing to $h(0)=h_0\in (0,1]$ and then
increasing back up to $h(1)=1$, $h'(1)=0$. 
Note that this defines a sequence of smooth Riemannian metrics, $g_j$,
as in (\ref{eqn-g}), with distances, $d_j$, as in (\ref{eqn-d})
on the manifolds, 
\be
M_j= [-\pi,\pi]\times_{f_j} \Sigma \textrm{ or } N_j={\mathbb{S}}^1\times_{f_j} \Sigma
\ee 
for any fixed Riemannian manifold $\Sigma$.   Consider also
$M_\infty$ and $N_\infty$ defined as above with $f_\infty(r)=1 \quad \forall r$.

Despite the fact that
\be
f_j \to f_\infty \textrm{ in } L^p \quad 
\ee
we do not have $M_j$ converging to $M_\infty$ nor $N_j$ to $N_\infty$
in the GH or $\mathcal{F}$ sense.  In fact
\be
M_j \GHto M_0 \textrm{ and } M_j \Fto M_0
\ee
and
\be
N_j \GHto N_0 \textrm{ and } N_j \Fto N_0
\ee
where $M_0$ and $N_0$ are warped metric spaces defined as in (\ref{MorN})
with warping factor
\be
 f_0(r)=
 \begin{cases}
 1 & r\in[-\pi, 0)
 \\  h_0  & r=0
 \\ 1 &r\in (0, \pi]
 \end{cases}
 .
\ee
\end{ex}

\begin{proof}
First we verify our claim about $L^p$ convergence  
\begin{align}
\left(\int_{-\pi}^{\pi}|f_j -1|^pdr \right)^{1/p} &= \left(\int_{-\tfrac{1}{j}}^{\tfrac{1}{j}} |h_j-1|^p dr\right)^{1/p}  \le \left(\frac{2}{j}\right)^{1/p} \rightarrow 0
\end{align}
where we use the fact that $|h_j-1|^p \le 1$ by construction.

Let us consider $(M_j, d_j)$.  Since we have
\be
0< h_0 \le f_j(r) \le f_0(r) \le f_\infty(r)=1
\ee
then
\be
(h_0)^2\, g_\infty \le g_j \le g_0 \le g_\infty
\ee
and
\be
h_0\, d_\infty(x_1,x_2)\le d_j(x_1,x_2) \le d_0(x_1,x_2) \le d_\infty(x_1,x_2).
\ee
Using $d_\infty$ as our background metric we can apply the theorem
in the appendix of \cite{HLS} to see that a subsequence of the $d_j$ converges
uniformly to some limit, $d$, such that
\be \label{dled0}
h_0 \,d_\infty(x_1,x_2)\le d(x_1,x_2) \le d_0(x_1,x_2) \le d_\infty(x_1,x_2).
\ee
In addition the subsequences
converge in the Gromov-Hausdorff and Intrinsic Flat sense:
\be
(M_j, d_j) \GHto (M,d) \textrm{ and }(M_j, d_j, T) \Fto (M,d,T).
\ee
We need only prove $d=d_0$ for then no subsequence was necessary and we
have proven our example.

Consider $x_1, x_2\in M$ such that
\be
d(x_1, x_2) < \min\{ d(x_1,p) + d(p, x_2) \,:\, p\in r^{-1}(0)\}.
\ee
So there exists $\delta>0$ depending on these two points such that
\be
d(x_1, x_2) +\delta \le  \min\{ d(x_1,p) + d(p, x_2) \,:\, p\in r^{-1}(0)\}.
\ee
Then for $N$ sufficiently large, and all $j\ge N$ (in our subsequence) we have
\be
d_j(x_1, x_2) +\delta/2 \le \min\{ d_j(x_1,p) + d_j(p, x_2) \,:\, p\in r^{-1}(0)\}.
\ee
Thus the $L_{g_j}$-shortest curve, $\gamma_j$, between $x_1$ and $x_2$
avoids $r^{-1}(-\delta/4,\delta/4)$.  Here we have $g_j=g_0=g_\infty$ 
so its length
is the same with respect to all three metrics:
\be
L_{g_j}(\gamma_j)=L_{g_0}(\gamma_j)=L_{g_\infty}(\gamma_j).
\ee
So
\be
d_j(x_1,x_2) \ge d_0(x_1,x_2) 
\ee
and taking the limit we have 
\be
d(x_1,x_2)\ge d_0(x_1, x_2)
\ee and 
combining this with (\ref{dled0}) we have
\be
d(x_1,x_2)= d_0(x_1, x_2).
\ee 
In fact for any $L_d$-shortest curve $\gamma$, 
\be\label{segment}
\gamma([t_1,t_2])\cap r^{-1}(0) = \emptyset \implies 
d(\gamma(t_1),\gamma(t_2))=d_0(\gamma(t_1),\gamma(t_2)).
\ee

We need only confirm that $d(x_1,x_2)= d_0(x_1, x_2)$ for
$x_1, x_2\in M$ such that
\be
d(x_1, x_2) = \min\{ d(x_1,p) + d(p, x_2) \,:\, p\in r^{-1}(0)\}.
\ee
Taking the $L_d$-shortest curve $\gamma$ between $x_1$ and $x_2$,
we know that $s_1\le s_2$
\be
s_1=\inf\{t: \, \gamma(t) \in r^{-1}(0)\} 
\ee
and 
\be
s_2=\sup\{t: \, \gamma(t) \in r^{-1}(0)\}.
\ee
We have
\be
d(x_1, x_2)=L_d(\gamma)=d(\gamma(0), \gamma(s_1))
+d(\gamma(s_1), \gamma(s_2))+d(\gamma(s_2), \gamma(1))
\ee
By (\ref{segment}) if $s_1>0$ then for all $\delta>0$ we have
\be
d(\gamma(0), \gamma(s_1-\delta))=d_0(\gamma(0), \gamma(s_1-\delta))
\ee
so
\be
d(\gamma(0), \gamma(s_1))=d_0(\gamma(0), \gamma(s_1)).
\ee
Similarly
\be
d(\gamma(s_2), \gamma(1))=d_0(\gamma(s_2), \gamma(1)).
\ee

Thus we need only confirm that $d(x_1,x_2)= d_0(x_1, x_2)$ for
$x_1, x_2\in r^{-1}(0)$.  This easily follows by applying
Lemma~\ref{min-level} 
to both $f_j$ and $f_0$ since
both functions have minimum $= h_0$ at $r=0$:
\begin{eqnarray}
d(x_1,x_2) 
&=& \lim_{j\to \infty} d_j(x_1, x_2)\\
&= &  h_0 d_\sigma(\theta_1,\theta_2)\\
&=& d_0(x_1,x_2).
\end{eqnarray}

To prove the case where we have a warped product of the form $N$
as in (\ref{MorN}) the proof is almost the same.
\end{proof} 
 
\subsection{Moving Cinches}
 
 Here we explore what happens when the warping functions
 converge in $L^p$ but not pointwise almost everywhere.
 
 \begin{ex}\label{MovingCinched}
We first construct a classical sequence of smooth functions
$f_j:[-\pi,\pi]\to (0,1]$ which converge $L^p$ to $f_\infty=1$ but do not
converge pointwise almost everywhere without taking a subsequence.
 Let
  \be
 f_j(r)=
 \begin{cases}
  h((r-t_j)/\delta_j ) & r\in [t_j-\delta_j, t_j +\delta_j] 
 \\ 1 & \textrm{ elsewhere }
 \end{cases}
\ee
where $h$ is a smooth even function as in Example~\ref{Cinched-Torus}
such that 
$h(-1)=1$ with $h'(-1)=0$, 
decreasing to $h(0)=h_0\in (0,1]$ and then
increasing back up to $h(1)=1$, $h'(1)=0$, and where
 \be
 \{t_j:\, j\in \mathbb{N}\}=\left\{\tfrac{0}{1}, \tfrac{1}{1}, \tfrac{0}{2}, \tfrac{1}{2}, \tfrac{2}{2}, 
 \tfrac{0}{4}, \tfrac{1}{4}, \tfrac{2}{4}, \tfrac{3}{4}, ...\right\}
 \ee
 and
 \be
 \{\delta_j:\, j \in \mathbb{N}\} =\left\{\tfrac{1}{1}, \tfrac{1}{1}, \tfrac{1}{2}, \tfrac{1}{2}, \tfrac{1}{2}, 
 \tfrac{1}{4}, \tfrac{1}{4}, \tfrac{1}{4}, \tfrac{1}{4}...\right\}.
 \ee
Then the cylinders, $N_j$, defined as in (\ref{MorN}) will not converge in the 
GH or ${\mathcal{F}}$ sense without taking a subsequence.  The tori $M_j$
will converge since each torus in this sequence is isometric
to a torus in the sequence of tori in Example~\ref{Cinched-Torus} via an isometry
which moves $t_j$ to $0$.
 \end{ex}
 
 \begin{proof}
First we check that $f_j$ converges in $L^p$ but not pointwise almost everywhere. To this end we check that
\begin{align}
\left (\int_{-\pi}^{\pi}|f_j-1|^p dr \right) ^{1/p}&=\left (\int_{t_j-\delta_j}^{t_j+\delta_j}|h_0-1|^p dr \right) ^{1/p} = (2\delta_j)^{1/p} \rightarrow 0
\end{align}
since $|h_0-1|^p \le 1$ by construction. Of course we do not find pointwise convergence for any $r \in [0,1]$ since for every choice of $J >0$ one can find a $j_1 \ge J$ and a $r \in [-\pi,\pi]$ so that $f_{j_1}(r) = h_0$ and another $j_2 \ge J$ so that $f_{j_2}(r) =1$.  

Now if we take a subsequence where $t_{j_k}=0$, then exactly as in 
Example~\ref{Cinched-Torus} we see that $N_{j_k}$ converges in the GH and $\mathcal{F}$ sense
to $N_0$ of that example.   On the other hand, if we take a subsequence where 
$t_{j'_k}=1$, then imitating the proof in 
Example~\ref{Cinched-Torus} we see that $N_{j'_k}$ converges in the GH and $\mathcal{F}$ sense
to $N_0'$ which is a warped product whose warping function is $1$ everywhere except
at $r=1$ where it is $h_0$.   Thus the original sequence of $N_j$ of this example
has no GH nor $\mathcal{F}$ limit.
\end{proof}

\subsection{Avoiding Ridges}
  
The cinched spaces of Example~\ref{Cinched-Torus} 
 did not converge to their $L^p$ limit because their warping functions, $f_j$, all had a minimum uniformly below the level of their $L^p$ limit, $f_\infty$.  
 Here we will see there is no corresponding problem when the $f_j$
 have a maximum uniformly above the level of their $L^p$ limit.   
 
 \begin{figure} [h]
\centering
\includegraphics[width=4in]{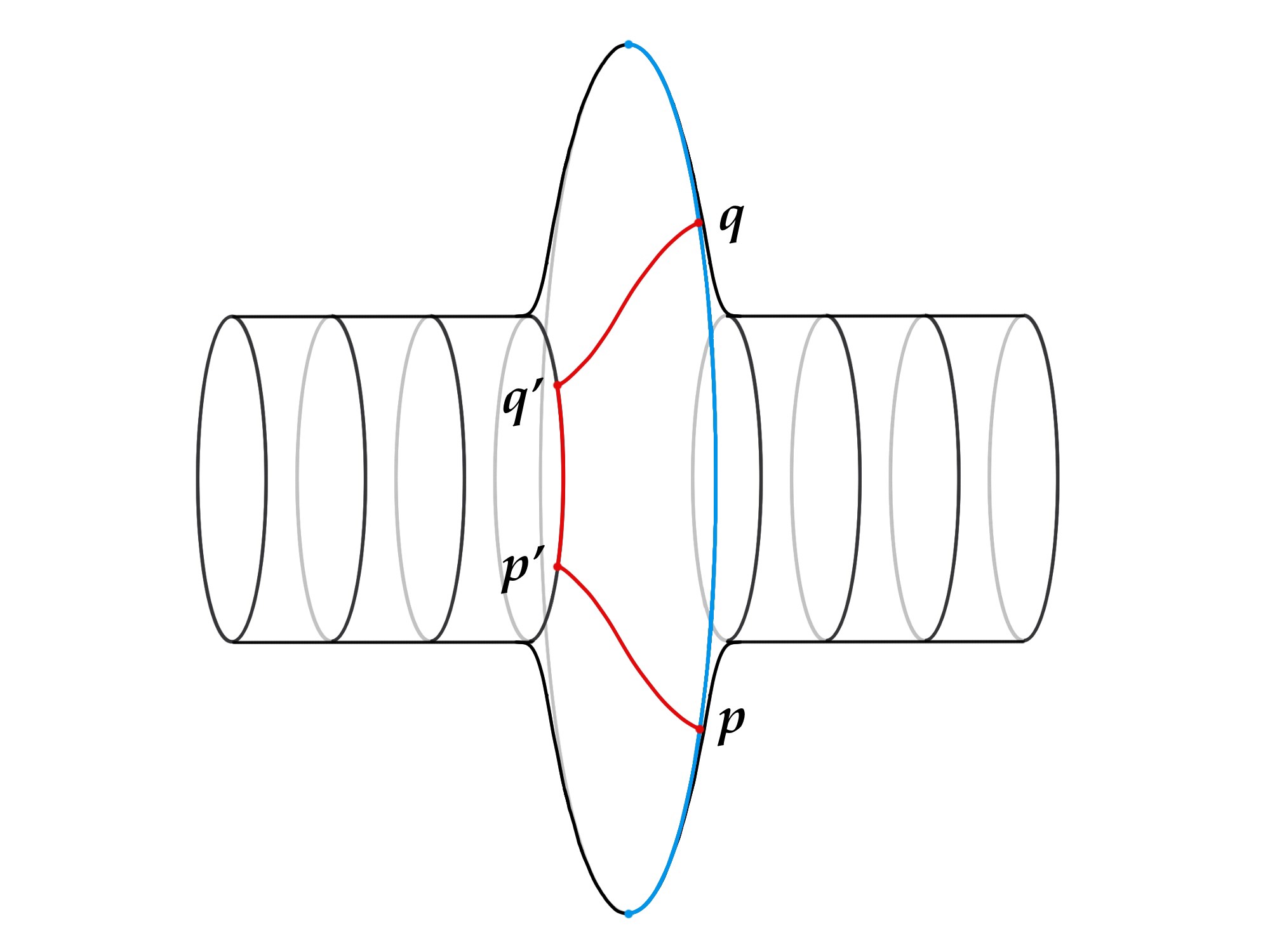}
\caption{A curve $\gamma$ from $p$ to $q$ on a ridge, which
first cuts down to $p'$ and then runs across to $q'$ before cutting up
to $q$ is shorter than curve running along the ridge between $p$ and $q$
if the rudge is narrow enough.}
\label{fig-ridge}
\end{figure} 

 In the following lemma, 
 we have a ridge as in Figure~\ref{fig-ridge}, the minimal geodesic between
 points, $p,q$ lying on that ridge, will not run along the ridge.  In the following 
 we consider $f_j$ with a
 maximum at $r_*$ and thus there is a ridge along the level set $f_j^{-1}(r_*)$.
 
 \begin{lem}\label{AvoidPeak2}
 Given $r_*, \hat{r} \in [r_0,r_1]$, the distance between
 $x_1=(r_*,\theta_1)$ and $x_2=(r_*,\theta_2)$ 
 in a warped product space 
 is bounded above
 by
 \be
d(x_1,x_2)\le 2|\hat{r}-r_*| + f_j(\hat{r}) d_\sigma(\theta_1,\theta_2).
\ee
Thus for a fixed $r_* \in [r_0,r_1]$, 
  if there exists a $\hat{r} \in [r_0,r_1]$ so that
 \be
f_j(\hat{r}) < f_j(r_*) - 2\frac{|\hat{r}-r_*|}{d_\sigma(\theta_1,\theta_2)}\label{L-cond}
\ee
then the minimizing geodesic from $x_1=(r_*,\theta_1)$ to $x_2=(r_*,\theta_2)$,$\theta_1,\theta_2\in \Sigma$, $\theta_1 \not = \theta_2$, cannot be a curve with constant $r$-component, $r(t)=r_*$.
 \end{lem}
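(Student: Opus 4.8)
The plan is to produce an explicit competitor curve from $x_1$ to $x_2$ that dips down to the level $r=\hat r$, runs across that level, and climbs back up, exactly as in the intuitive picture of Figure~\ref{fig-ridge}, and then compare its length to that of the ``straight'' curve that stays at $r=r_*$. First I would define the piecewise path $C(t)=(r(t),\theta(t))$ with three segments: on $[0,1/3]$ fix $\theta=\theta_1$ and move linearly in $r$ from $r_*$ to $\hat r$; on $[1/3,2/3]$ fix $r=\hat r$ and let $\theta$ traverse a minimizing $\sigma$-geodesic from $\theta_1$ to $\theta_2$; on $[2/3,1]$ fix $\theta=\theta_2$ and move linearly in $r$ from $\hat r$ back to $r_*$. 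This is essentially the curve $C_j$ already used in the proof of Lemma~\ref{distEst-L}, just with both endpoints on the level $r_*$ rather than on distinct levels.

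\medskip

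Computing the length via \eqref{eqn-L}, the two $r$-segments each contribute $|\hat r - r_*|$ (the $\theta'$ term vanishes there) and the middle segment contributes $f_j(\hat r)\,L_\sigma$ of the chosen $\sigma$-geodesic $= f_j(\hat r)\,d_\sigma(\theta_1,\theta_2)$, since $r$ is constant $=\hat r$ on that segment. Hence $L_{g_j}(C) = 2|\hat r - r_*| + f_j(\hat r)\,d_\sigma(\theta_1,\theta_2)$, and by the definition \eqref{eqn-d} of $d$ as an infimum of lengths we get $d(x_1,x_2)\le 2|\hat r - r_*| + f_j(\hat r)\,d_\sigma(\theta_1,\theta_2)$, which is the first claimed bound. (If $\hat r = r_*$ the curve degenerates but the bound still holds trivially, and it is also just a special case of Lemma~\ref{min-level}'s upper half.)

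\medskip

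For the second assertion, suppose toward a contradiction that a minimizing geodesic $\gamma$ from $x_1$ to $x_2$ has constant $r$-component $r(t)\equiv r_*$. Then $\gamma$ lies entirely in the level set $r^{-1}(r_*)$, so its length is $\int_0^1 f_j(r_*)|\theta'(t)|\,dt = f_j(r_*)\,L_\sigma(\theta[0,1]) \ge f_j(r_*)\,d_\sigma(\theta_1,\theta_2)$, and since $\theta_1\neq\theta_2$ this is strictly positive. Because $\gamma$ is minimizing, $d(x_1,x_2) = L_{g_j}(\gamma) \ge f_j(r_*)\,d_\sigma(\theta_1,\theta_2)$. Combining with the upper bound just proved,
\be
f_j(r_*)\,d_\sigma(\theta_1,\theta_2) \le d(x_1,x_2) \le 2|\hat r - r_*| + f_j(\hat r)\,d_\sigma(\theta_1,\theta_2).
\ee
Dividing by $d_\sigma(\theta_1,\theta_2)>0$ and rearranging yields $f_j(\hat r) \ge f_j(r_*) - 2|\hat r - r_*|/d_\sigma(\theta_1,\theta_2)$, contradicting hypothesis \eqref{L-cond}. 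Therefore no minimizing geodesic can have constant $r$-component equal to $r_*$.

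\medskip

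\emph{Main obstacle.} There is essentially no analytic difficulty here — the only thing to be careful about is the logical structure of the second part: one must note that a curve with $r(t)\equiv r_*$ has length \emph{at least} $f_j(r_*)\,d_\sigma(\theta_1,\theta_2)$ (using $L_\sigma \ge d_\sigma$), and that a \emph{minimizing} geodesic realizes $d(x_1,x_2)$, so that the existence of the cheaper competitor $C$ forces a strict inequality of lengths whenever \eqref{L-cond} holds strictly. One should also remark (as the paper does via Remark~\ref{s-limit-geods} and Remark~\ref{r-limit-geods}) that for continuous, positively-bounded $f_j$ minimizing geodesics actually exist, so the statement ``the minimizing geodesic cannot have $r(t)=r_*$'' is not vacuous; this is the only place any external input is needed.
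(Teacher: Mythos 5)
Your proposal is correct and follows essentially the same route as the paper: the same three-segment competitor curve dipping to $r=\hat r$, the same length computation giving the upper bound, and the same comparison with the length of a constant-$r$ curve (the paper phrases the second part as a direct inequality $L_j(\gamma)<L_j(\beta)$ rather than your contradiction via the distance bound, but this is only a cosmetic difference). Your explicit remark that \emph{any} curve with $r(t)\equiv r_*$ has length at least $f_j(r_*)\,d_\sigma(\theta_1,\theta_2)$ is a slight tightening of the paper's terser argument, but not a different method.
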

 
See Figure~\ref{fig-ridge} taking
$p=x_1=(r_*,\theta_1)$ and $q=x_2=(r_*,\theta_2)$ 
and $p'=(\hat{r},\theta_1)$ and $q=x_2=(\hat{r},\theta_2)$.
So $d(p,q) \le L(\gamma)=d(p,p')+d(p',q') +d(q',q)$ where
$d(p,p')=d(q,q')=|r_*-\hat{r}|.$
 
 \begin{proof}
Let $x_1,x_2 \in M_j$ with coordinates $x_1=(r_*,\theta_1)$ to $x_2=(r_*,\theta_2)$,$\theta_1,\theta_2\in \Sigma$, $\theta_1 \not = \theta_2$ so that \eqref{L-cond} is satisfied for $r_*$. Let $\alpha\subset \Sigma$ be a curve between $\theta_1,\theta_2$ with length $L_{\Sigma}(\alpha)=d_\sigma(\theta_1,\theta_2)$ and consider the curve
\be
\gamma(t)=
\begin{cases}
(r_*+3(\hat{r}-r_*)t, \theta_1) & t \in [0,1/3]
\\ (\hat{r}, \alpha(3t-1)) & t \in [1/3,2/3]
\\ (\hat{r}+3(r_*-\hat{r})(t-2/3),\theta_2) & t \in [2/3,1]
\end{cases}
\ee
as depicted in Figure~\ref{fig-ridge}.  Then 
\be
L_j(\gamma) = 2|\hat{r}-r_*| + f_j(\hat{r}) d_\sigma(\theta_1,\theta_2).
\ee
So if we consider $\beta(t) = (r_*,\alpha(t))$ 
and use the assumption \eqref{L-cond} then we find that
\be
L_j(\gamma) < L_j(\beta)
\ee
and hence $\beta(t)$ cannot be the minimizing geodesic.
 \end{proof}

\subsection{A Single Ridge Disappears}

Here we see that a sequence of warped product spaces with a consistently
high ridge that is increasingly narrow converges in the $L^p$,
ptwise a.e., $GH$, and $\mathcal{F}$
sense to an isometric product manifold as if the ridge simply disappears
despite the fact that the
warping functions do not converge pointwise to the constant function $1$. 
See Figure~\ref{fig-ridge}.

\begin{ex}\label{SingleRidge}
Consider the sequence of functions $f_j(r):[-\pi,\pi]\to [1,2]$ with
 \be
 f_j(r)=
 \begin{cases}
 1 & r\in[-\pi,- 1/j]
 \\  h(jr) & r\in[- 1/j, 1/j]
 \\ 1 &r\in [1/j, \pi]
 \end{cases}
\ee
where $h=h_{ridge}$ is a smooth even function such that 
$h(-1)=1$ with $h'(-1)=0$, 
increasing to $h(0)=h_0\in (1,2]$ and then
decreasing back down to $h(1)=1$, $h'(1)=0$. 
Note that this defines a sequence of smooth Riemannian metrics, $g_j$,
as in (\ref{eqn-g}), with distances, $d_j$, as in (\ref{eqn-d})
on the manifolds, 
\be
M_j= [-\pi,\pi]\times_{f_j} \Sigma \textrm{ or } N_j={\mathbb{S}}^1\times_{f_j} \Sigma
\ee 
for any fixed Riemannian manifold $\Sigma$.   Consider also
$M_\infty$ and $N_\infty$ defined as above with $f_\infty(r)=1 \quad \forall r$.
Here we have
\be
f_j \to f_\infty=1 \textrm{ in } L^p \textrm{ but not ptwise }
\ee
and yet $M_j \to M_\infty$ and $N_j \to N_\infty$ in both the GH and $\mathcal{F}$
sense.
\end{ex}

\begin{proof}
First we check that $f_j$ converges in $L^p$  to $f_\infty$. To this end we check that
\begin{align}
\left (\int_{-\pi}^{\pi}|f_j-f_\infty|^p dr \right) ^{1/p}&=\left (\int_{-1/j}^{1/j}|h(jr)-1|^p dr \right) ^{1/p} 
\le (2/j)^{1/p} \rightarrow 0
\end{align}
since $|h_j-1|^p \le 1$ by construction.   Observe that $f_j$ does not converge
pointwise to $f_\infty$ because $f_j(0)=h_0 > 1=f_\infty (0)$.
Let
\be \label{Jdeltaridge}
J_\delta= 1/\delta
\ee
so that $f_j(r)=f_\infty(r)$ on $[0, -1/j]\cup [1/j, 1]$ for all $j\ge J_\delta$.

Next observe that since $2 f_\infty(r) \ge f_j(r) \ge f_\infty(r)$ at all $r\in [-\pi,\pi]$ we have
\be \label{for-HLS-ridge}
d_\infty(p,q) \le d_j(p,q) \le 2 d_\infty(p,q) \qquad \forall p,q.
\ee

Since our limit space, $M_\infty$, is an isometric product space, 
any pair of points $x_1=(s_1,\theta_1)$ to $x_2=(s_2,\theta_2)$ 
with $s_1<s_2$ is
joined by a smooth $L_\infty$ minimizing 
geodesic, $C:[0,1]\to M_\infty$, such that
\be
d_\infty(p,q)=L_\infty(C).
\ee
In fact $C(t)=(r(t), \theta(t))$ where $r:[0,1] \to [r_1,r_2]$ is strictly increasing from
from $s_1$ to $s_2$,
and $\theta:[0,1] \to \Sigma$ is a geodesic from $\theta_1$ to $\theta_2$ with respect to $(\Sigma,\sigma)$.
Let $T_\delta \subset [0,1]$ be defined as the possibly empty interval
\be
T_\delta =\{t:\, r(t) \in [-\delta,\delta]\}.
\ee
Observe that the length of $C$ restricted to the interval $T_\delta$ satisfies
\be
L_\infty(C(T_\delta))\le 2 \delta L_\infty(C) \le 2 \delta d_\infty(x_1,x_2).
\ee
For $j\ge J_\delta$ as in (\ref{Jdeltaridge}), we have
\begin{eqnarray}
d_j(x_1,x_2) \le L_j(C) &=& \int_0^1 g_j(C'(t),C'(t))^{1/2} \, dt \\
&\le & \int_{T_\delta} 2 g_\infty(C'(t),C'(t))^{1/2}\\
&& + \int_{[0,1]\setminus T_\delta}  g_\infty(C'(t),C'(t))^{1/2}\\
&\le &  2 L_\infty(C(T_\delta)) + L_\infty(C[0,1])\\
&\le & (1 + 2 \delta) d_\infty(x_1,x_2).
\end{eqnarray} 
Thus for $x_1$ and $x_2$ lying on different levels of $r$ we have pointwise convergence
$d_j(x_1,x_2) \to d_\infty(x_1,x_2)$.

Taking points that lie on the same level, $x_1=(s,\theta_1)$ to $x_2=(s,\theta_2)$,
we know that the minimizing geodesic, $C$, in our isometric product will have the
form $C(t)=(s, \theta(t))$.  If the points do not lie on the ridge, $s\neq 0$, and so
\be
d_j(x_1,x_2) \le L_j(C)=L_\infty(C) =d_\infty(x_1,x_2) \qquad \forall j \ge J_\delta.
\ee 
So again we have pointwise convergence $d_j(x_1,x_2) \to d_\infty(x_1,x_2)$.

If the points both lie on the ridge $x_1=(0,\theta_1)$ to $x_2=(0,\theta_2)$
then by Lemma~\ref{AvoidPeak2} we have 
\begin{eqnarray} 
d_j(x_1,x_2) &\le& 1 d_\Sigma(\theta_1,\theta_2) + 2\delta \qquad \forall j \ge J_\delta\\ &=& d_\infty(x_1,x_2) + 2 \delta \qquad \forall j \ge J_\delta. 
\end{eqnarray}
And again we have pointwise convergence $d_j(x_1,x_2) \to d_\infty(x_1,x_2)$.

By Theorem~\ref{HLS-thm} combined with (\ref{for-HLS-ridge}) we know a subsequence
$d_{j_k}$ converges uniformly to some limit distance.  Since we have pointwise 
convergence to $d_\infty$, we know in fact that $d_j$ thus converge uniformly to $d_\infty$
without even taking a subsequence.  Furthermore we have Gromov-Hausdorff and intrinsic flat convergence.

The proof when we have warped around ${\mathbb S}^1$ to create $N_j$ is very similar.
\end{proof}

\subsection{Moving Ridges}
 
 Here we see a sequence of spaces which have $f_j$ converging
 to $f_\infty=1$ in the $L^p$ sense and $f_j \ge 1$.  The 
 sequence does not converge pointwise almost everywhere unless
 one takes a subsequence.  Nevertheless by Theorem~\ref{WarpConv}
 there is a GH and a SWIF limit without taking a subsequence
 and indeed the limit is the space warped by $f_\infty$. 
 
 \begin{ex}\label{MovingRidges} 
We first construct a classical sequence of smooth functions
$f_j:[-\pi,\pi]\to [1,2]$ which converge $L^p$ to $f_\infty=1$ but do not
converge pointwise almost everywhere without taking a subsequence.
Let
 \be
 \{s_j:\, j\in \mathbb{N}\}=\left\{\tfrac{0}{1}, \tfrac{1}{1}, \tfrac{0}{2}, \tfrac{1}{2}, \tfrac{2}{2}, 
 \tfrac{0}{4}, \tfrac{1}{4}, \tfrac{2}{4}, \tfrac{3}{4}, ...\right\}
 \ee
 and
 \be
 \{\delta_j:\, j \in \mathbb{N}\} =\left\{\tfrac{1}{1}, \tfrac{1}{1}, \tfrac{1}{2}, \tfrac{1}{2}, \tfrac{1}{2}, 
 \tfrac{1}{4}, \tfrac{1}{4}, \tfrac{1}{4}, \tfrac{1}{4}...\right\}.
 \ee
 Let
  \be
 f_j(r)=
 \begin{cases}
  h((r-s_j)/\delta_j ) & r\in [s_j-\delta_j, s_j +\delta_j] 
 \\ 1 & \textrm{ elsewhere }
 \end{cases}
\ee
where $h$ is a smooth even function such that 
$h(-1)=1$ with $h'(-1)=0$, 
increasing up to $h(0)=h_0\in (1,2]$ and then
decreasing back down to $h(1)=1$, $h'(1)=0$. 
Note that this defines a sequence of smooth Riemannian metrics, $g_j$,
as in (\ref{eqn-g}), with distances, $d_j$, as in (\ref{eqn-d})
on the manifolds, 
\be
M_j= [-\pi,\pi]\times_{f_j} \Sigma \textrm{ or } N_j={\mathbb{S}}^1\times_{f_j} \Sigma
\ee 
for any fixed Riemannian manifold $\Sigma$. Consider also
$M_\infty$ and $N_\infty$ defined as above with $f_\infty(r)=1 \quad \forall r$.
Here we have
\be
f_j \to f_\infty=1 \textrm{ in } L^p \textrm{ but not ptwise }
\ee
and yet $M_j \to M_\infty$ and $N_j \to N_\infty$ in both the GH and $\mathcal{F}$
sense.
 \end{ex}
 
 \begin{proof}
First we check that $f_j$ converges in $L^p$ but not pointwise almost everywhere. To this end we check that
\begin{align}
\left (\int_{-\pi}^{\pi}|f_j-1|^p dr \right) ^{1/p}&=\left (\int_{s_j-\delta_j}^{s_j+\delta_j}|h_j-1|^p dr \right) ^{1/p} = (2\delta_j)^{1/p} \rightarrow 0
\end{align}
since $|h_j-1|^p \le 1$ by construction. Of course we do not find pointwise convergence for any $r \in [-\pi,\pi]$ since for every choice of $J >0$ one can find a $j_1 \ge J$ so that $f_{j_1}(r) = 0$ and another $j_2 \ge J$ so that $f_{j_2}(r) > 0$.

The proof of the Gromov-Hausdorff and Intrinsic Flat convergence follows almost
exactly as in Example~\ref{SingleRidge} except that we must choose $J_\delta$
and $T_\delta$ differently.  We skip this proof since the convergence follows from 
Theorem~\ref{WarpConv} anyway.
\end{proof}
 
\subsection{Many Ridges}
 
 Here we see a sequence of spaces which have $f_j$ converging
 to $f_\infty=1$ in the $L^p$ sense and $f_j \ge 1$.  The 
 sequence converges pointwise to a nowhere continuous function.
 Nevertheless by Theorem~\ref{WarpConv}
 there is a GH and a SWIF limit without taking a subsequence
 and indeed the limit is the isometric product space.
 
 \begin{ex}\label{ManyRidges} 
We first construct a classical sequence of smooth functions
$f_j:[-\pi,\pi]\to [1,2]$ as in Figure~\ref{fig-many-ridges}
which converge $L^p$ to $f_\infty=1$ but do not
converge pointwise almost everywhere without taking a subsequence.
Let
 \begin{eqnarray}
 S&=&\left\{s_{i,j}=-\pi + \tfrac{2\pi i}{2^j}\,: \,  i=1,2,... (2^j-1),\, j\in \mathbb{N}\right\}\\
&=& \left\{-\pi + \tfrac{2\pi}{2},-\pi+\tfrac{2\pi}{4}, -\pi + \tfrac{2\pi 2}{4}, -\pi + \tfrac{2\pi3}{4}, 
-\pi + \tfrac{2\pi}{8},...\right\}
 \end{eqnarray}
 which is dense in $[-\pi,\pi]$
 and
 \be
 \{\delta_j=(1/2)^{2j}:\, j \in \mathbb{N}\} =\{1/4, 1/16, 1/32,....\}.
 \ee
 Let
  \be
 f_j(r)=
 \begin{cases}
  h((r-s_{i,j})/\delta_j ) & r\in [s_{i,j}-\delta_j, s_{i,j} +\delta_j] \textrm{ for } i =1..2^j-1
 \\ 1 & \textrm{ elsewhere }
 \end{cases}
\ee
where $h$ is a smooth even function such that 
$h(-1)=1$ with $h'(-1)=0$, 
increasing up to $h(0)=h_0\in (1,2]$ and then
decreasing back down to $h(1)=1$ with $h'(1)=0$. 
Note that this defines a sequence of smooth Riemannian metrics, $g_j$,
as in (\ref{eqn-g}), with distances, $d_j$, as in (\ref{eqn-d})
on the manifolds, 
\be
M_j= [-\pi,\pi]\times_{f_j} \Sigma \textrm{ or } N_j={\mathbb{S}}^1\times_{f_j} \Sigma
\ee 
for any fixed Riemannian manifold $\Sigma$. 
Consider also
$M_\infty$ and $N_\infty$ defined as above with $f_\infty(r)=1 \quad \forall r$.
Here we have
\be
f_j \to f_\infty=1 \textrm{ in } L^p \textrm{ but not ptwise }
\ee
and yet $M_j \to M_\infty$ and $N_j \to N_\infty$ in both the GH and $\mathcal{F}$
sense.
 \end{ex}
 
 \begin{figure} [h]
\centering
\includegraphics[width=5in]{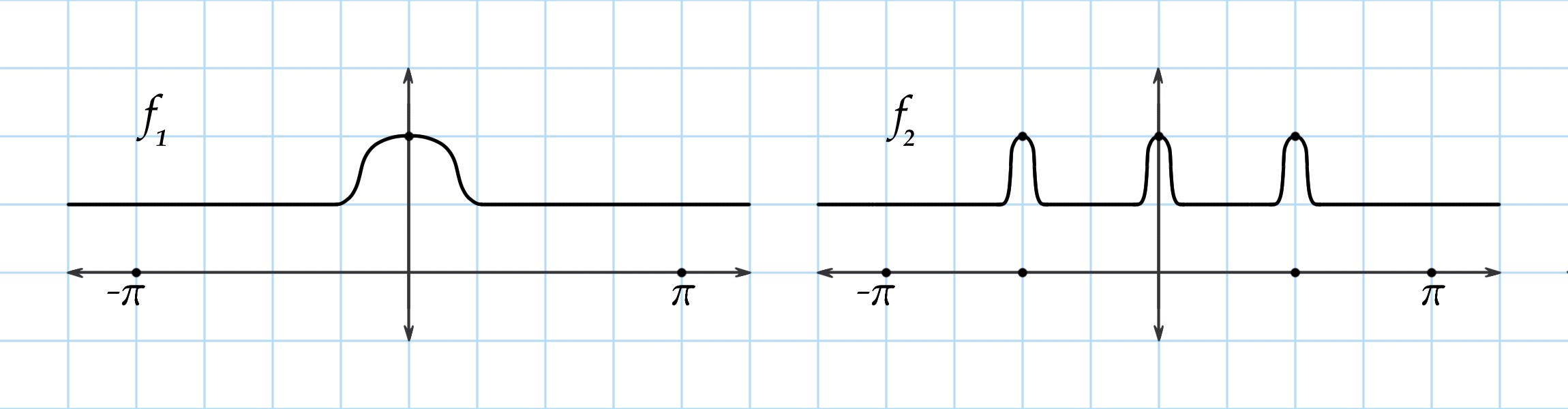}
\caption{The warping functions of Example~\ref{ManyRidges}.}
\label{fig-many-ridges}
\end{figure}
 
 \begin{proof}
First we check that $f_j$ converges in $L^p$ 
\begin{eqnarray}
\left (\int_{-\pi}^{\pi}|f_j-1|^p dr \right) ^{1/p}
&=& \left(\sum_{i=1}^{2^j-1} \int_{s_{i,j}-\delta_j}^{s_{i,j}+\delta_j}|f_j-1|^p dr \right) ^{1/p} \\
&=& \left((2^j-1)(2\delta_j)\right)^{1/p} \\
&=& \left((2^j-1)(1/2)^{2j}\right)^{1/p}\rightarrow 0.
\end{eqnarray}
Next observe that $f_j$ converges pointwise on $S$ to $h_0$
and pointwise to $1$ elsewhere.  Since $S$ is dense and
$h_0>1$ the pointwise limit is continuous nowhere.

The proof of the Gromov-Hausdorff and Intrinsic Flat convergence follows almost
exactly as in Example~\ref{SingleRidge} except that we must choose $J_\delta$
and $T_\delta$ differently.  We skip this proof since the convergence follows from 
Theorem~\ref{WarpConv} anyway.
\end{proof}

\subsection{Converging to Euclidean-Taxi Spaces}   

In Theorem~\ref{WarpConv} we will prove that if $f_j \ge 1$ and $f_j \to 1$ in the
$L^p$ sense then we have Gromov-Hausdorff and Intrinsic Flat convergence to
the isometric product space just as in Examples~\ref{SingleRidge}, ~\ref{MovingRidges}
and~\ref{ManyRidges}.   We now investigate what might happen if 
$f_j$ does not converge to $1$ in the $L^p$
sense but does have a dense collection of points where $f_j$ converges pointwise to
$1$.   In the example below we see that this does not suffice to prove GH or intrinsic flat
convergence to the isometric product space.   

Here we will construct a sequence of warped product spaces
with increasingly many cinches.  The limit metric we obtain in this example
is not a Riemannian metric but a metric of the following form:

\begin{defn}\label{defn-ET}
Let $M$ and $N$ be product manifolds as in (\ref{MorN}).  For any $R>1$, we
define the minimized R-stretched Euclidean taxi metric ($R-ET$ metric)
between $x_1=(s_1,\theta_1)$ and $x_2=(s_2,\theta_2)$ to be
\be
d^M_{R-ET}(x_1,x_2)= \min_{\Theta \in [0,d_\Sigma(\theta_1, \theta_2)]} \sqrt{|s_1-s_2|^2 + R^2\Theta^2} + d_\Sigma(\theta_1, \theta_2)-\Theta.
\ee
\be
d^N_{R-ET}(x_1,x_2)= \min_{\Theta \in [0,d_\Sigma(\theta_1, \theta_2)]} \sqrt{d_{{\mathbb S}^1}(s_1,s_2)^2 + R^2\Theta^2} + d_\Sigma(\theta_1, \theta_2)-\Theta.
\ee
\end{defn}

Note that the $R-ET$ metric is smaller than the isometric product metric with the $\theta$
direction scaled by $R$ (achieved at $\Theta=d_\Sigma(\theta_1, \theta_2)$), and 
it is also smaller than the taxi product (achieved at $\Theta=0$).   One may view the
$R-ET$ metric as an infimum over lengths of all curves which are partly line segments 
of the form $\theta=ms+\theta_0$ (whose lengths are measured by stretching the Euclidean
metric by $R$ in the $\theta$ direction) and partly vertical segments purely in the $\theta$ direction (whose lengths are not rescaled).  Without stretching, taking $R=1$, we
see the minimum is achieved going purely diagonal with the standard Euclidean metric.

It is not immediately obvious that $R-ET$ metrics are true metrics satisfying positivity, symmetry and the triangle inequality.   We prove this in the following lemma:

\begin{lem} \label{ET}
When 
\be \label{near-ET}
d_\Sigma(\theta_1, \theta_2)\,\,\,\le \,\,\,\frac{|s_1-s_2|}{R\sqrt{R^2-1}} 
\ee
then the metric is an isometric product
\be \label{near-ET-2}
d^M_{R-ET}((s_1, \theta_1),(s_2,\theta_2))= \sqrt{|s_1-s_2|^2 + R^2 d_{\Sigma}(\theta_1, \theta_2)^2}. 
\ee 
and otherwise the metric is a stretched taxi product:
\be\label{near-ET-3}
d^M_{R-ET}((s_1, \theta_1),(s_2,\theta_2))= |s_1-s_2|\left( \frac{\sqrt{R^2-1}}{R} \right)+ d_{\Sigma}(\theta_1, \theta_2). 
\ee
In fact $d^M_{R-ET}$ is a minimum of these two metrics and is a length metric
whose balls are the unions of diamonds and ellipses as in Figure~\ref{R-ET-balls}.
It is a true metric satisfying positivity, symmetry and the triangle inequality.   
\end{lem}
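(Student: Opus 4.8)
The plan is to reduce the lemma to a one-variable optimization and then read off everything. Fix $x_1=(s_1,\theta_1)$ and $x_2=(s_2,\theta_2)$ and abbreviate $a=|s_1-s_2|$ (or $d_{\mathbb{S}^1}(s_1,s_2)$ in the case of $N$) and $L=d_\Sigma(\theta_1,\theta_2)$, so that
\be
d^M_{R-ET}(x_1,x_2)=\min_{\Theta\in[0,L]}g(\Theta),\qquad g(\Theta):=\sqrt{a^2+R^2\Theta^2}+L-\Theta .
\ee
Since $\Theta\mapsto\sqrt{a^2+R^2\Theta^2}$ is the Euclidean norm of the affine map $\Theta\mapsto(a,R\Theta)$ it is convex, and $L-\Theta$ is affine, so $g$ is convex; moreover $g'(\Theta)=R^2\Theta/\sqrt{a^2+R^2\Theta^2}-1$ is negative on $[0,\Theta^\ast)$ and positive on $(\Theta^\ast,\infty)$, where $\Theta^\ast:=a/(R\sqrt{R^2-1})$ is the unique critical point, which exists precisely because $R>1$. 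The degenerate case $a=0$ is immediate: then $g(\Theta)=(R-1)\Theta+L$ is increasing, the minimum is $g(0)=L=d_\Sigma(\theta_1,\theta_2)$, and both claimed formulas reduce to this value.

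Next I would split on the position of $\Theta^\ast$ relative to $[0,L]$. If $L\le\Theta^\ast$, which is exactly hypothesis \eqref{near-ET}, then $[0,L]\subset[0,\Theta^\ast]$, $g$ is decreasing there, the minimum is attained at $\Theta=L$, and $d^M_{R-ET}(x_1,x_2)=g(L)=\sqrt{a^2+R^2L^2}$, which is \eqref{near-ET-2}. If $L>\Theta^\ast$, then $\Theta^\ast\in(0,L)$ is the global minimizer of $g$ and, using $a^2+R^2(\Theta^\ast)^2=a^2R^2/(R^2-1)$, one computes $g(\Theta^\ast)=aR/\sqrt{R^2-1}+L-a/(R\sqrt{R^2-1})=a\sqrt{R^2-1}/R+L$, which is \eqref{near-ET-3}. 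The two expressions agree when $L=\Theta^\ast$; in fact the elementary identity
\be
\Bigl(\sqrt{a^2+R^2L^2}\Bigr)^2-\Bigl(\tfrac{a\sqrt{R^2-1}}{R}+L\Bigr)^2=(R^2-1)\,(\Theta^\ast-L)^2\ \ge\ 0
\ee
shows that $d^M_{R-ET}$ coincides with the $R$-stretched Pythagorean product metric $\rho_1(x_1,x_2)=\sqrt{|s_1-s_2|^2+R^2d_\Sigma(\theta_1,\theta_2)^2}$ on the region $\{d_\Sigma(\theta_1,\theta_2)\le a/(R\sqrt{R^2-1})\}$ and with the stretched taxi product metric $\rho_2(x_1,x_2)=|s_1-s_2|\sqrt{R^2-1}/R+d_\Sigma(\theta_1,\theta_2)$ on the complementary region; so each distance is one of these two metrics, and every metric ball of $d^M_{R-ET}$ is the union of the elliptical piece of $\{\rho_1<r\}$ sitting over the first region and the diamond-shaped piece of $\{\rho_2<r\}$ sitting over the second. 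The case of $N$ is identical with $d_{\mathbb{S}^1}$ in place of $|\cdot|$.

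It then remains to verify that $d^M_{R-ET}$ and $d^N_{R-ET}$ are honest metrics. Symmetry is clear since $g$ depends only on $a$ and $L$, both symmetric, and positivity follows from $d^M_{R-ET}\ge\rho_2>0$ off the diagonal. For the triangle inequality I would produce an explicit competitor: given $x_1,x_2,x_3$, let $\Theta_{12}$ and $\Theta_{23}$ attain the minima in the definition for $(x_1,x_2)$ and $(x_2,x_3)$, write $a_{ij}$ and $L_{ij}=d_\Sigma(\theta_i,\theta_j)$ for the corresponding quantities, and feed $\Theta_{13}:=\min\{\Theta_{12}+\Theta_{23},\,L_{13}\}\in[0,L_{13}]$ into the definition for $(x_1,x_3)$. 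Since $a_{13}\le a_{12}+a_{23}$ and $0\le\Theta_{13}\le\Theta_{12}+\Theta_{23}$, Minkowski's inequality for the Euclidean norm applied to the vectors $(a_{12},R\Theta_{12})$ and $(a_{23},R\Theta_{23})$ gives
\be
\sqrt{a_{13}^2+R^2\Theta_{13}^2}\ \le\ \sqrt{(a_{12}+a_{23})^2+R^2(\Theta_{12}+\Theta_{23})^2}\ \le\ \sqrt{a_{12}^2+R^2\Theta_{12}^2}+\sqrt{a_{23}^2+R^2\Theta_{23}^2},
\ee
while $L_{13}-\Theta_{13}\le(L_{12}-\Theta_{12})+(L_{23}-\Theta_{23})$ (if $\Theta_{13}=\Theta_{12}+\Theta_{23}$ this is just $L_{13}\le L_{12}+L_{23}$; if $\Theta_{13}=L_{13}$ the left side vanishes and the right side is nonnegative). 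Adding the two inequalities gives $\sqrt{a_{13}^2+R^2\Theta_{13}^2}+L_{13}-\Theta_{13}\le d^M_{R-ET}(x_1,x_2)+d^M_{R-ET}(x_2,x_3)$, and since the left side is $\ge d^M_{R-ET}(x_1,x_3)$ by definition, the triangle inequality follows; the argument for $N$ uses only the triangle inequality for $d_{\mathbb{S}^1}$.

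Finally, the two-case formula yields $c(R)\sqrt{a^2+L^2}\le d^M_{R-ET}\le R\sqrt{a^2+L^2}$ for a constant $c(R)>0$, so $d^M_{R-ET}$ is biLipschitz to the isometric product metric $\sqrt{|s_1-s_2|^2+d_\Sigma(\theta_1,\theta_2)^2}$ and hence $(M,d^M_{R-ET})$ is compact. The optimal parameter ($\Theta^\ast$, or $\Theta=L$ in the first case) produces an explicit shortest curve: along a unit-speed minimizing $\Sigma$-geodesic from $\theta_1$ to $\theta_2$, let the initial sub-arc of length $\Theta^\ast$ be traversed while $s$ moves linearly from $s_1$ to $s_2$ (a straight segment of $R$-stretched Euclidean length $\sqrt{a^2+R^2(\Theta^\ast)^2}$), and let the remaining sub-arc be traversed with $s$ held equal to $s_2$ (length $L-\Theta^\ast$). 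A direct computation from the two-case formula shows that every point on this curve lies metrically between $x_1$ and $x_2$, so every pair $x_1,x_2$ has a metric midpoint; since a complete metric space in which every pair of points admits a midpoint is a geodesic, hence a length, metric, this finishes the proof. The one genuinely delicate step is the triangle inequality, and the point that makes it work is the choice of competitor $\Theta_{13}=\min\{\Theta_{12}+\Theta_{23},L_{13}\}$ together with Minkowski's inequality; everything else is bookkeeping around the convexity of $g$.
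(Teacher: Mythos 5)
Your derivation of the two-case formula is essentially the paper's: both arguments locate the critical point $\Theta_0=|s_1-s_2|/(R\sqrt{R^2-1})$ of $\Theta\mapsto\sqrt{|s_1-s_2|^2+R^2\Theta^2}+d_\Sigma(\theta_1,\theta_2)-\Theta$ and split according to whether it lies beyond $d_\Sigma(\theta_1,\theta_2)$, so \eqref{near-ET-2} and \eqref{near-ET-3} come out the same way (your convexity remark and the degenerate case $s_1=s_2$ are harmless additions). Where you genuinely diverge is in the verification of the metric and length-metric properties, and your route is the more robust one. The paper identifies $d^M_{R-ET}$ with the pointwise minimum of the stretched Euclidean metric $\rho_1$ of \eqref{near-ET-2} and the stretched taxi metric $\rho_2$ of \eqref{near-ET-3}, writes each as an infimum of curve lengths, and gets the triangle inequality by representing $d^M_{R-ET}$ as the infimum over curves of the smaller of the two length functionals. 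But your own identity $\rho_1^2-\rho_2^2=(R^2-1)\bigl(\Theta_0-d_\Sigma(\theta_1,\theta_2)\bigr)^2\ge 0$ shows $\rho_2\le\rho_1$ everywhere, so the pointwise minimum of the two is just $\rho_2$, which disagrees with $d^M_{R-ET}$ throughout the regime \eqref{near-ET} (there $d^M_{R-ET}=\rho_1>\rho_2$ except on the interface); taken literally, the paper's shortcut therefore does not yield the triangle inequality for $d^M_{R-ET}$, and your more careful phrasing (``each distance is one of the two metrics,'' balls made of an elliptical piece over one region and a diamond piece over the other) is the accurate statement of what the case analysis gives. Your replacement arguments are correct: the triangle inequality follows from the competitor $\Theta_{13}=\min\{\Theta_{12}+\Theta_{23},L_{13}\}$, Minkowski's inequality, and the triangle inequalities for $|s_i-s_j|$ (or $d_{\mathbb{S}^1}$) and $d_\Sigma$, with the two cases for $\Theta_{13}$ handled exactly as you do; and the length property follows from your explicit broken curve, betweenness along it (the ``direct computation'' you omit does check out, since each sub-pair along the diagonal piece sits at or beyond the critical ratio, so the two-case formula makes the distances add), midpoints, and completeness via the biLipschitz comparison with the product metric. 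In short, you trade the paper's quick identification with a minimum of two length metrics for a slightly longer but watertight direct verification, which is a genuine improvement on the paper's own treatment of this step.
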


\begin{figure} [h]
\centering
\includegraphics[width=3in]{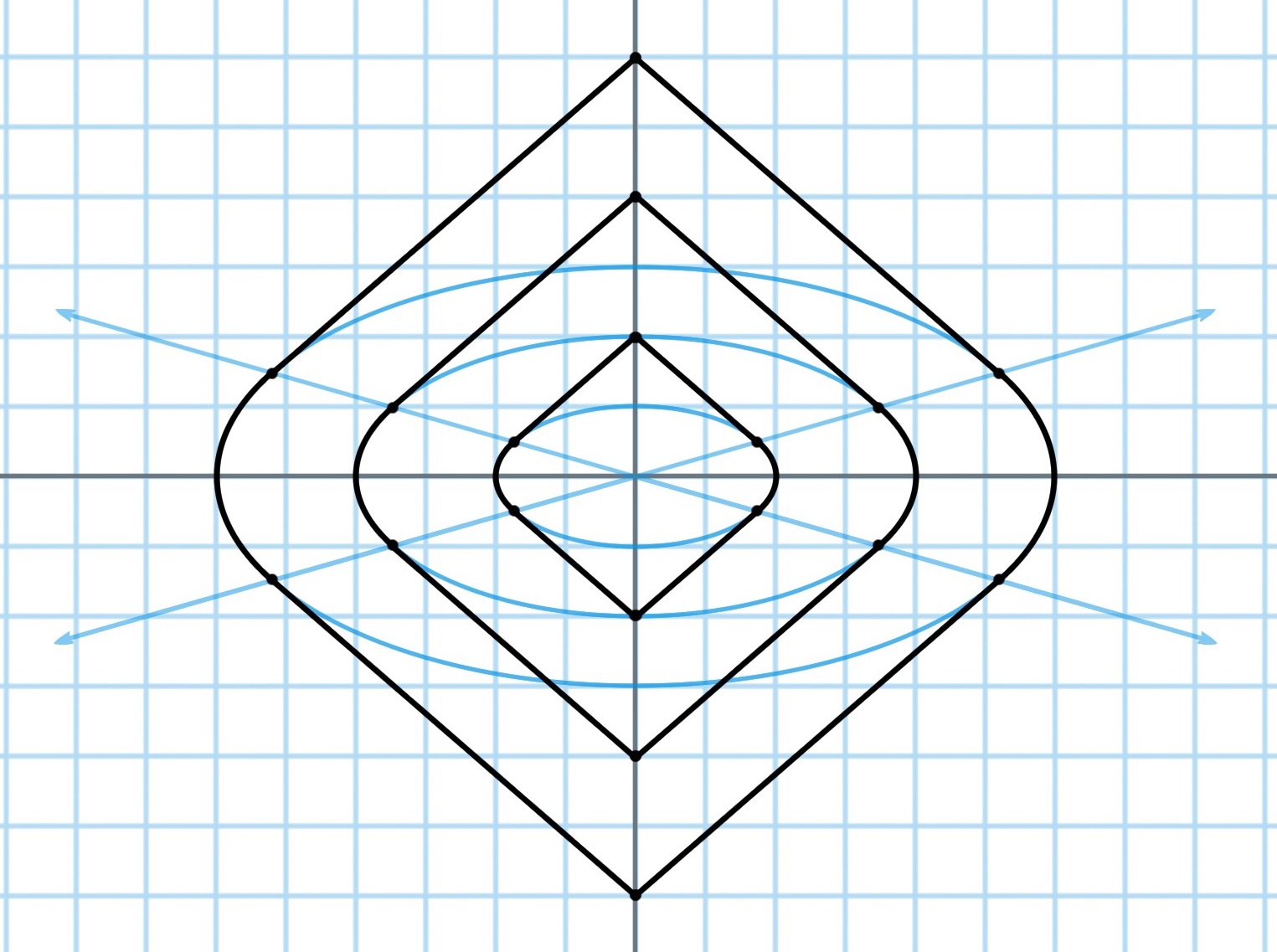}
\caption{The concentric balls of radius $r=2$, $4$, and $6$ in an $R-ET$ space with
$R=2$ are unions of diamonds, $|s|+\tfrac{\sqrt{3}}{2}|\theta|<r$, and ellipses,
$s^2+2\theta^2<r^2$.  }
\label{R-ET-balls}
\end{figure} 

\begin{proof}
To locate the minimum in the definition of the ET metric, we take the derivative 
\be
\frac{d}{d\Theta} \sqrt{|s_1-s_2|^2 + R^2\Theta^2} + d_\Sigma(\theta_1, \theta_2)-\Theta=
\qquad \qquad\qquad\qquad
\ee
\be
=(1/2) (|s_1-s_2|^2 + R^2\Theta^2)^{-1/2}(2R^2 \Theta) -1.
\ee
This derivative is negative at $\Theta=0$ so the minimum
is not achieved by the taxi product metric.  The derivative becomes $0$
at 
\be \label{Theta-0}
\Theta_0=\frac{|s_1-s_2|}{R\sqrt{R^2-1}}
\ee
and is then positive for $\Theta>\Theta_0$.   If (\ref{near-ET}) holds then 
$\Theta_0$ does not lie in $(0,d_\Sigma(\theta_1, \theta_2))$, so the
minimum is achieved at 
$
\Theta=d_\Sigma(\theta_1, \theta_2)
$
and we have (\ref{near-ET-2}).

Otherwise, the minimum is achieved at $\Theta_0$.  Since
\be
R^2\Theta_0^2= |s_1-s_2|^2 / (R^2-1) \textrm{ and } 1+ (1/(R^2-1))= R^2/(R^2-1)
\ee
we have
\begin{eqnarray}
\qquad d^M_{R-ET}((s_1, \theta_1),(s_2,\theta_2))&\le&  \sqrt{|s_1-s_2|^2 + R^2\Theta_0^2} + d_\Sigma(\theta_1, \theta_2)-\Theta_0 \\
&=&\frac{|s_1-s_2|\cdot |R|}{ \sqrt{R^2-1} } + d_\Sigma(\theta_1, \theta_2) 
     - \frac{|s_1-s_2|}{R\sqrt{R^2-1}}  \\
&=&\frac{|s_1-s_2| (R^2-1)}{ R\sqrt{R^2-1} }     + d_\Sigma(\theta_1, \theta_2) \\
&=& |s_1-s_2| \frac{\sqrt{R^2-1}}{R} + d_\Sigma(\theta_1, \theta_2). 
\end{eqnarray}
Thus we have (\ref{near-ET-3}).  

We also see that $d^M_{R-ET}((s_1, \theta_1),(s_2,\theta_2))$ is the minimum
of the two metrics in (\ref{near-ET-2}) and (\ref{near-ET-3}).
We know that both these metrics are length metrics. Indeed the metric in
(\ref{near-ET-2}) is the infimum of the lengths of curves, $C(t)=(s(t), \theta(t))$
where
\be
L_E(C) = \int_0^1 \sqrt{ s'(t)^2 + R^2 g_\Sigma(\theta'(t), \theta'(t))} \, dt
\ee 
and the metric in
(\ref{near-ET-3}) is the infimum of the lengths of curves, $C(t)=(s(t), \theta(t))$
where
\be\label{near-ET-4}
L_T(C)= \int_0^1 |s'(t)| \frac{\sqrt{R^2-1}}{|R|} + g_\Sigma(\theta'(t), \theta'(t))^{1/2} \, dt.
\ee
Thus
\be
d^M_{R-ET}(x_1, x_2) = \min\{ \inf_C L_E(C), \inf_C L_T(C) \}
= \inf_C L_{R-ET}(C)
\ee
where $L_{R-ET}(C)=\min\{L_E(C), L_T(C) \}$.  Thus we have positivity and symmetry
(which was easy to see) and now the triangle inequality as well (which was not).
\end{proof}

We now present our example: a sequence of warped product spaces with
increasingly many cinches which converges in the uniform, GH and $\mathcal{F}$
sense to a produce space with a minimized R-stretched Euclidean taxi metric.
Here we have $R=5$, but we could easily construct similar sequences converging to
any $R-ET$ metric with $R>1$.

\begin{ex} \label{to-R-ET}
Let
\begin{eqnarray}
 S&=&\left\{s_{i,j}=-\pi + \tfrac{2\pi i}{2^j}\,: \,  i=1,2,... (2^j-1),\, j\in \mathbb{N}\right\}\\
&=& \left\{-\pi + \tfrac{2\pi}{2},-\pi+\tfrac{2\pi}{4}, -\pi + \tfrac{2\pi 2}{4}, -\pi + \tfrac{2\pi3}{4}, 
-\pi + \tfrac{2\pi}{8},...\right\}
 \end{eqnarray}
 which is dense in $[-\pi,\pi]$
 and
 \be
 \{\delta_j=(1/2)^{2j}:\, j \in \mathbb{N}\} =\{1/4, 1/16, 1/32,....\} 
 \ee
 Define the functions $f_j$ as in Figure~\ref{fig-to-R-ET} as follows
  \be
 f_j(r)=
 \begin{cases}
  h((r-s_{i,j})/\delta_j ) & r\in [s_{i,j}-\delta_j, s_{i,j} +\delta_j] \textrm{ for } i =1..2^j-1
 \\ 5 & \textrm{ elsewhere }
 \end{cases}
\ee
where $h$ is a smooth even function such that 
$h(-1)=5$ with $h'(-1)=0$, 
decreasing down to $h(0)=1$ and then
increasing back up to $h(1)=5$ with $h'(1)=0$. 

Then $f_j(r)\ge 1$ converges pointwise
to 1 on the dense set, $S$.

If we define $M_j$ and $N_j$ as in (\ref{MorN}) 
then they do not converge to isometric products with warping function $1$.
Instead they converge in the GH and $\mathcal{F}$ sense 
to a product manifold with a $R-ET$ metric with $R=5$.
\end{ex}

\begin{figure} [h]
\centering
\includegraphics[width=5in]{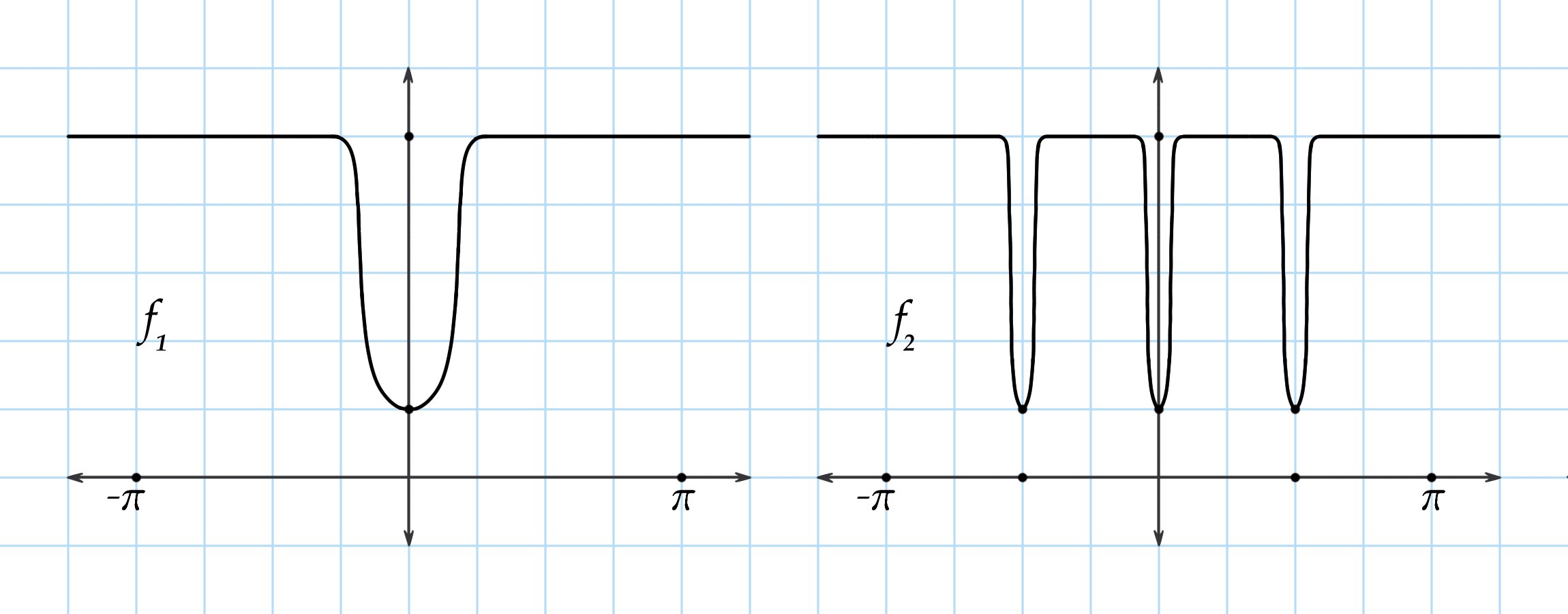}
\caption{The warping functions of Example~\ref{to-R-ET}.}
\label{fig-to-R-ET}
\end{figure}

\begin{proof}
First we check that $f_j \rightarrow 5$ in $L^p$ by using the fact that $|f_j-5|^p \le 4^p$ 
\begin{eqnarray}
\left (\int_{-\pi}^{\pi}|f_j-5|^p dr \right) ^{1/p}
&=& \left(\sum_{i=1}^{2^j-1} \int_{s_{i,j}-\delta_j}^{s_{i,j}+\delta_j}|f_j-5|^p dr \right) ^{1/p} \\
&\le& \left((2^j-1)(2\delta_j)4^p\right)^{1/p} \\
&=& 4\left((2^j-1)(1/2)^{2j}\right)^{1/p}\rightarrow 0.
\end{eqnarray}

Now observe that since 
\begin{align}\label{WarpIneq}
1 \le f_j(r) \le 5 \qquad  \forall r\in [-\pi,\pi]
\end{align} 
we have
\begin{eqnarray}\label{for-HLS-3j}
d_1(p,q) \le d_j(p,q) \le 5 d_1(p,q),
\end{eqnarray}
where $d_1$ is the warped product metric with warping function $1$.
Thus by \cite{HLS},
a subsequence of the warped product manifolds converges in
the uniform, GH and intrinsic flat sense to some limit metric space
with limit metric $d_\infty$
\be \label{for-HLS-3infty}
d_1(p,q) \le d_\infty(p,q) \le 5 d_1(p,q) \qquad \forall p,q.
\ee
  We will show that the pointwise limit of
the $d_j$ is $d_{5-ET}$, thus
proving that the original sequence of warped product manifolds converges
in the uniform, GH and intrinsic flat sense to the Euclidean/taxi space.

Let us consider an arbitrary pair of points, $x_i=(s_i, \theta_i)$.  
If $\theta_1=\theta_2$ then
\be
d_j(x_1, x_2) = |s_1-s_2|= d_{5-ET}(s_1, s_2).
\ee

In general, if $\theta_1\neq \theta_2$ let $s'_{i,j} \in f_j^{-1}(1)$ with 
\be\label{s'ij}
|s'_{i,j}-s_i|<2\pi/2^j
\ee
\be
x'_{i,j}=(s'_{i,j}, \theta_i). 
\ee  
By the triangle inequality applied two ways we have
\begin{eqnarray}\label{use-twice}
|d_j(x_1,x_2) - d_j(x_{1,j}', x_{2,j}')|&\le& d_j(x_1, x'_{1,j})+ d_j(x_{2,j}', x_2)\\
&\le & |s_1-s'_{1,j}| + |s'_{2,j}-s_2|  < 4\pi/2^j
\end{eqnarray}
and 
\begin{eqnarray}\label{use-twice-taxi}
& & |d_{5-ET}(x_1,x_2) - d_{5-ET}(x_{1,j}', x_{2,j}')|
\\&\qquad & \le d_{5-ET}(x_1, x'_{1,j})+ d_{5-ET}(x_{2,j}', x_2)\\
&\qquad & \le  |s_1-s'_{1,j}| + |s'_{2,j}-s_2|  < 4\pi/2^j  
\end{eqnarray}

Recall that to complete the proof we must prove the pointwise limit:
\be
\lim_{j\to \infty} d_j(x_1, x_2) = d_{5-ET}(x_1,x_2).
\ee
By (\ref{use-twice}) we need only show
\be
\lim_{j\to \infty} d_j(x_{1,j}', x_{2,j}') = d_{5-ET}(x_1,x_2).
\ee

Applying the triangle inequality again, with $x_{1,\theta,j}=(s'_{1,j}, \theta)$ where $\theta \in \Sigma$ so that $d_{\Sigma}(\theta_2,\theta) \in [0,d_{\Sigma}(\theta_1,\theta_2)]$,
we have
\begin{eqnarray}
d_j(x_{1,j}', x_{2,j}')&\le& d_j(x_{1,j}', x_{1,\theta,j})+ d_j(x_{1,\theta,j}, x_{2,j}')\\
&\le & d_\Sigma(\theta_1, \theta) + \sqrt{|s_{1,j}'-s_{2,j}'|^2 + 25d_{\Sigma}(\theta_2,\theta)^2},
\end{eqnarray}
where we have used \eqref{WarpIneq} in the last line.
Since this is true for any $\theta \in \Sigma$ so that $d_{\Sigma}(\theta_2,\theta) \in [0,d_{\Sigma}(\theta_1,\theta_2)]$ we find
\begin{eqnarray}
d_j(x_{1,j}', x_{2,j}')&\le& d_{5-ET}(x'_{1,j},x'_{2,j}).
\end{eqnarray}
Thus taking the limsup and applying (\ref{use-twice-taxi}) we have
\be \label{limsupdtaxi}
\limsup_{j\to \infty} d_j(x_{1,j}', x_{2,j}')
\le \limsup_{j\to \infty} d_{5-ET}(x'_{1,j},x'_{2,j})=d_{5-ET}(x_1,x_2).
\ee

So now we need only show
\be\label{NeedOnlyShow}
\liminf_{j\to \infty} d_j(x_{1,j}', x_{2,j}') \ge d_{5-ET}(x_1,x_2).
\ee
By (\ref{use-twice-taxi}) we need only show 
\be \label{NeedOnlyShow2}
\liminf_{j\to \infty} \left(d_j(x_{1,j}', x_{2,j}')- d_{5-ET}(x_{1,j}',x_{2,j}') \right)\ge 0. 
\ee

If $s'_{1,j}=s'_{2,j}$ then
\begin{eqnarray}\label{s'1j=s'2j}
d_j(x_{1,j}', x_{2,j}')&\ge & d_\Sigma(\theta_1, \theta_2) =d_{5-ET}(x'_{1,j},x'_{2,j}).
\end{eqnarray}

If $s_{1,j}'\neq s_{2,j}'$, then the $L_j$ shortest path, $C_j(t)=(r(t), \theta(t))$, 
from $x_{1,j}'$ to $x_{2,j}'$ must pass
from one valley over to the other, possibly passing through many valleys in
between.  Observe that
\be\label{two-parts}
d_j(x_{1,j}', x_{2,j}')=L_j(C_j) = L_j( C_j \cap f^{-1}(5)) + L_j(C_j \setminus f^{-1}(5)).
\ee

The segments of $C_j$ which intersect $f_j^{-1}(5)$ lie in an
product space warped by the constant function $5$ so
\be\label{RjThetaj}
L_j( C_j \cap f^{-1}(5))= \sqrt{R_j^2 +25 \Theta_j^2}
\ee
where $R_j$ is the sum of changes in $r$ on these segments and
where $\Theta_j$ is the sum of distances in $\Sigma$ between the theta values of
the endpoints of these segments.   

Let $R_0=|s_1-s_2|$ which is the total change in $r$ along $C_j$.   By the definition of $\delta_j$, 
\be
2^j \delta_j=2^j (1/2^{2j}) \to 0.
\ee
Since we have at most $2^j$ intervals where $f_j<5$,
we see that as 
\be\label{RjtoR0}
\lim_{j\to \infty} R_0-R_j = 0.
\ee
So the total change in $r$ for the segments in $C_j \setminus f^{-1}(5)$ is
converging to $0$.  

Let $\Theta_0=d_{\Sigma}(\theta_1, \theta_2)$.  Then $\Theta_0-\Theta_j$ is
the sum of distances in $\Sigma$ between the theta values of
the endpoints of the segments in $C_j \setminus f^{-1}(5)$.  Since the
warping factors $f_j(r)\ge 1$ everywhere, the distance between the 
endpoints of each segment is $\ge$ distance in $\Sigma$ between the theta values of
the endpoints of the segment.  Thus 
\be \label{Theta-0j}
 L_j(C_j \setminus f^{-1}(5))\ge \Theta_0-\Theta_j.   
 \ee

Combining this together with (\ref{two-parts}) and (\ref{RjThetaj})we have
\begin{align} \label{sqrt-diff}
d_j(x_{1,j}', x_{2,j}')=L_j(C_j) &\ge \sqrt{R_j^2 + 25 \Theta_j^2} + \Theta_0-\Theta_j 
\\&\ge \inf_{\Theta \in [0,d_{\Sigma}(\theta_1,\theta_2)]} \sqrt{R_j^2 + 25 \Theta^2} + \Theta_0-\Theta 
\end{align}
Since
\begin{align} \label{LimtToET}
\lim_{j \rightarrow \infty}\left( \inf_{\Theta \in [0,d_{\Sigma}(\theta_1,\theta_2)]} \sqrt{R_j^2 + 25 \Theta^2} + \Theta_0-\Theta \right )= \lim_{j \rightarrow \infty}d_{5-ET}(x_{1,j}', x_{2,j}')
\end{align}
 we are done by combining \eqref{sqrt-diff} and \eqref{LimtToET} which shows \eqref{NeedOnlyShow2}.
\end{proof}

\begin{rmrk}\label{RemScalarCompactness}
 If we take the isometric product of Example \ref{to-R-ET} with a standard circle, $\bar{N}_j^3 = N_j^2 \times \mathbb{S}^1$, $\Sigma=\mathbb{S}^1$, then we have a sequence of 3-manifolds satisfying all the hypotheses of the scalar compactness conjecture of Gromov-Sormani \cite{IAS} (recently proved in the rotationally symmetric case by Park-Tian-Wang \cite{PTW})
 \begin{align}
Vol(\bar{N}_j) &\le 5 Vol(\mathbb{T}^3)
\\ diam(\bar{N}_j) &\le 5 diam(\mathbb{T}^3)
\\minA(\bar{N}_j) &\ge minA(\mathbb{T}^3)
\end{align}
except for the the scalar curvature bound. Therefore, this example demonstrates that the conclusion of the scalar compactness conjecture, that the SWIF limit have Euclidean tangent cones almost everywhere, requires the scalar curvature bound. We note that the volume and diameter bound follow since $f_j \le 5$ and the minA bound follows since $f_j \ge 1$.
\end{rmrk}

\section{Proof of the Main Theorem}\label{sect-WarProd}

The goal of this section is to prove our main theorem, Theorem~\ref{WarpConv}. 

In this theorem,
$M_j=[r_0,r_1]\times_{f_j} \Sigma$ where $\Sigma$ is an $n-1$ dimensional manifold including also $M_j$ without boundary that have $f_j$ periodic with period $r_1-r_0$ as in \eqref{MorN}.  We assume that the warping factors, $f_j\in C^0([r_0,r_1])$, satisfy the following:
  \be
  0 < f_{\infty} - \frac{1}{j} \le f_j(r) \le K 
  \ee
and
  \be
  f_j(r) \rightarrow f_{\infty}(r) \textrm{ in }L^2
  \ee
  where $f_\infty\in C^0([r_0,r_1])$.

The proof of Theorem~\ref{WarpConv} proceeds as follows.  In Lemma \ref{distLowerBound} we use the $C^0$ lower bound to show that 
\be
\liminf_{j \rightarrow \infty} d_{j}(p,q) \ge d_{\infty}(p,q) \textrm{ pointwise.}
\ee 
We use the $L^2$ convergence of $f_j \rightarrow f_{\infty}$ in Lemma \ref{warp-L} and Lemma \ref{ConstRLengthBound}, combined with the estimate of Lemma \ref{Theta(C_j)-WEst}, to show that the lengths of fixed curves with respect to $M_j$  and $M_{\infty}$ converge. We apply this result to a fixed geodesic with respect to $g_{\infty}$, to prove that
\be 
\limsup_{j \rightarrow \infty} d_{j}(p,q) \le d_{\infty}(p,q) \textrm{ pointwise.}
\ee 
Thus in Proposition~\ref{prop-ptwise} we have the pointwise limit
\be
 \lim_{j \rightarrow \infty} d_j(p,q) = d_{\infty}(p,q).
 \ee
 To complete the proof of uniform, GH and SWIF convergence using Theorem \ref{HLS-thm}, as is done in the examples in section \ref{Sect_Ex}, we need uniform bounds
 on $d_j$ proven in Lemma~\ref{biLip}.

\subsection{Assuming a $C^0$ lower bound}

We have seen in Section~\ref{Sect_Ex} that in order to get Gromov-Hausdorff convergence to agree with $L^2$ convergence we will need a $C^0$ lower bound on $f_j$ and so now we see the consequence of this assumption for the distance between points. 

\begin{lem}\label{distLowerBound} Let $p,q \in [r_0,r_1]\times \Sigma$ and assume that $f_{j}(r) \ge f_{\infty} - \frac{1}{j} > 0$, $diam(M_j) \le D$ then
\begin{align}
\liminf_{j \rightarrow \infty} d_{j}(p,q) \ge d_{\infty}(p,q)
\end{align}
and furthermore we find the uniform estimate
 \be
  d_{g_j}(p,q)-d_{g_{\infty}}(p,q) \ge  -\frac{\sqrt{2}\max_{[r_0,r_1]}\sqrt{f_{\infty}} D}{\min_{[r_0,r_1]}f_{j}(r)\sqrt{j}}.
\ee
\end{lem}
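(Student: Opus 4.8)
The plan is to bound $d_\infty(p,q)$ from above by the $g_\infty$-length of a $d_j$-minimizing curve, and to show that switching from $g_j$ to $g_\infty$ along such a curve costs at most $O(1/\sqrt{j})$. Assume $p\neq q$, since otherwise the estimate is trivial. By \remref{s-limit-geods} there is an absolutely continuous curve $C_j$ from $p$ to $q$ with $L_{g_j}(C_j)=d_j(p,q)$, and by that same remark it may be reparametrized to unit $g_j$-speed; thus we obtain $C_j:[0,\ell_j]\to[r_0,r_1]\times\Sigma$ with $\ell_j=d_j(p,q)\le\operatorname{diam}(M_j)\le D$ and $a(t)^2+f_j(r(t))^2 b(t)^2=1$ for a.e.\ $t$, where $a=|r'|$ and $b=\sigma(\theta',\theta')^{1/2}$. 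The key consequence of unit speed is the a.e.\ bound $f_j(r(t))\,b(t)\le 1$, hence $b(t)\le 1/\min_{[r_0,r_1]}f_j$.

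Next I would record the pointwise a.e.\ comparison
\be
\sqrt{a^2+f_\infty(r)^2 b^2}-\sqrt{a^2+f_j(r)^2 b^2}\ \le\ b\,\sqrt{\bigl(f_\infty(r)^2-f_j(r)^2\bigr)_+}\ \le\ \frac{\sqrt2\,\max_{[r_0,r_1]}\sqrt{f_\infty}}{\sqrt{j}\,\min_{[r_0,r_1]}f_j},
\ee
where the first inequality is subadditivity of $\sqrt{\cdot}$ (and is vacuous when $f_j(r)\ge f_\infty(r)$, the left side then being $\le 0$), while the second uses that on the set $\{f_\infty>f_j\}$ one has $f_\infty-f_j\le 1/j$ by hypothesis, $f_\infty+f_j\le 2\max_{[r_0,r_1]}f_\infty$, together with $b\le 1/\min_{[r_0,r_1]}f_j$. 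Integrating over $[0,\ell_j]$ and using $\ell_j\le D$ gives
\be
d_\infty(p,q)\ \le\ L_{g_\infty}(C_j)\ \le\ L_{g_j}(C_j)+\frac{\sqrt2\,\max_{[r_0,r_1]}\sqrt{f_\infty}\,D}{\sqrt{j}\,\min_{[r_0,r_1]}f_j}\ =\ d_j(p,q)+\frac{\sqrt2\,\max_{[r_0,r_1]}\sqrt{f_\infty}\,D}{\sqrt{j}\,\min_{[r_0,r_1]}f_j},
\ee
which is exactly the claimed uniform estimate after rearranging. Letting $j\to\infty$ and noting that $\min_{[r_0,r_1]}f_j\ge\min_{[r_0,r_1]}f_\infty-\tfrac1j\to\min_{[r_0,r_1]}f_\infty>0$ (here using that $f_\infty\in C^0([r_0,r_1])$ is positive), the right-hand side tends to $0$, so $\liminf_{j\to\infty}d_j(p,q)\ge d_\infty(p,q)$.

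I expect no serious obstacle: the proof is essentially the two square-root manipulations above. The two points requiring care are (i) ensuring the comparison curve $C_j$ has length controlled by $D$ — this is precisely where the diameter hypothesis enters, and it is what upgrades the $\liminf$ statement to a uniform (in $p,q$) estimate — and (ii) keeping track of the sign of $f_\infty-f_j$ so that the positive-part truncation is legitimate. The one mildly non-obvious choice is to estimate $\sqrt{a^2+f_\infty^2 b^2}-\sqrt{a^2+f_j^2 b^2}$ by subadditivity of the square root rather than by a mean-value expansion of the difference; this is what yields the sharp $1/\sqrt{j}$ rate rather than a (still sufficient) $1/j$.
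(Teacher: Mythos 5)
Your proof is correct and follows essentially the same route as the paper: both take the unit-speed $g_j$-minimizing curve, use $|\theta'|\le 1/\min f_j$ from the unit-speed condition together with the diameter bound $L_j(C_j)\le D$, and control the difference of the $g_\infty$- and $g_j$-integrands via square-root subadditivity and the hypothesis $f_j\ge f_\infty-\tfrac1j$, arriving at the identical constant. The only cosmetic difference is that you bound $L_{g_\infty}(C_j)$ from above while the paper bounds $L_{g_j}(C_j)$ from below after substituting $f_\infty-\tfrac1j$ for $f_j$; these are the same estimate rearranged.
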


\begin{proof}
Let $C_j(t)=(r_j(t),\theta_j(t))$ be the absolutely continuous curve in $M_j$, parameterized so that $|C_j|_{g_j}=1$ a.e., realizing the distance between $p$ and $q$.  Then compute
\begin{align}
 d_{g_j}(p,q) &= \int_0^{L_j(C_j)} \sqrt{r_j(t)^2+f_j(r_j(t))^2 |\theta_j'(t)|^2}dt
 \\&\ge \int_0^{L_j(C_j)} \sqrt{r_j(t)^2+(f_{\infty}(r_j(t)) - \tfrac{1}{j})^2 |\theta_j'(t)|^2}dt
  \\&= \int_0^{L_j(C_j)} \sqrt{r_j(t)^2+f_{\infty}(r_j(t))^2|\theta_j'(t)|^2 -  \left(\tfrac{2}{j}f_{\infty}(r_j(t))|\theta_j'(t)|^2 - \tfrac{1}{j^2} |\theta_j'(t)|^2\right )}dt\label{posIntegrand}
  \end{align}
  Now we use the inequality $\sqrt{|a-b|}\ge |\sqrt{a}-\sqrt{b}|\ge \sqrt{a}-\sqrt{b}$ in succession, employing the fact that the integrand in \eqref{posIntegrand} is positive and the square roots that follow are of positive quantities by the assumptions of the lemma.
  \begin{align}
   d_{g_j}(p,q)&\ge \int_0^{L_j(C_j)}\left| \sqrt{r_j(t)^2+f_{\infty}(r_j(t))^2|\theta_j'(t)|^2} - \tfrac{1}{\sqrt{j}}|\theta_j'(t)|\sqrt{\left(2f_{\infty}(r_j(t)) - \tfrac{1}{j} \right )}\right|dt
  \\&\ge \int_0^{L_j(C_j)} \sqrt{r_j(t)^2+f_{\infty}(r_j(t))^2|\theta_j'(t)|^2}dt 
  \\&\qquad- \tfrac{1}{\sqrt{j}}\int_0^{L_j(C_j)}|\theta_j'(t)|\sqrt{\left(2f_{\infty}(r_j(t)) - \tfrac{1}{j} \right )}dt
  \\&\ge L_{g_{\infty}}(C_j)-\tfrac{1}{\sqrt{j}}\int_0^{L_j(C_j)}|\theta_j'(t)|\sqrt{\left(2f_{\infty}(r_j(t)) - \tfrac{1}{j} \right )}dt
 \end{align} 
 
Now we notice that 
\begin{align}
&\sqrt{f'_j(t)^2+f_j(r_j(t))^2|\theta_j'(t)|^2} = 1 \text{ a.e.}
\\&\Rightarrow |\theta_j'(t)| \le \frac{1}{\min f_j} \text{ a.e.}
\end{align}
 which allows us to compute
 \be
  d_{g_j}(p,q) \ge d_{g_{\infty}}(p,q) -\frac{\sqrt{2}\max_{[r_0,r_1]}\sqrt{f_{\infty}} D}{\min_{[r_0,r_1]}f_{j}(r)\sqrt{j}},
\ee
where the diameter bound from the hypotheses is used to conclude that $L_j(C_j) \le D$. The desired result follows by taking limits.
 
\end{proof}

\subsection{$L^2$ convergence and convergence of lengths}
In this section we would like to observe the consequence of $L^2$ convergence of $f_j \rightarrow f_{\infty}$ for convergence of lengths of curves and distances between points
in $M_j$ culminating in an estimate on the pointwise limsup of the 
distance functions [Proposition~\ref{prop-limsup}]. 

We start by proving we have uniform bounds on the diameter:

 \begin{lem} \label{lem-diam} 
 If $\|f_j-f_{\infty}\|_{L^2} \le \delta_j$ and $M_j$ are warped products as in \ref{MorN}  
 then
 \be
   \diam(M_j) \le 2|r_1-r_0| 
  + \left( \|f_\infty\|_{C_0} + \tfrac{\delta_j}{\sqrt{r_1-r_0\,} }\right) \diam(\Sigma)
  \ee
  \end{lem}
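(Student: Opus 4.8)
The plan is to bound the distance between an arbitrary pair of points $p=(a,\theta_1)$ and $q=(b,\theta_2)$ in $M_j$ by exhibiting an explicit test curve and estimating its length. The natural candidate is the three-segment ``L-shaped'' curve already used in Lemma~\ref{distEst-L}: go vertically (constant $\theta$) from $r=a$ to some intermediate radius $\hat r$, then move along $\Sigma$ via a minimizing geodesic $\alpha$ at the fixed level $r=\hat r$, then go vertically from $\hat r$ to $r=b$. The total $r$-variation of such a curve is at most $2|r_1-r_0|$, which accounts for the first term; the remaining task is to choose $\hat r$ so that $f_j(\hat r)$ is controlled, and this is where the $L^2$ hypothesis enters.

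First I would note that since $\|f_j-f_\infty\|_{L^2}\le\delta_j$ on an interval of length $r_1-r_0$, the mean value theorem for integrals (or Chebyshev) gives the existence of a radius $\hat r\in[r_0,r_1]$ with
\be
|f_j(\hat r)-f_\infty(\hat r)| \le \frac{\|f_j-f_\infty\|_{L^2}}{\sqrt{r_1-r_0\,}} \le \frac{\delta_j}{\sqrt{r_1-r_0\,}},
\ee
since the $L^2$ average bounds the essential infimum of $|f_j-f_\infty|$ (continuity of $f_j,f_\infty$ makes this a genuine pointwise statement). Hence
\be
f_j(\hat r) \le f_\infty(\hat r) + \frac{\delta_j}{\sqrt{r_1-r_0\,}} \le \|f_\infty\|_{C^0} + \frac{\delta_j}{\sqrt{r_1-r_0\,}}.
\ee
Plugging this $\hat r$ into the L-shaped test curve and using $L_\Sigma(\alpha)\le\diam(\Sigma)$, the curve has length at most $2|r_1-r_0| + \bigl(\|f_\infty\|_{C^0} + \delta_j/\sqrt{r_1-r_0\,}\bigr)\diam(\Sigma)$, which bounds $d_{g_j}(p,q)$. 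Taking the supremum over $p,q$ gives the claimed diameter bound. For the case $N_j=\mathbb{S}^1\times_{f_j}\Sigma$ the same argument works with $|r_1-r_0|$ replaced by the relevant arc-length bound, but in fact the stated bound still holds a fortiori.

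I do not anticipate a serious obstacle here; the only mild subtlety is justifying the pointwise bound $\inf_r|f_j-f_\infty|\le\|f_j-f_\infty\|_{L^2}/\sqrt{r_1-r_0\,}$ rather than merely an almost-everywhere bound, which is immediate from continuity, and keeping track of whether one wants the minimum of $f_j$ or a near-minimizing value --- since we only need an \emph{upper} bound on $f_j(\hat r)$, choosing $\hat r$ to nearly minimize $|f_j-f_\infty|$ suffices and avoids needing any lower bound on $f_j$ in this particular lemma.
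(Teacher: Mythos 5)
Your proposal is correct and follows essentially the same route as the paper: the same three-segment test curve (radial, then a $\Sigma$-geodesic at a fixed level $\hat r$, then radial), with $\hat r$ chosen by the mean-value/Chebyshev argument so that $|f_j(\hat r)-f_\infty(\hat r)|\le \delta_j/\sqrt{r_1-r_0\,}$, giving exactly the stated bound. No gaps to report.
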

  
  \begin{proof}
  Let $p,q\in M_j$.   Recall that the distance between these
  points is the infimum over lengths of all curves.  For any $r\in [r_0,r_1]$ we can take a 
  first path from $p$ radially to the level $r$, then a second path around that level $r$,
  and then a third path from that level to $q$.  The first and third paths each have
   length $\le |r_1-r_0|$, and the middle path has length bounded above by the diameter of the
   level.  Thus we have
  \begin{eqnarray}
 \qquad  d_j(p,q) &\le& 2|r_1-r_0| + f_j(r)\diam(\Sigma)  \\
  &\le & 2|r_1-r_0| + \left(\, f_\infty(r) + |f_j(r)-f_\infty(r)|\, \right) \,\diam(\Sigma).
  \end{eqnarray}
  Choosing an $r$ such that 
  \be
  |f_j(r)-f_\infty(r)|^2 \le \frac{1}{r_1-r_0} \int |f_j(s)-f_\infty(s)|^2 \, ds
  \ee
  we have
  \be
  |f_j(r)-f_\infty(r)| \le \frac{\|f_j-f_\infty\|_{L_2}}{\sqrt{r_1-r_0\,}} 
  \ee
  and  $f_\infty(r) \le \|f_\infty\|_{C_0}$.
  \end{proof}
  
  Recall that in warped product manifolds with continuous warping functions
  we have absolutely continuous curves whose length achieves the distance between
  two points [Remark~\ref{s-limit-geods}].   
  
We next consider the length of a fixed curve which is monotone in $r$. 

\begin{lem} \label{warp-L}
Fix an absolutely continuous curve $C(t)=(r(t), \theta(t))$, $t \in [0,1]$, which is monotone in $r$.
If $\|f_j-f_{\infty}\|_{L^2} \le \delta=\delta_j$ 
and $M_j$ are warped products as in \eqref{MorN}
then
\be
|L_j(C)-L_\infty(C)| 
\le \left(\delta^2 + 4\|f_\infty\|_{L^2}^2\right) \delta^{1/2} \Theta(C)
\ee
where
\be\label{eq-Theta}
 \Theta(C)=\left( \int_{r(0)}^{r(1)} |\theta'(r)|^2\, dr \right)^{1/2}.
\ee
Note also that 
\be
\|f_j+f_{\infty}\|_{L^2}^2 \le (\delta + 2\|f_\infty\|_{L^2})^2.
\ee
\end{lem}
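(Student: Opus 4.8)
The plan is to turn the comparison of lengths into a one-variable estimate using the monotonicity hypothesis, to compare the two length integrands pointwise, and then to apply Cauchy--Schwarz twice. The auxiliary inequality $\|f_j+f_\infty\|_{L^2}^2\le(\delta+2\|f_\infty\|_{L^2})^2$ is simply the triangle inequality $\|f_j+f_\infty\|_{L^2}\le\|f_j-f_\infty\|_{L^2}+2\|f_\infty\|_{L^2}\le\delta+2\|f_\infty\|_{L^2}$ followed by squaring, so I treat it as a black box in what follows.

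First I would reduce to the case that $r$ is strictly increasing on $[0,1]$: the decreasing case follows by reversing the orientation of $C$, and the lemma is only applied when $r(0)\neq r(1)$. Using that $C$ is monotone in $r$, I reparameterize so that $\theta$ becomes an absolutely continuous function of $r\in[r(0),r(1)]$, obtaining, for $g=g_j$ and for $g=g_\infty$,
\be
L_g(C)=\int_{r(0)}^{r(1)}\sqrt{1+f(r)^2\,|\theta'(r)|^2}\,dr .
\ee
This is the step I expect to require the most care: justifying the change of variables for a merely absolutely continuous curve, and confirming that for the curves to which the lemma will be applied (geodesics of $g_\infty$ between points on distinct $r$-levels) the quantity $\Theta(C)$ in \eqref{eq-Theta} is finite. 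Everything after this step is routine.

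With the integral form in hand, set $u(r)=1+f_j(r)^2|\theta'(r)|^2$ and $v(r)=1+f_\infty(r)^2|\theta'(r)|^2$, so that $|u-v|=|f_j^2-f_\infty^2|\,|\theta'|^2$. The elementary inequality $|\sqrt{u}-\sqrt{v}|\le\sqrt{|u-v|}$, integrated over $[r(0),r(1)]$, gives
\be
|L_j(C)-L_\infty(C)|\le\int_{r(0)}^{r(1)}|\theta'(r)|\,\sqrt{|f_j(r)^2-f_\infty(r)^2|}\,dr .
\ee
Applying Cauchy--Schwarz in $r$ to the factors $|\theta'(r)|$ and $\sqrt{|f_j^2-f_\infty^2|}$ bounds the right-hand side by $\Theta(C)\,\|f_j^2-f_\infty^2\|_{L^1}^{1/2}$, and a second Cauchy--Schwarz gives $\|f_j^2-f_\infty^2\|_{L^1}=\big\||f_j-f_\infty|\,|f_j+f_\infty|\big\|_{L^1}\le\|f_j-f_\infty\|_{L^2}\,\|f_j+f_\infty\|_{L^2}\le\delta\,\|f_j+f_\infty\|_{L^2}$. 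Combining this with the triangle-inequality bound on $\|f_j+f_\infty\|_{L^2}$ above and rearranging the resulting expression in $\delta$ and $\|f_\infty\|_{L^2}$ by elementary means puts the estimate in the stated form $\big(\delta^2+4\|f_\infty\|_{L^2}^2\big)\,\delta^{1/2}\,\Theta(C)$. Note that the exponents $\delta^{1/2}$ and $\Theta(C)^1$ in the conclusion arise precisely from using the crude pointwise bound $\sqrt{|u-v|}$ (rather than the sharper $|u-v|/(\sqrt u+\sqrt v)$) together with square-rooting an $L^1$ bound; a tighter argument would give a linear dependence on $\delta$, but the form above is the one convenient for the subsequent lemmas.
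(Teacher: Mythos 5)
Your proposal is correct and follows essentially the same route as the paper's own proof: reparameterize by $r$ using monotonicity, apply $|\sqrt{a}-\sqrt{b}|\le\sqrt{|a-b|}$ to the integrands, factor $|f_j^2-f_\infty^2|=|f_j-f_\infty|\,|f_j+f_\infty|$, and use H\"older twice together with the triangle inequality to bound $\|f_j+f_\infty\|_{L^2}$. The only difference is cosmetic (you flag the reparameterization of the absolutely continuous curve, which the paper passes over silently), so there is nothing to add.
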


If $C$ is not monotone in $r$ but one knows it has at most $N$
monotone subsegments then we can sum up the segments 
applying this lemma to each subsegment.

\begin{proof}
Since $C(t)=(r(t), \theta(t))$ is such that $r'(t) > 0$ everywhere then we can reparametrize 
so that $r(t)=r$.  Now by comparing two lengths and taking advantage of the inequality  $\sqrt{|a-b|}\ge |\sqrt{a}-\sqrt{b}|$ we find
\begin{align}
&|L_j(C)-L_\infty(C)| 
\\&\le \int_{r(0)}^{r(1)}\left| \sqrt{ 1 + f_j^2(r)) \theta'(r)^2 }
- \sqrt{ 1 + f_\infty^2(r) \theta'(r)^2 }\right|\, dr \\
&\le  \int_{r(0)}^{r(1)} \sqrt{  |f_j^2(r) -  f_\infty^2(r)|} |\theta'(r)|\, dr \\
&\le \left( \int_{r(0)}^{r(1)}   |f_j^2(r) -  f_\infty^2(r)|\,dr \right)^{1/2}
\left(\int_{r(0)}^{r(1)} |\theta'(r)|^2\, dr \right)^{1/2}
\end{align}
where we used Holder's inequality in the last line.

Now we notice that
\begin{align}
 |f_j^2 -  f_\infty^2| &=  |f_j^2 - f_jf_{\infty}+f_jf_{\infty} -  f_\infty^2| \\
 &= |f_j(f_j - f_{\infty})+f_{\infty}(f_j -  f_\infty)|
 \\&=|(f_j+f_{\infty})(f_j-f_{\infty})| = |f_j+f_{\infty}||f_j-f_{\infty}|.
\end{align}
Combining this with H\"older's Inequality we obtain
\begin{align}
&|L_j(C) -L_\infty(C)| \le \\
& \left( \int_{r(0)}^{r(1)}  |f_j+f_{\infty}|^2\,dr \right)^{1/4}\left( \int_{r(0)}^{r(1)}  |f_j-f_{\infty}|^2\,dr \right)^{1/4} \Theta(C).
\end{align}

Lastly, we notice that
\begin{align}
\|f_j+f_{\infty}\|_{L^2}^2&=\|f_j - f_\infty+2f_{\infty}\|_{L^2} \\
&\le ( \|f_j-f_{\infty}\|_{L^2} + 2\|f_\infty\|_{L^2})^2 \le (\delta + 2\|f_\infty\|_{L^2})^2
\end{align}
which gives us the desired uniform bound.
\end{proof}

Now that we have obtained a bound on fixed geodesics which are monotone in $r$ we would like to gain some control on the term $\Theta(C)$ from Lemma \ref{warp-L} in the case where $C$ is a fixed geodesic with respect to the metric $g_j$. We note that we will use Lemma \ref{Theta(C_j)-WEst} only in the case where $C$ is a fixed geodesic with respect to $g_{\infty}$ which is monotone in $r$ but we state it in more generality below since it could be useful for future results.

\begin{lem}\label{Theta(C_j)-WEst}
Let $M_j$ be a warped product manifold as in \ref{MorN}.
Let $C_j(t) = (r(t),\theta(t))$ be a unit speed absolutely continuous geodesic
 in $M_j$ which is non-decreasing in $r$ and define
\be
\displaystyle m_j = \min_{r \in [r_0,r_1]}f_j(r) >0.
\ee
Then $\Theta$ of (\ref{eq-Theta}) satisfies:
\be
\Theta(C_j) \le \frac{\sqrt{n-1} L_j(C_j)^{1/2}}{m_j}.
\ee
\end{lem}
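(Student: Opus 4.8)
The plan is to combine the unit-speed normalization of $C_j$ with the special structure of geodesics in a warped product. Parametrizing $C_j$ by arc length, the unit-speed identity reads $r'(t)^2 + f_j(r(t))^2\,|\theta'(t)|_\sigma^2 = 1$ for a.e.\ $t$, where $|\cdot|_\sigma$ is the norm of $(\Sigma,\sigma)$. This yields two elementary pointwise consequences that I would use repeatedly: $|\theta'(t)|_\sigma \le 1/f_j(r(t)) \le 1/m_j$, and, since $C_j$ is non-decreasing in $r$, $0 \le r'(t) \le 1$.

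Next I would rewrite $\Theta(C_j)$ in the arc-length parameter. Because $C_j$ is non-decreasing in $r$, the reparametrization by $r$ used in the proof of Lemma~\ref{warp-L} applies on the set where $r'(t)>0$ (the complementary set contributes nothing to the integral over $[r(0),r(1)]$), and there $\theta'(r)=\theta'(t)/r'(t)$. Substituting $r=r(t)$ in \eqref{eq-Theta} gives
\[
\Theta(C_j)^2 = \int_{\{t\,:\,r'(t)>0\}} \frac{|\theta'(t)|_\sigma^2}{r'(t)}\,dt
= \int_{\{t\,:\,r'(t)>0\}} \frac{1-r'(t)^2}{f_j(r(t))^2\,r'(t)}\,dt
\le \frac{1}{m_j^2}\int_0^{L_j(C_j)} \frac{1-r'(t)^2}{r'(t)}\,dt ,
\]
so the lemma reduces to the estimate $\int_0^{L_j(C_j)} (1-r'(t)^2)/r'(t)\,dt \le (n-1)\,L_j(C_j)$.

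To obtain this I would invoke that $C_j$ is a \emph{geodesic} of the warped product, not an arbitrary curve: its $\theta$-component is a reparametrized $\sigma$-geodesic and it admits the Clairaut-type first integral $f_j(r(t))^2\,|\theta'(t)|_\sigma \equiv c$ for a constant $c\ge 0$; together with unit speed this pins down $r'(t) = \sqrt{1-c^2/f_j(r(t))^2}$ and $|\theta'(t)|_\sigma = c/f_j(r(t))^2$ along $C_j$. Feeding this back in, $\Theta(C_j)^2 = \int_0^{L_j(C_j)} c^2 f_j(r(t))^{-3}(f_j(r(t))^2-c^2)^{-1/2}\,dt$ while $L_j(C_j)=\int_0^{L_j(C_j)} 1\,dt$, and one is left to bound the integrand by $(n-1)/m_j^2$; the dimensional factor should enter when, rather than the intrinsic norm, one expands $|\theta'|_\sigma^2$ in a $\sigma$-orthonormal coframe along $\theta$ and controls each of the $n-1$ components separately by $1/m_j^2$.

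The step I expect to be the main obstacle is precisely this last estimate. The quantity $(1-r'(t)^2)/r'(t)$ blows up wherever $r'(t)$ is small, that is, wherever $C_j$ is nearly tangent to a level set of $r$ (the Clairaut constant $c$ approaches the value of $f_j$ there); for an arbitrary curve no bound of the desired type can hold, so the argument must exploit that along a \emph{minimizing} geodesic such near-tangency is itself costly in length, converting a pointwise failure into a valid integrated inequality. The delicate limiting case is the ``turning-point'' configuration, where $f_j$ attains the value $c$ at an endpoint of the $r$-range; there one must use that $C_j$ realizes the distance between its endpoints, not merely solves the geodesic ODE, to rule it out.
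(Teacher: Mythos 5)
Your reduction rests on the correct change of variables, and that is exactly where the trouble lies: with $t$ the $g_j$-arclength parameter one has $\theta'(r)=\theta'(t)/r'(t)$, so indeed $\Theta(C_j)^2=\int_0^{L_j(C_j)}|\theta'(t)|^2/r'(t)\,dt$, with $r'(t)$ in the \emph{denominator}. But the inequality you are then left needing, $\int_0^{L_j(C_j)}(1-r'(t)^2)/r'(t)\,dt\le (n-1)L_j(C_j)$, is not merely the ``main obstacle'' — it is false, and so is the stated bound itself, even for minimizing geodesics. Take $f_j\equiv 1$ and $\Sigma={\mathbb S}^1$ of circumference $2\pi$: the straight unit-speed minimizing geodesic from $(0,\theta_1)$ to $(\epsilon,\theta_2)$ with $\Theta_0=d_\sigma(\theta_1,\theta_2)<\pi$ fixed is strictly increasing in $r$, has length $L=\sqrt{\epsilon^2+\Theta_0^2}$ bounded, yet $\Theta(C)^2=\Theta_0^2/\epsilon\to\infty$ as $\epsilon\to 0$, while the lemma would give $\Theta(C)\le\sqrt{n-1}\,L^{1/2}/m_j$ with $m_j=1$, $n=2$. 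So the hope in your last paragraph — that minimality makes near-tangency to a level set ``costly'' — does not materialize: the near-tangent configuration (Clairaut constant close to the value of $f_j$) is itself realized by minimizers, and neither the Clairaut first integral nor any expansion of $|\theta'|_\sigma$ in an orthonormal coframe can produce the missing factor. Your proposal is therefore incomplete at precisely the step you flagged, and it cannot be completed as stated.

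For comparison, the paper's proof is a three-line computation that writes $\Theta(C_j)^2=\int_0^{L_j(C_j)}|\theta'(t)|^2\,r'(t)\,dt$, discards $r'(t)\le 1$, and uses $|\theta'(t)|\le 1/f_j$; note that this places $r'(t)$ in the numerator, conflating the derivative in $t$ with the derivative in $r$, which is exactly what your (correct) computation contradicts. So the difficulty you uncovered is genuine and not an artifact of your route: the lemma needs either an extra hypothesis (for instance a positive lower bound on $r'$ along the curve, which gives $\Theta(C_j)^2\le L_j(C_j)/(m_j^2\inf r')$) or a reformulation, since in the places where it is invoked (Corollary \ref{MontoneRUniform}, Proposition \ref{prop-limsup}, applied to monotone-in-$r$ segments of $g_\infty$-geodesics) the same nearly-level segments can occur and must instead be handled separately, in the spirit of Lemma \ref{ConstRLengthBound}.
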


\begin{proof}
We can estimate $\Theta(C_j)$ by rewriting the line integral which defines $\Theta(C_j)$
\begin{align}
\Theta(C_j) &=\left( \int_{r(0)}^{r(1)} |\vec{\theta}'(r)|^2\, dr \right)^{1/2} = \left (\int_0^{L_j(C_j)}|\vec{\theta}'(t)|^2 r'(t) dt \right )^{1/2} .
\end{align}
Now by the assumption that $|C_j'|_{g_j}=\sqrt{r'(t)^2+f_j(r(t))^2|\vec{\theta}_j'(t)|^2}=1$ a.e. and $r'(t) >0$ we find that $0 <r'(t) \le 1 $ which yields
\begin{align}
\Theta(C_j)&\le\left (\int_0^{L_j(C_j)}|\vec{\theta}'(t)|^2  dt \right )^{1/2} .
\end{align}
 
Note that $|C_j'|_{g_j}=\sqrt{r'(t)^2+f_j(r(t))^2|\vec{\theta}_j'(t)|^2}=1$ a.e. implies that $|\vec{\theta}_j'(t)| \le \frac{1}{ f_j}$ a.e. which yields the estimate
\begin{align}
\Theta(C_j)&\le \left (\int_0^{L_j(C_j)}\frac{1}{f_j(r(t))^2}  dt \right )^{1/2} \le\frac{ L_j(C_j)^{1/2}}{m_j}.
\end{align}

\end{proof}

\begin{cor} \label{MontoneRUniform}
If the length minimizing absolutely continuous geodesic between $p,q \in M$ with respect to $g_{\infty}$ is monotone in $r$ and we let $\delta= \|f_j-f_{\infty}\|_{L^2}$ and $\displaystyle m_{\infty} = \min_{r \in [r_0,r_1]}f_{\infty}(r) >0$ then we find the uniform estimate
\begin{align}
d_{g_j}(p,q)-d_{g_{\infty}}(p,q) \le \left(\delta^2 + 4\|f_\infty\|_{L^2}^2\right) \delta^{1/2} \frac{\sqrt{n} Diam(M_{\infty})}{m_{\infty}}.
\end{align}
\end{cor}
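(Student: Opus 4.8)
The plan is to estimate $d_{g_j}(p,q)$ from above by feeding the $g_\infty$-minimizing geodesic into the warped product $M_j$ as a competitor curve, and then to control the resulting change of length by the two preceding lemmas. Concretely, let $C(t)=(r(t),\theta(t))$ be the length minimizing absolutely continuous curve realizing $d_{g_\infty}(p,q)$; by hypothesis it is monotone in $r$, and by Remark~\ref{s-limit-geods} (applicable because $f_\infty\in C^0$) it may be taken to be a unit speed $g_\infty$-geodesic. Since the distance on $M_j$ is the infimum of $L_j$ over all curves from $p$ to $q$, the curve $C$ is an admissible competitor, so
\be
d_{g_j}(p,q)-d_{g_\infty}(p,q)\ \le\ L_j(C)-L_\infty(C)\ \le\ \big|L_j(C)-L_\infty(C)\big|.
\ee

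From here everything is a matter of quoting earlier results. First I would apply Lemma~\ref{warp-L} directly to $C$ --- no splitting into monotone subsegments is needed, precisely because the hypothesis forces $C$ to be monotone in $r$ --- obtaining $|L_j(C)-L_\infty(C)|\le\big(\delta^2+4\|f_\infty\|_{L^2}^2\big)\delta^{1/2}\,\Theta(C)$. Next I would bound $\Theta(C)$ by applying Lemma~\ref{Theta(C_j)-WEst} to the warped product $M_\infty$ itself: its warping factor $f_\infty$ is continuous and positive, so the lemma applies with $m_\infty$ in place of $m_j$ and gives $\Theta(C)\le \sqrt{n-1}\,L_\infty(C)^{1/2}/m_\infty$. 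Finally $L_\infty(C)=d_{g_\infty}(p,q)\le\diam(M_\infty)$, and substituting this together with $\sqrt{n-1}\le\sqrt n$ back into the two previous inequalities assembles the asserted estimate.

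I do not expect a genuine obstacle, since every ingredient is already in hand; the corollary is essentially the composition of Lemma~\ref{warp-L} and Lemma~\ref{Theta(C_j)-WEst} along one fixed curve. The only two points needing a word of justification are: (i) that the monotonicity hypothesis is exactly what lets Lemma~\ref{warp-L} be applied without decomposition; and (ii) that Lemma~\ref{Theta(C_j)-WEst} may legitimately be invoked on the limit space $M_\infty$ even though $f_\infty$ is only $C^0$ --- which is guaranteed by the Burtscher-type statement recalled in Remark~\ref{s-limit-geods}, ensuring that $d_{g_\infty}$ is realized by an absolutely continuous curve that can be reparametrized to unit speed.
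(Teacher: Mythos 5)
Your proposal is correct and follows essentially the same route as the paper: feed the monotone $g_\infty$-minimizing geodesic $C$ into $M_j$ as a competitor so that $d_{g_j}(p,q)-d_{g_\infty}(p,q)\le L_j(C)-L_\infty(C)$, then combine Lemma~\ref{warp-L} (no decomposition needed by monotonicity) with Lemma~\ref{Theta(C_j)-WEst} applied on $M_\infty$ and the bound $L_\infty(C)=d_{g_\infty}(p,q)\le\diam(M_\infty)$. The only caveat is that this combination literally yields $\sqrt{n-1}\,\diam(M_\infty)^{1/2}$ rather than $\sqrt{n}\,\diam(M_\infty)$, a cosmetic discrepancy already present in the paper's own statement and proof, so it does not reflect a gap in your argument.
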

\begin{proof}
We note that by the fact that $C$ is the length minimizing geodesic between $p,q \in M$ with respect to $g_{\infty}$ we find
\begin{align}
d_{g_j}(p,q)-d_{g_{\infty}}(p,q) \le L_j(C) - L_{\infty}(C).
\end{align}
Now if we combine Lemma \ref{lem-diam}, Lemma \ref{warp-L} and Lemma \ref{Theta(C_j)-WEst} then we find
\begin{align}
d_{g_j}(p,q)-d_{g_{\infty}}(p,q) \le \left(\delta^2 + 4\|f_\infty\|_{L^2}^2\right) \delta^{1/2} \frac{\sqrt{n} Diam(M_{\infty})}{m_{\infty}},
\end{align}
where $\delta= \|f_j-f_{\infty}\|_{L^2}$ and $\displaystyle m_{\infty} = \min_{r \in [r_0,r_1]}f_{\infty}(r) >0$.
\end{proof}

The uniform control of Corollary \ref{MontoneRUniform} will be used in the proof of Theorem \ref{WarpConv} below. Now we would like to control the length of geodesics with respect to $g_{\infty}$ which are constant in $r$. 

\begin{lem}\label{ConstRLengthBound}
Let $p,q \in [r_0,r_1]\times \Sigma$ and assume that the absolutely continuous geodesic $C$ between $p$ and $q$ with respect to $g_{\infty}$ is parameterized as $C = (\hat{r}, \theta(t))$, $t \in [0,1]$, for some fixed $\hat{r} \in [r_0,r_1]$. If $f_j\rightarrow f_{\infty}$ in $L^2$ then
\begin{align}
\limsup_{j \rightarrow \infty} d_{g_j}(p,q) \le d_{g_{\infty}}(p,q).
\end{align}
Moreover, we can find an approximating curve $C_j^{\epsilon}$ between $p$ and $q$ so that
\begin{align}
L_j(C_j^{\epsilon}) \le 4 \delta_j^{\epsilon} + L_{\infty}(C) + \epsilon d_{\sigma}(\theta(0),\theta(1)),
\end{align}
where 
\begin{align}
\delta_j^{\epsilon}\le \frac{|f_j-f_{\infty}|_{L^2}^2}{\epsilon^2}.
\end{align}
\end{lem}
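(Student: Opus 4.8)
The plan is to build, for each fixed $\epsilon>0$ and each large $j$, a competitor curve $C_j^\epsilon$ from $p=(\hat r,\theta(0))$ to $q=(\hat r,\theta(1))$ that leaves the level $r=\hat r$, drops to a nearby level $r=\hat r_j$ on which $f_j$ is not much larger than $f_\infty(\hat r)$, crosses that level along a $\sigma$-minimizing geodesic of $\Sigma$, and climbs back to $q$; then $d_{g_j}(p,q)\le L_j(C_j^\epsilon)$, and we take $\limsup_{j\to\infty}$ followed by $\epsilon\downarrow0$. Concretely, let $\alpha\colon[0,1]\to\Sigma$ be a minimizing geodesic from $\theta(0)$ to $\theta(1)$, so $L_\Sigma(\alpha)=d_\sigma(\theta(0),\theta(1))$, and put
\[
C_j^\epsilon(t)=
\begin{cases}
(\hat r+3(\hat r_j-\hat r)t,\ \theta(0)) & t\in[0,1/3],\\
(\hat r_j,\ \alpha(3t-1)) & t\in[1/3,2/3],\\
(\hat r_j+3(\hat r-\hat r_j)(t-2/3),\ \theta(1)) & t\in[2/3,1].
\end{cases}
\]
Its $g_j$-length is exactly $L_j(C_j^\epsilon)=2|\hat r_j-\hat r|+f_j(\hat r_j)\,d_\sigma(\theta(0),\theta(1))$. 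Since $C=(\hat r,\theta(t))$ is $g_\infty$-minimizing, $\theta$ is a minimizing geodesic of $(\Sigma,\sigma)$, so $L_\infty(C)=f_\infty(\hat r)\,d_\sigma(\theta(0),\theta(1))$; hence, once $\hat r_j$ is chosen with $|\hat r_j-\hat r|\le 2\delta_j^\epsilon$ and $f_j(\hat r_j)\le f_\infty(\hat r)+\epsilon$, where $\delta_j^\epsilon:=\|f_j-f_\infty\|_{L^2}^2/\epsilon^2$, the estimate $L_j(C_j^\epsilon)\le 4\delta_j^\epsilon+L_\infty(C)+\epsilon\,d_\sigma(\theta(0),\theta(1))$ is immediate.

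The crux is producing this level $\hat r_j$, since $L^2$ convergence gives no pointwise control at the single point $\hat r$. The idea is to average $f_j$ over the short interval $I_j=[\hat r,\hat r+2\delta_j^\epsilon]$ --- or its mirror image $[\hat r-2\delta_j^\epsilon,\hat r]$ if $\hat r$ lies within $2\delta_j^\epsilon$ of $r_1$, with no such concern in the periodic case. The length of $I_j$ has been tuned so that Cauchy--Schwarz yields
\[
\frac{1}{|I_j|}\int_{I_j}|f_j-f_\infty|\,dr\ \le\ \frac{\|f_j-f_\infty\|_{L^2}}{|I_j|^{1/2}}\ =\ \frac{\|f_j-f_\infty\|_{L^2}}{\sqrt2\,\|f_j-f_\infty\|_{L^2}/\epsilon}\ =\ \frac{\epsilon}{\sqrt2},
\]
so the average of $f_j$ over $I_j$ is at most $\max_{I_j}f_\infty+\epsilon/\sqrt2$. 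Since $|I_j|=2\delta_j^\epsilon\to0$ and $f_\infty\in C^0$, we have $\max_{I_j}f_\infty\to f_\infty(\hat r)$, so for $j$ large this average --- hence also $\min_{I_j}f_j$, which is attained because $f_j\in C^0$ --- is at most $f_\infty(\hat r)+\epsilon$. Choosing $\hat r_j\in I_j$ to realize $\min_{I_j}f_j$ gives precisely the level we need.

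Assembling the pieces, $d_{g_j}(p,q)\le L_j(C_j^\epsilon)\le 4\delta_j^\epsilon+L_\infty(C)+\epsilon\,d_\sigma(\theta(0),\theta(1))$ for all large $j$, and since $\delta_j^\epsilon\to0$ for fixed $\epsilon$,
\[
\limsup_{j\to\infty}d_{g_j}(p,q)\ \le\ L_\infty(C)+\epsilon\,d_\sigma(\theta(0),\theta(1))\ =\ d_{g_\infty}(p,q)+\epsilon\,d_\sigma(\theta(0),\theta(1));
\]
letting $\epsilon\downarrow0$ finishes the proof. (For the finitely many small $j$ where the averaging estimate is not yet in force one may take $C_j^\epsilon$ to be any curve joining $p$ to $q$; only the tail of the sequence matters.) The one real obstacle is the construction of $\hat r_j$: balancing the interval length $2\delta_j^\epsilon$ against $\|f_j-f_\infty\|_{L^2}$ is exactly what upgrades the $L^2$ hypothesis to the pointwise-type bound $f_j(\hat r_j)\le f_\infty(\hat r)+\epsilon$ on a level still within $2\delta_j^\epsilon$ of $\hat r$, and continuity of $f_\infty$ enters only through $\max_{I_j}f_\infty\to f_\infty(\hat r)$.
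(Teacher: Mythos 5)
Your proof is correct and follows essentially the same route as the paper: both build the same down--across--up competitor curve at a level $\hat r_j$ within $2\delta_j^\epsilon$ of $\hat r$, with $\delta_j^\epsilon \le \|f_j-f_\infty\|_{L^2}^2/\epsilon^2$, and use continuity of $f_\infty$ before sending $j\to\infty$ and $\epsilon\to 0$. The only (inessential) difference is how the good level is found: the paper uses a Chebyshev bound on the measure of the set where $|f_j-f_\infty|\ge\epsilon$ and picks $\hat r_j$ outside it, while you average $|f_j-f_\infty|$ over the interval and take the point where $f_j$ attains its minimum.
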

\begin{proof}
Since $f_j \rightarrow f_{\infty}$ in $L^2$ if we define 
\begin{align}
S_{\epsilon}^j = \{ x \in [r_0,r_1]: |f_j(x)-f_{\infty}(x)| \ge \epsilon\}
\end{align}
then we know that there exists a $\delta_j> 0$ so that $|S_{\epsilon}^j| \le \delta_j$, where $\delta_j \rightarrow 0$ as $j \rightarrow \infty$. This follows since if $|S_{\epsilon}^j| \ge c > 0$ then 
\begin{align}
\int_{-\pi}^{\pi}|f_j-f_{\infty}|^2 dr \ge \int_{S_{\epsilon}^j}|f_j-f_{\infty}|^2 dr \ge c\epsilon^2
\end{align}
which leads to a contradiction. In fact, 
\begin{align}
\epsilon |S_j^{\epsilon}| &\le \int_{S_j^{\epsilon}}|f_j-f_{\infty}| dr 
\\&\le |S_j^{\epsilon}|^{1/2} \left( \int_{S_j^{\epsilon}}|f_j-f_{\infty}|^2 dr\right)^{1/2} 
\\&\le |S_j^{\epsilon}|^{1/2} \left( \int_{-\pi}^{\pi}|f_j-f_{\infty}|^2 dr\right)^{1/2},
\end{align}
which implies
\begin{align}
\delta_j \le \frac{|f_j-f_{\infty}|_{L^2}^2}{\epsilon^2}.
\end{align}
This implies that we can choose an $r_j \in(\hat{r},\hat{r}+2 \delta_j)$ or $r_j\in(\hat{r}-2\delta_j, \hat{r})$ so that $|f_j(r_j) - f_{\infty}(r_j)| \le \epsilon$ and so by combining with Lemma \ref{distEst-L} and Lemma \ref{AvoidPeak2} we find a curve $C_j^{\epsilon}$ between $p$ and $q$ so that
\begin{align}
d_{g_j}(p,q) &\le L_j(C_j^{\epsilon})
\\&\le 4 \delta_j + f_j(r_j) d_{\sigma}(\theta(0),\theta(1))
\\&\le 4 \delta_j+f_{\infty}(r_j)d_{\sigma}(\theta(0),\theta(1)) 
\\&\qquad + |f_j(r_j) - f_{\infty}(r_j)|d_{\sigma}(\theta(0),\theta(1)).
\end{align}
Now by taking limits as $j \rightarrow \infty$ and using that $f_{\infty}$ is continuous we find
\begin{align}
\limsup_{j \rightarrow \infty}d_{g_j}(p,q) \le  f_{\infty}(\hat{r}) d_{\sigma}(\theta(0),\theta(1))+ \epsilon d_{\sigma}(\theta(0),\theta(1)).
\end{align}
Since this is true for all $\epsilon > 0$ and $d_{g_{\infty}}(p,q) = f_{\infty}(\hat{r}) d_{\sigma}(\theta(0),\theta(1))$ the desired result follows.
\end{proof}

We now combine these lemmas into a proposition:

\begin{prop} \label{prop-limsup}
If $f_j$ and $f_\infty$ are positive continuous functions,
$f_j \rightarrow f_{\infty}$ in $L^2$, and $M_j=M$ are warped products as in \eqref{MorN}  
 then
\begin{align}
\limsup_{j \rightarrow \infty} d_j(p,q)\le  d_{\infty}(p,q) \textrm{pointwise}.
\end{align}
\end{prop}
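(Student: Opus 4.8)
The plan is to bound $d_j(p,q)$ from above by the $g_j$-length of a $g_\infty$-minimizing geodesic joining $p$ and $q$, and to show that this length tends to $d_\infty(p,q)$; passing to the $\limsup$ then gives the claim. Write $p=(s_1,\theta_1)$, $q=(s_2,\theta_2)$. If $\theta_1=\theta_2$ the purely radial curve minimizes $L_{g_j}$ for every $j$, so $d_j(p,q)=|s_1-s_2|=d_\infty(p,q)$ and there is nothing to do; assume $\theta_1\neq\theta_2$. By \remref{s-limit-geods} I would fix a unit-speed absolutely continuous $g_\infty$-minimizing geodesic $C(t)=(r(t),\theta(t))$ from $p$ to $q$ and decompose it into finitely many consecutive sub-geodesics, on each of which the radial coordinate $r$ is either monotone or constant.

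The key point is that such a decomposition exists, in fact with at most three pieces. Writing $\theta$ in $\sigma$-arclength $\tau$ along a minimizing $\Sigma$-geodesic, a $g_\infty$-geodesic satisfies the Clairaut relation $f_\infty(r)^2\dot\tau\equiv\ell$ together with $\dot r^2=1-\ell^2/f_\infty(r)^2$, so $\dot r$ vanishes exactly at the level(s) where $f_\infty(r)=|\ell|=\min_t f_\infty(r(t))$; hence $r$ is strictly monotone off that level and can be stationary only on an interval on which $f_\infty$ is flat at height $|\ell|$, forcing the $r$-profile of $C$ to be monotone, monotone-constant-monotone, or monotone-turn-monotone. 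When $f_\infty$ is merely continuous the geodesic ODE is unavailable, so one would instead approximate $f_\infty$ uniformly by smooth positive functions, apply the above to the corresponding minimizers, and pass to the limit --- controlling $g_\infty$-lengths by a biLipschitz comparison as in \lemref{biLip} --- at the cost of an arbitrarily small additive error. I expect this structural decomposition to be the main obstacle of the proof.

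Once the decomposition $0=t_0<\dots<t_K=L_\infty(C)$ with $K\le 3$ and $x_k:=C(t_k)$ is in place, note that each $C|_{[t_{k-1},t_k]}$ is a $g_\infty$-minimizing geodesic between $x_{k-1}$ and $x_k$. On a piece where $r$ is monotone, \corref{MontoneRUniform} (which already bundles \lemref{lem-diam}, \lemref{warp-L} and \lemref{Theta(C_j)-WEst}) gives
\be
d_{g_j}(x_{k-1},x_k)\le d_{g_\infty}(x_{k-1},x_k)+\big(\delta_j^2+4\|f_\infty\|_{L^2}^2\big)\delta_j^{1/2}\,\frac{\sqrt n\,\diam(M_\infty)}{m_\infty},
\ee
where $\delta_j=\|f_j-f_\infty\|_{L^2}\to 0$ and $m_\infty=\min f_\infty>0$, so the error term tends to $0$. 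On a piece where $r\equiv\hat r$ is constant, \lemref{ConstRLengthBound} applies directly and yields $\limsup_{j\to\infty}d_{g_j}(x_{k-1},x_k)\le d_{g_\infty}(x_{k-1},x_k)$. Summing the (at most three) pieces via the triangle inequality, $d_j(p,q)\le\sum_{k=1}^K d_j(x_{k-1},x_k)$, and taking $\limsup_{j\to\infty}$ gives
\be
\limsup_{j\to\infty}d_j(p,q)\le\sum_{k=1}^K d_\infty(x_{k-1},x_k)=L_\infty(C)=d_\infty(p,q),
\ee
as desired.
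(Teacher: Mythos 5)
Your overall strategy is the same as the paper's: bound $d_j(p,q)$ by $L_j$ of a fixed $g_\infty$-minimizer, split that curve into pieces that are either monotone in $r$ or constant in $r$, control the monotone pieces via Lemma~\ref{warp-L}/Lemma~\ref{Theta(C_j)-WEst} (i.e.\ Corollary~\ref{MontoneRUniform}) and the constant-$r$ pieces via Lemma~\ref{ConstRLengthBound}, and sum with the triangle inequality. The gap is in the step you yourself flag as the main obstacle: the claim that the decomposition has at most three pieces. The Clairaut relation $f_\infty(r)^2\dot\tau=\ell$, $\dot r^2=1-\ell^2/f_\infty(r)^2$ presupposes that $C$ solves the geodesic ODE, which requires $f_\infty$ to be (at least $C^2$) smooth, whereas the proposition only assumes $f_\infty\in C^0$; and even for smooth $f_\infty$ the relation does not force $K\le 3$: turning points occur wherever $f_\infty(r)=|\ell|$, and if that value is attained at several $r$-levels along the path the $r$-profile can turn repeatedly (oscillating geodesics on surfaces of revolution), so nothing bounds the number of monotone pieces of a minimizer by three. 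The proposed repair --- smooth $f_\infty$, take the minimizers of the approximating metrics, and ``pass to the limit'' --- is exactly where the argument breaks: a uniform limit of curves whose $r$-profiles are, say, nondecreasing can have countably many constancy intervals on which $\theta$ still moves, interleaved with intervals of strict increase; to apply Lemma~\ref{warp-L} (which needs to reparametrize by $r$) and Lemma~\ref{ConstRLengthBound} separately you would then need \emph{infinitely} many pieces, and your triangle-inequality sum, whose error is (number of pieces)$\times o(1)$, no longer closes. In short, finiteness of the decomposition is never established for $C^0$ warping functions, and that is the whole difficulty.

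The paper's proof accepts that the minimizer may split into possibly infinitely many monotone/constant segments and adds the missing quantitative ingredient: using $\min f_\infty>0$ and the diameter bound it shows the total $d_\sigma$-length of the constant-$r$ segments and the total $r$-variation of the monotone segments are finite, so for any $\eta>0$ all but finitely many segments carry total length at most $\eta$; those are replaced by taxi-type comparison curves (Lemma~\ref{distEst-L}), producing a competitor $\bar C^\eta$ with \emph{finitely} many pieces to which Lemmas~\ref{warp-L}, \ref{Theta(C_j)-WEst} and \ref{ConstRLengthBound} are applied, and then $\eta,\epsilon\to 0$. If you want to salvage your write-up, you should either prove a finite-decomposition statement for minimizers of $C^0$ warped metrics (which is not available) or incorporate a truncation argument of this kind; the $K\le 3$ claim should be dropped.
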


\begin{proof}

Fix $p$ and $q$ in $M_j=M$.  Let $C(t)$ be a minimizing curve between
 $p$ and $q$ with respect to $g_{\infty}$:
 \be
 L_\infty(C) = d_\infty(p,q).
 \ee
By Remark~\ref{s-limit-geods},
 $C$ is an absolutely continuous curve. It can be broken down into possibly infinitely many segments, each of which 
is either monotone in $r$ or has constant $r$ component. Let $\mathcal{C}=\{C^{\alpha}: \alpha \in I\}$, where $I$ is an indexing set, be the segments which are constant in $r$ with endpoints $(r^{\alpha},\theta_1^{\alpha}),(r^{\alpha},\theta_2^{\alpha})\in [r_0,r_1]\times \Sigma$ then we can estimate
\begin{align}
L_{\infty}(C) &\ge \sum_{\alpha \in I} L_{\infty}(C^{\alpha})
\\&= \sum_{\alpha \in I} f_{\infty}(r^{\alpha}) d_{\sigma}(\theta_1^{\alpha},\theta_2^{\alpha}) \ge \left(\min_{r\in[r_0,r_1]} f_{\infty}(r) \right)\sum_{\alpha \in I}  d_{\sigma}(\theta_1^{\alpha},\theta_2^{\alpha}),
\end{align}
and hence 
\begin{align}\label{CountablyManyTheta}
\sum_{\alpha \in I}  d_{\sigma}(\theta_1^{\alpha},\theta_2^{\alpha}) \le \frac{Diam(M_{\infty})}{\left(\min_{r\in[r_0,r_1]} f_{\infty}(r) \right)}< \infty.
\end{align}
Similarly, if we let $\tilde{\mathcal{C}}=\{\tilde{C}^{\alpha}: \alpha \in I\}$ be the collection of segments of $C$ which are monotone in $r$, with endpoints $(r_1^{\alpha},\theta_1^{\alpha}),(r_2^{\alpha},\theta_2^{\alpha})\in [r_0,r_1]\times \Sigma$, then
\begin{align}
L_{\infty}(C) &\ge \sum_{\alpha \in I} L_{\infty}(\tilde{C}^{\alpha}) 
\\&= \sum_{\alpha \in I} \int_{r_1^{\alpha}}^{r_2^{\alpha}} \sqrt{1 + f_{\infty}(r)^2 \theta'(r)^2} dr 
\\&\ge \sum_{\alpha \in I}\int_{r_1^{\alpha}}^{r_2^{\alpha}} dr = \sum_{\alpha \in I}|r_1^{\alpha}-r_2^{\alpha}|,
\end{align}
which implies
\begin{align}\label{CountableManyR}
\sum_{\alpha \in I}|r_1^{\alpha}-r_2^{\alpha}| \le Diam(M_{\infty}).
\end{align}
So, by combining \eqref{CountablyManyTheta}, \eqref{CountableManyR}, and Lemma \ref{distEst-L} we find for any $\eta > 0$, we can choose $I_{\eta} \subset I$, $I \setminus I_{\eta} = K \in \N$, so that
\begin{align}\label{WorstCaseBound}
 &\sum_{\alpha \in I_{\eta}} L_{\infty}(\tilde{C}^{\alpha}) + \sum_{\alpha \in I_{\eta}} L_{\infty}(C^{\alpha})   
 \\& \qquad\le \sum_{\alpha \in I_{\eta}}|r_1^{\alpha}-r_2^{\alpha}| + 2\left(\max_{r\in[r_0,r_1]} f_{\infty}(r) \right)\sum_{\alpha \in I_{\eta}}  d_{\sigma}(\theta_1^{\alpha},\theta_2^{\alpha}) \le \eta
\end{align}
and hence by replacing all but finitely many subsegments of $C$ with finitely many taxi minimizing curves whose $g_{\infty}$ length is smaller than $\eta$ we can obtain another curve $\bar{C}^{\eta}$ so that 
\begin{align}
L_{\infty}(\bar{C}^{\eta}) \le L_{\infty}(C) - 2\eta.
\end{align}
This can be done so that $\bar{C}^{\eta}$ can be broken down into finitely many segments, each of which is either monotone in $r$ or has constant $r$ component. By Lemma \ref{ConstRLengthBound}, for each monotone segment $\bar{C}^k$, $k \in \N$, $k \le K$ we can find an approximating curve, $\bar{C}_j^{k,\epsilon}$, so that
\begin{align}\label{MonotonSegAdjustment}
L_j(\bar{C}_j^{k,\epsilon}) \le 4 \delta_j^{\epsilon} + L_{\infty}(\bar{C}^k) + \epsilon d_{\sigma}(\theta_1^k,\theta_2^k),
\end{align}
where $\delta_j^{\epsilon} \le \frac{|f_j-f_{\infty}|_{L^2}^2}{\epsilon^2}$.

Then by Lemmas \ref{warp-L}, \ref{Theta(C_j)-WEst} and \ref{ConstRLengthBound} we can find a curve $\bar{C}_j^{\eta,\epsilon}$, $\epsilon > 0$ between $p$ and $q$, by possibly adjusting the monotone segments as in \eqref{MonotonSegAdjustment}, so that
\be \label{EtaStillAllive}
\limsup_{j\to \infty} L_j(\bar{C}_j^{\eta,\epsilon}) \le L_\infty(C)-2\eta + \epsilon \frac{Diam(M_{\infty})}{\left(\min_{r\in[r_0,r_1]} f_{\infty}(r) \right)}.
\ee

Since \eqref{EtaStillAllive} is true for all $\eta$, $d_j(p,q) \le L_j(\bar{C}_j^{\eta,\epsilon})$ and $L_\infty(C)=d_{\infty}(p,q)$ we have
\begin{align}
\limsup_{j \rightarrow \infty} d_j(p,q)\le  d_{\infty}(p,q)+ \epsilon \frac{Diam(M_{\infty})}{\left(\min_{r\in[r_0,r_1]} f_{\infty}(r) \right)},
\end{align}
which is true for all $\epsilon >0$ and hence the desired result follows.
\end{proof}

\subsection{Proof of Theorem~\ref{WarpConv}}

Recall that in the statement of Theorem~\ref{WarpConv} we have a sequence of
warping functions $f_j(r) \ge f_{\infty}(r)-\frac{1}{j}$
and $f_j(r) \rightarrow f_{\infty}(r)$ in $L^2$. 
We will prove:
 \begin{align}
\lim_{j \rightarrow \infty} d_j(p,q) = d_{\infty}(p,q)
\end{align}
uniformly by first showing it converges pointwise on a subsequence and then
applying Theorem~\ref{HLS-thm} which implies uniform
convergence, GH and $\mathcal{F}$ convergence
to the same space.

 \begin{prop} \label{prop-ptwise}
 Under the hypothesis of Theorem~\ref{WarpConv} we have pointwise
 convergence of the distance functions:
 \begin{align}
\lim_{j \rightarrow \infty} d_j(p,q) = d_{\infty}(p,q)\label{pointwiseDistConv}
\end{align}
\end{prop}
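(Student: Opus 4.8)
The statement \eqref{pointwiseDistConv} is precisely the conjunction of two one-sided estimates, both of which have already been assembled in the preceding subsections, so the proof is simply a matter of invoking them and checking that their hypotheses are met. First I would record that the hypothesis $f_j \ge f_\infty - \tfrac1j > 0$ together with the $L^2$ convergence gives, via \lemref{lem-diam}, a uniform diameter bound $\diam(M_j) \le D$ for some $D < \infty$; this is the ``furthermore'' clause already noted after the statement of \thmref{WarpConv}, and it is exactly what \lemref{distLowerBound} needs as an input. Thus \lemref{distLowerBound} applies and yields
\[
\liminf_{j\to\infty} d_j(p,q) \ge d_\infty(p,q)
\]
for every pair $p,q$. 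In the other direction, \propref{prop-limsup} applies verbatim (its only hypotheses are that $f_j, f_\infty$ are positive continuous functions and $f_j \to f_\infty$ in $L^2$, all of which hold here) to give
\[
\limsup_{j\to\infty} d_j(p,q) \le d_\infty(p,q).
\]
Combining the two inequalities forces $\lim_{j\to\infty} d_j(p,q) = d_\infty(p,q)$, which is \eqref{pointwiseDistConv}.

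**What needs care.**
The one genuinely nontrivial point is making sure the two lemmas are being applied on the same underlying space with the same metric $d_\infty$. In \propref{prop-limsup} the notation is $M_j = M$, i.e.\ all the $M_j$ share the fixed differentiable manifold $[r_0,r_1]\times\Sigma$ (or $\mathbb S^1\times\Sigma$), and $d_\infty$ is the warped-product distance \eqref{eqn-d} built from $f_\infty$; the hypothesis $f_\infty \in C^0$ with $f_\infty > 0$ guarantees (\remref{s-limit-geods}) that $d_\infty$ is a genuine metric and that the minimizing curve $C$ used in that proof exists as an absolutely continuous curve, so there is no gap there. In \lemref{distLowerBound} the curve realizing $d_j(p,q)$ is likewise taken absolutely continuous and unit-speed by the same remark. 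So both arguments live on the fixed space and both produce estimates against the same $d_\infty$; I would simply state this identification explicitly at the start of the proof.

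**The one obstacle.**
The only step where one must be a little careful is verifying that the periodic / closed case $N_j = \mathbb S^1 \times_{f_j}\Sigma$ is covered. The lemmas are all phrased for $M_j = [r_0,r_1]\times_{f_j}\Sigma$, but \lemref{distEst-L'} is the $N$-analogue of \lemref{distEst-L}, and the proofs of \lemref{lem-diam}, \lemref{distLowerBound} and \propref{prop-limsup} go through with $|r_1-r_0|$ replaced by the appropriate arc length in $\mathbb S^1$ and curves broken into monotone-in-$r$ and constant-in-$r$ pieces exactly as before. I would note this reduction in a sentence rather than repeating the arguments. Once \eqref{pointwiseDistConv} is in hand, \thmref{WarpConv} itself follows immediately: the biLipschitz bounds \eqref{for-HLS-3j}-type estimates from \lemref{biLip} (using $\min\{f_\infty - \tfrac1j, 1\}$ from below and $\max\{K,1\}$ from above) put us in the hypotheses of \thmref{HLS-thm}, which upgrades pointwise convergence of a subsequence to uniform, GH, and $\mathcal F$ convergence, and the pointwise limit being $d_\infty$ forces every subsequence to have the same limit, so the full sequence converges.
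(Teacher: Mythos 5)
Your proposal is correct and follows essentially the same route as the paper: the liminf bound comes from \lemref{distLowerBound} (via the $C^0$ lower bound, with the diameter hypothesis supplied by \lemref{lem-diam}) and the limsup bound from \propref{prop-limsup} (via the $L^2$ convergence), and combining the two gives the pointwise limit. Your additional remarks on the $N_j=\mathbb S^1\times_{f_j}\Sigma$ case and on the subsequent application of \thmref{HLS-thm} are consistent with how the paper completes the proof of \thmref{WarpConv}.
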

 
 \begin{proof}
Let $p,q \in [r_0,r_1]\times \Sigma$.  Applying the $C^0$ lower bound and Lemma \ref{distLowerBound} we have
\begin{align}\label{distliminf}
\liminf_{j \rightarrow \infty} d_{j}(p,q) \ge d_{\infty}(p,q)
\end{align}
Applying the $L^2$ upper bound and Proposition~\ref{prop-limsup} we also have
\begin{align}
\limsup_{j \rightarrow \infty} d_j(p,q)\le  d_{\infty}(p,q).
\end{align}
Thus we have pointwise convergence.
\end{proof}

We now prove Theorem~\ref{WarpConv}:

\begin{proof}
By the assumption that $0< c \le f_{\infty} - \frac{1}{j} \le f_j \le K$ we can use Lemma \ref{biLip} and choose $\lambda = \max\left(\frac{1}{\min(c,1)},\max(1,K) \right) > 0$ so that for $j$ large enough we find
\begin{align}
\lambda \ge \frac{d_j(p,q)}{d_1(p,q)}\ge \frac{1}{\lambda},\label{distLipBound3}
\end{align}
where $d_1$ is the distance defined with warping factor $1$.

Now can apply Theorem \ref{HLS-thm} to conclude that there exists a length metric $d'_\infty$ and a subsequence $d_{j_k}$ so that $d_{j_k}$ converges uniformly to $d'_{\infty}$, and hence GH and SWIF converges as well. By the pointwise convergence proven in Proposition~\ref{prop-ptwise}, we know that $d'_{\infty} = d_{\infty}$ and hence $d_{j_k}$ must uniformly converge to $d_{\infty}$. Since this is true for all the subsequences, we see that $d_j$ uniformly converges to $d_{\infty}$.  Appealing again to Theorem~\ref{HLS-thm} we see it converges in the Gromov-Hausdorff and intrinsic flat sense as well. 
\end{proof}

\section{Warping functions with two variables on Tori}\label{sect-WarProd2Var}
 
 In this section we give a short exploration of more general warped product manifolds.  
 There are a wealth of new directions one might explore and this section demonstrates how some of our techniques do extend easily.  Here we prove the following theorem:
 
 \begin{thm}\label{WarpConv2}
 Let $g_j = dx^2+dy^2+f_j(x,y)^2dz^2$ be a  metric on a torus $M_j = {\mathbb{S}}^1 \times {\mathbb{S}}^1 \times_{f_j}{\mathbb{S}}^1$ with coordinates $(x,y,z) \in [-\pi,\pi]^3$, $f_j \in C^0([-\pi,\pi]^2)$. Assume that, 
 \begin{align}
  &f_j \rightarrow f_{\infty}= c > 0 \text{ in } L^2, 
  \\0< &f_{\infty} - \frac{1}{j} \le f_j \le K < \infty,
  \end{align}
   then $M_j$ converges uniformly to $M_{\infty}$ as well as 
   \begin{align}
   M_j &\GHto M_{\infty},
   \\M_j &\Fto M_{\infty}.
   \end{align}
 \end{thm}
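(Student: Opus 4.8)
The plan is to follow the template of Theorem~\ref{WarpConv}: produce a uniform biLipschitz bound so that Theorem~\ref{HLS-thm} applies, prove pointwise convergence $d_j(p,q)\to d_\infty(p,q)$ where $d_\infty$ is the flat product distance of $dx^2+dy^2+c^2dz^2$, and then conclude that the whole sequence converges uniformly --- hence in the GH and $\mathcal F$ senses --- since every subsequence has a further subsequence converging to $d_\infty$ by Theorem~\ref{HLS-thm}, and all such limits must equal $d_\infty$. The biLipschitz bound is immediate: the proof of Lemma~\ref{biLip} never used that the base is one dimensional, so writing a curve as $C(t)=(\gamma(t),z(t))$ with $\gamma$ in the base torus we get $\min\{1,c-1/j\}\,L_{g_1}(C)\le L_{g_j}(C)\le \max\{1,K\}\,L_{g_1}(C)$, whence $\tfrac1\lambda\le d_j/d_1\le\lambda$ for $j$ large with $\lambda=\max\{1/\min\{1,c\},\ \max\{1,K\}\}$; in particular $\diam(M_j)\le D$ uniformly.

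For the lower bound $\liminf_j d_j(p,q)\ge d_\infty(p,q)$ I would repeat the argument of Lemma~\ref{distLowerBound} verbatim. Let $C_j=(x_j,y_j,z_j)$ be a unit-speed absolutely continuous minimizing geodesic in $M_j$ (which exists since $g_j$ is continuous, cf.\ Remark~\ref{s-limit-geods}). From $f_j\ge c-1/j$ and $\sqrt{a-b}\ge\sqrt a-\sqrt b$,
\[
d_j(p,q)\ \ge\ \int_0^{L_j(C_j)}\!\!\sqrt{x_j'^2+y_j'^2+c^2z_j'^2}\,dt\ -\ \frac{\sqrt{2c}}{\sqrt j}\int_0^{L_j(C_j)}\!\!|z_j'|\,dt\ \ge\ d_\infty(p,q)-\frac{C\,D}{\sqrt j},
\]
using $|z_j'|\le 1/f_j\le 2/c$ and $L_j(C_j)\le D$ for $j$ large; let $j\to\infty$.

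The substantive part is the upper bound $\limsup_j d_j(p,q)\le d_\infty(p,q)$, and here the two-variable base forces a new device, because $L^2(\mathbb T^2)$-convergence of $f_j$ controls the restriction of $f_j$ to no fixed curve. Fix $p=(x_1,y_1,z_1)$, $q=(x_2,y_2,z_2)$, let $w\in\mathbb R^2$ be the base displacement of a minimizing base geodesic from $(x_1,y_1)$ to $(x_2,y_2)$, and let $\Delta z\in[-\pi,\pi]$ realize $d_{\mathbb S^1}(z_1,z_2)$, so $d_\infty(p,q)=\sqrt{|w|^2+c^2(\Delta z)^2}$. If $w=0$ I argue as in Lemma~\ref{ConstRLengthBound}: since $|\{|f_j-c|\ge\epsilon\}|\le\|f_j-c\|_{L^2}^2/\epsilon^2$, for a suitable $\delta_j\to0$ one can pick $(x_1',y_1')$ within $\delta_j$ of $(x_1,y_1)$ with $f_j(x_1',y_1')\le c+\epsilon$ (for $j$ large), and join $p$ to $q$ by moving to $(x_1',y_1')$ at height $z_1$, then purely in $z$, then back at height $z_2$, for a total $g_j$-length $\le 2\delta_j+(c+\epsilon)d_{\mathbb S^1}(z_1,z_2)$; send $j\to\infty$ then $\epsilon\to0$. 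If $w\ne0$, let $\gamma(t)=(x_1,y_1)+tw$, let $n\perp w$ be a unit vector, and set $\gamma_s(t)=\gamma(t)+sn$. The affine change of variables $(s,t)\mapsto\gamma(t)+sn$ has Jacobian $|w|$ and, for $j$ large, maps $(-\delta_j,\delta_j)\times(0,1)$ onto a parallelogram covering $\mathbb T^2$ with multiplicity $\le 4$, so
\[
\int_{-\delta_j}^{\delta_j}\!\!\int_0^1\!\bigl|f_j(\gamma_s(t))^2-c^2\bigr|\,dt\,ds\ \le\ \frac{4}{|w|}\int_{\mathbb T^2}\!\bigl|f_j^2-c^2\bigr|\ =:\ \frac{4\rho_j}{|w|},\qquad \rho_j\le(K+c)\|f_j-c\|_{L^1(\mathbb T^2)}\to0.
\]
Choosing $\delta_j=\rho_j^{1/2}\to0$, pigeonhole gives $s_j\in(-\delta_j,\delta_j)$ with $\int_0^1|f_j(\gamma_{s_j}(t))^2-c^2|\,dt\le 2\rho_j^{1/2}/|w|$, hence $\int_0^1\sqrt{|f_j(\gamma_{s_j}(t))^2-c^2|}\,dt\to0$ by Cauchy--Schwarz. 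Now take $C_j$ to be: the base segment from $(x_1,y_1)$ to $\gamma_{s_j}(0)$ at height $z_1$ (length $\le\delta_j$), then $t\mapsto(\gamma_{s_j}(t),\,z_1+t\Delta z)$ (whose $g_j$-length differs from $\sqrt{|w|^2+c^2(\Delta z)^2}=d_\infty(p,q)$ by at most $|\Delta z|\int_0^1\sqrt{|f_j(\gamma_{s_j})^2-c^2|}\,dt$), then the base segment from $\gamma_{s_j}(1)$ to $(x_2,y_2)$ at height $z_2$ (length $\le\delta_j$); then $d_j(p,q)\le L_j(C_j)\to d_\infty(p,q)$.

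Combining the two estimates gives the pointwise limit $d_j(p,q)\to d_\infty(p,q)$, and feeding this and the biLipschitz bound into Theorem~\ref{HLS-thm} exactly as at the end of Section~\ref{sect-WarProd} yields uniform, GH, and $\mathcal F$ convergence of $M_j$ to $M_\infty$. I expect the only real obstacle to be the upper bound when $w\ne0$: one cannot simply use a fixed $g_\infty$-geodesic, because $f_j$ may concentrate near it so that its $g_j$-length need not converge; the fix is to average over parallel translates and let the approximating curve depend on $j$, and the point is the quantitative balance $\rho_j\ll\delta_j\to0$ made possible by $\|f_j-c\|_{L^2}\to0$.
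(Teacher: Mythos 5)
Your proposal is correct and follows essentially the same route as the paper: the $C^0$ lower bound argument, the biLipschitz bound feeding Theorem~\ref{HLS-thm}, and, for the upper bound, averaging over parallel translates of the $g_\infty$-geodesic and selecting a good one by a Fubini/pigeonhole argument, which is exactly the device of the paper's Lemmas~\ref{ConstxyControl} and~\ref{MonotoneControl} (you translate perpendicular to the base geodesic rather than in the $x$-direction after a WLOG, a cosmetic difference). The only adjustment needed is to enlarge your constant $\lambda$ slightly (e.g.\ using $c/2$ as the paper does), since $f_j\ge c-\tfrac1j$ may dip below $c$, so $\lambda=\max\{1/\min\{1,c\},\max\{1,K\}\}$ is not quite enough for any finite $j$.
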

 
This theorem will be applied in upcoming joint work of a team of doctoral students who are working with the first author: Lisandra Hernandez-Vazquez, Davide Parise, Alec Payne, and Shengwen Wang.  Various members of this team which first began working together at the Fields Institute in the Summer of 2017 will explore further theorems in this direction using similar techniques.

The proof of this theorem will follow similar to the proof of Theorem~\ref{WarpConv}
however we have some additional difficulties arising.  The main difficulty is that $f_j \rightarrow f_{\infty}$ in  $L^2([-\pi,\pi]^2)$ does not imply that $f_j \rightarrow f_{\infty}$ on curves and hence we will not be able to prove the corresponding results to Lemma \ref{warp-L} and \ref{Theta(C_j)-WEst} for this setting. Instead in Lemmas \ref{ConstzControl}, \ref{ConstxyControl}, and \ref{MonotoneControl} we will build approximating sequences of curves to a geodesic with respect to $g_{\infty}$ and show $\displaystyle \limsup_{j\rightarrow \infty} d_j(p,q) \le d_{\infty}(p,q)$. The $C^0$ control on $f_j$ works similarly to section \ref{sect-WarProd} and hence we are able to show $\displaystyle \liminf_{j\rightarrow \infty} d_j(p,q) \ge d_{\infty}(p,q)$ in Lemma \ref{distLowerBound2} This will imply pointwise convergence of distances which when combined with Theorem \ref{HLS-thm} will show uniform, GH ans SWIF convergence, similar to the examples in section \ref{Sect_Ex}.

\subsection{A lower $C^0$ bound}

We now prove a lemma which shows the consequence of a $C^0$ lower bound which we have seen is important by the examples in section \ref{Sect_Ex}.
 
 \begin{lem}\label{distLowerBound2} Let $p,q \in M_j$ and assume that 
 \be
 f_{j}(x,y) \ge f_{\infty}(x,y) - \frac{1}{j} > 0 \textrm{ and } diam(M_j) \le D.
 \ee 
 Then
\begin{align}
\liminf_{j \rightarrow \infty} d_{j}(p,q) \ge d_{\infty}(p,q)
\end{align}
\end{lem}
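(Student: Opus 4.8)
The plan is to mimic the one-variable argument in Lemma~\ref{distLowerBound}. Fix $p,q \in M_j$ and let $C_j(t) = (x_j(t), y_j(t), z_j(t))$, $t\in[0,L_j(C_j)]$, be an absolutely continuous unit-speed $g_j$-geodesic realizing $d_j(p,q)$, which exists by Remark~\ref{s-limit-geods}. Write $|C_j'|_{g_j}^2 = (x_j')^2 + (y_j')^2 + f_j(x_j,y_j)^2 (z_j')^2 = 1$ a.e. First I would pointwise replace $f_j$ by the smaller quantity $f_\infty - \tfrac1j$ inside the integrand of $L_{g_j}(C_j)$, then expand the square under the radical exactly as in \eqref{posIntegrand}:
\begin{align}
(f_\infty(x_j,y_j) - \tfrac1j)^2 (z_j')^2 = f_\infty(x_j,y_j)^2 (z_j')^2 - \Big(\tfrac{2}{j} f_\infty(x_j,y_j) - \tfrac{1}{j^2}\Big)(z_j')^2.
\end{align}
Using the lower bound $f_j \ge f_\infty - \tfrac1j > 0$ the original integrand is nonnegative, and the quantities under the subsequent radicals are nonnegative, so the elementary inequality $\sqrt{|a-b|} \ge \sqrt{a} - \sqrt{b}$ applies to give
\begin{align}
d_{g_j}(p,q) \ge L_{g_\infty}(C_j) - \frac{1}{\sqrt{j}} \int_0^{L_j(C_j)} |z_j'(t)| \sqrt{2 f_\infty(x_j,y_j) - \tfrac1j}\; dt.
\end{align}

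Next I would bound the error term. Since $|C_j'|_{g_j} = 1$ a.e. and $f_j \ge \min f_j > 0$, we get $|z_j'(t)| \le 1/\min f_j$ a.e., and $\sqrt{2f_\infty - 1/j} \le \sqrt{2}\,(\max \sqrt{f_\infty})$; together with $L_j(C_j) \le D$ this yields the uniform estimate
\begin{align}
d_{g_j}(p,q) \ge L_{g_\infty}(C_j) - \frac{\sqrt{2}\,(\max_{[-\pi,\pi]^2}\sqrt{f_\infty})\, D}{(\min f_j)\sqrt{j}}.
\end{align}
Finally, $L_{g_\infty}(C_j) \ge d_{g_\infty}(p,q)$ since $C_j$ joins $p$ to $q$, and the error term tends to $0$ as $j\to\infty$ (note $\min f_j$ stays bounded below, e.g.\ by $\min f_\infty - 1 >0$ for large $j$ or by the uniform lower bound on $f_j$), so taking $\liminf$ gives $\liminf_{j\to\infty} d_j(p,q) \ge d_\infty(p,q)$.

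I expect no serious obstacle here: the two-variable warping causes no trouble on the lower-bound side because the warping factor still multiplies only the single $(z_j')^2$ term, so the algebraic manipulation is identical to Lemma~\ref{distLowerBound}. The only point requiring a little care is justifying that $\min_{[-\pi,\pi]^2} f_j$ is bounded away from $0$ uniformly in $j$, which follows from $f_j \ge f_\infty - \tfrac1j$ and $f_\infty = c > 0$; this is exactly where the $C^0$ lower bound is used, and it is also why the hypothesis $diam(M_j)\le D$ is included, to control $L_j(C_j)$.
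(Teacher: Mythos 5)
Your proposal is correct and follows essentially the same route as the paper's proof: replace $f_j$ by $f_\infty-\tfrac1j$ along the unit-speed minimizing geodesic, expand the square, apply $\sqrt{|a-b|}\ge\sqrt{a}-\sqrt{b}$, bound $|z_j'|\le 1/\min f_j$ and $L_j(C_j)\le D$, and let $j\to\infty$. The paper carries out exactly this computation, including the same uniform error term $\sqrt{2}\,\bigl(\max_{[-\pi,\pi]^2}\sqrt{f_\infty}\bigr)D\big/\bigl(\min_{[-\pi,\pi]^2}f_j\,\sqrt{j}\bigr)$.
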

 \begin{proof} 
Let $C_j(t)=(x_j(t),y_j(t),z_j(t))$ be the minimizing absolutely continuous geodesic in $M_j$, parameterized so that $|C_j'(t)|_{g_j}=1$ a.e., realizing the distance between $p$ and $q$ then compute
\begin{align}
 g_j(C_j'(t),C_j'(t)) &= {x_j'(t)^2+y_j'(t)^2+f_j(x_j(t),y_j(t))^2 |z_j'(t)|^2}
 \\&\ge  {x_j'(t)^2+y_j'(t)^2+(f_{\infty}(x_j(t),y_j(t)) - \tfrac{1}{j})^2 |z_j'(t)|^2}
  \\&= x_j'(t)^2+y_j'(t)^2+f_{\infty}(x_j(t),y_j(t))^2|z_j'(t)|^2 
  \\& \quad -  \left(\tfrac{2}{j}f_{\infty}(x_j(t),y_j(t))|z_j'(t)|^2 - \tfrac{1}{j^2} |z_j'(t)|^2\right ) 
  \end{align}
  Note that the terms here are positive by the assumptions of the lemma, so that when we take the
  square root we can apply
  the inequality 
  \be
  \sqrt{|a-b|}\ge |\sqrt{a}-\sqrt{b}|\ge \sqrt{a}-\sqrt{b},
  \ee 
  before integrating to obtain
  \begin{align}
  \qquad d_{g_j}&(p,q) 
 =   \int_0^{L_j(C_j)} \sqrt{g_j(C_j'(t),C_j'(t)) }\, dt 
   \\&\ge \int_0^{L_j(C_j)} 
         \sqrt{x_j'(t)^2+y_j'(t)^2+f_{\infty}(x_j(t),y_j(t))^2|z_j'(t)|^2}dt 
  \\& - \int_0^{L_j(C_j)}
     \sqrt{\tfrac{2}{j}f_{\infty}(x_j(t),y_j(t))|z_j'(t)|^2 - \tfrac{1}{j^2} |z_j'(t)|^2 } \,dt
    \\&\ge L_{g_{\infty}}(C_j)- \tfrac{1}{\sqrt{j}}\int_0^{L_j(C_j)}|z_j'(t)|\sqrt{\left(2f_{\infty}(x_j(t),y_j(t)) - \tfrac{1}{j} \right )}\,dt
 \end{align} 
Now we notice that 
\begin{align}
|C_j'(t)|_{g_j} &=\sqrt{x_j'(t)^2+y_j'(t)^2+f_j(x_j(t),y_j(t))^2|z_j'(t)|^2} =1 \text{ a.e.}
\\\Rightarrow |z_j'(t)| &\le \frac{1}{f_j(x_j(t),y_j(t))} \text{ a.e.}
\end{align}
and hence we can then conclude that
 \be
  d_{g_j}(p,q) \ge d_{g_{\infty}}(p,q) -\frac{\sqrt{2}\max_{[-\pi,\pi]^2}\sqrt{f_{\infty}} D}{\min_{[-\pi,\pi]^2}f_{j}\sqrt{j}}.
\ee
 The desired result follows by taking limits.
\end{proof}

We now prove that we have uniform bounds on the diameter which was used in Lemma \ref{distLowerBound2}:

 \begin{lem} \label{lem-diam2} 
 If $\|f_j-f_{\infty}\|_{L^2} \le \delta_j$ and $M_j$ are warped products as in Theorem \ref{WarpConv2}  
 then
 \be
  \diam(M_j) \le 4\sqrt{2}\pi 
  + 2 \pi\left( \|f_\infty\|_{C_0} + \tfrac{\delta_j}{2 \pi }\right) .
  \ee
  \end{lem}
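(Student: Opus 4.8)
The plan is to reprise the argument of Lemma~\ref{lem-diam}, with the radial interval $[r_0,r_1]$ replaced by a $z$-slice $\{z=\mathrm{const}\}$ and the cross-sectional level set replaced by a $z$-fiber $\{x=x_0,\,y=y_0\}$. Fix $p=(x_p,y_p,z_p)$ and $q=(x_q,y_q,z_q)$ in $M_j$, and let $(x_0,y_0)\in[-\pi,\pi]^2$ be a point to be chosen. Since $d_j$ is an infimum of lengths of curves, $d_j(p,q)$ is bounded above by the length of the concatenation of: a curve from $p$ to $(x_0,y_0,z_p)$ inside the slice $\{z=z_p\}$; a curve from $(x_0,y_0,z_p)$ to $(x_0,y_0,z_q)$ inside the fiber $\{x=x_0,\,y=y_0\}$; and a curve from $(x_0,y_0,z_q)$ to $q$ inside the slice $\{z=z_q\}$. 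On each slice the metric restricts to the Euclidean $dx^2+dy^2$ on $[-\pi,\pi]^2$, so each slice-curve may be taken of length at most $2\sqrt2\,\pi$ (a diagonal of the coordinate square --- not sharp, but this is the constant used); on the fiber the metric restricts to $f_j(x_0,y_0)^2\,dz^2$ over $z\in[-\pi,\pi]$, so the middle curve may be taken of length at most $2\pi\,f_j(x_0,y_0)$. Hence
\[
d_j(p,q)\ \le\ 4\sqrt2\,\pi\ +\ 2\pi\,f_j(x_0,y_0).
\]

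It remains to choose $(x_0,y_0)$ so that $f_j(x_0,y_0)\le\|f_\infty\|_{C_0}+\tfrac{\delta_j}{2\pi}$. This is the only place where the two-variable setting differs from Lemma~\ref{lem-diam}: there a suitable level existed automatically from a one-dimensional $L^2$ bound, whereas here we must extract a single good point from an $L^2$ bound on a square. Take $(x_0,y_0)$ to be a minimizer of the continuous function $|f_j-f_\infty|$ on the compact set $[-\pi,\pi]^2$, which has area $4\pi^2$; then
\[
|f_j(x_0,y_0)-f_\infty(x_0,y_0)|^2\ \le\ \frac{1}{4\pi^2}\int_{[-\pi,\pi]^2}|f_j-f_\infty|^2\,dx\,dy\ \le\ \frac{\delta_j^2}{4\pi^2},
\]
so $|f_j(x_0,y_0)-f_\infty(x_0,y_0)|\le\delta_j/(2\pi)$, and since $f_\infty(x_0,y_0)\le\|f_\infty\|_{C_0}$ we get $f_j(x_0,y_0)\le\|f_\infty\|_{C_0}+\delta_j/(2\pi)$. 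Substituting into the previous display and taking the supremum over $p,q\in M_j$ yields $\diam(M_j)\le 4\sqrt2\,\pi+2\pi\bigl(\|f_\infty\|_{C_0}+\tfrac{\delta_j}{2\pi}\bigr)$, as claimed.

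I expect no genuine obstacle here: the estimate is a routine adaptation of Lemma~\ref{lem-diam}, and the one new ingredient --- passing from $L^2$-smallness of $f_j-f_\infty$ on $[-\pi,\pi]^2$ to its smallness at a single point --- is exactly the one-line averaging argument above. It is worth emphasizing that this succeeds precisely because the diameter estimate only requires control of $f_j$ at \emph{one} point; the genuine two-variable difficulty, namely that $L^2([-\pi,\pi]^2)$ convergence does not force convergence along curves, is what necessitates the more delicate curve-building arguments used elsewhere in Section~\ref{sect-WarProd2Var} to obtain the $\limsup$ estimate on distances.
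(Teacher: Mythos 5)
Your proof is correct and follows essentially the same route as the paper: the same three-segment path (slice, $z$-fiber, slice) giving $d_j(p,q)\le 4\sqrt2\,\pi + 2\pi f_j(x_0,y_0)$, and the same averaging of $|f_j-f_\infty|^2$ over the square of area $4\pi^2$ to find a point where $f_j(x_0,y_0)\le \|f_\infty\|_{C_0}+\delta_j/(2\pi)$. Your phrasing via a minimizer of the continuous function $|f_j-f_\infty|$ is just a concrete way of producing the point whose existence the paper asserts by the same mean-value reasoning.
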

  
  \begin{proof}
  Let $p,q\in M_j$ with $p = (x_1,y_1,z_1)$ and $q=(x_2,y_2,z_2)$.   Recall that the distance between these
  points is the infimum over lengths of all curves.  For any $(x_0,y_0) \in [-\pi,\pi]^2$ we can take a 
  first path from $p$ to $(x_0,y_0,z_1)$ which stays in a plane parallel to the $xy-$plane, then a second path from $(x_0,y_0,z_1)$ to $(x_0,y_0,z_2)$ parallel to the $z$ axis,
  and then a third path from $(x_0,y_0,z_2)$ to $(x_2,y_2,z_2)$ which stays in a plane parallel to the $xy-$plane.  The first and third paths each have
   length $\le 2\sqrt{2}\pi$, and the middle path has length bounded above by $2 \pi$ with respect to the flat metric.  Thus we have
  \begin{eqnarray}
  \qquad d_j(p,q) &\le& 4\sqrt{2}\pi + 2 \pi f_j(x_0,y_0)  \\
  &\le & 4\sqrt{2}\pi + 2 \pi \left(\, f_\infty(x_0,y_0) + |f_j(x_0,y_0)-f_\infty(x_0,y_0)|\, \right) \,.
  \end{eqnarray}
  Choosing an $(x_0,y_0)$ such that 
  \be
  |f_j(x_0,y_0)-f_\infty(x_0,y_0)|^2 \le \frac{1}{4 \pi^2} \int_{-\pi}^{\pi} \int_{-\pi}^{\pi}|f_j(x,y)-f_\infty(x,y)|^2 \, dxdy
  \ee
  we have
  \be
  |f_j(x_0,y_0)-f_\infty(x_0,y_0)| \le \frac{\|f_j-f_\infty\|_{L_2}}{2 \pi}
  \ee
  and  $f_\infty(x_0,y_0) \le \|f_\infty\|_{C_0}$.
  \end{proof}

\subsection{$L^2$ convergence and convergence of distances}

In this section we will build sequences of curves whose length approximates the length of a fixed geodesic with respect to $g_{\infty}$ whose warping function is a constant. 

We start by approximating a geodesic which has constant $z$ component which is simple since $g_j$ agrees with $g_{\infty}$ in the $x$ and $y$ directions.
 
  \begin{lem}\label{ConstzControl}
Let $p, q \in [-\pi,\pi]^3$ so that $p=(x_1,y_1, z_0)$ and $q=(x_2,y_2,z_0)$. If $f_{\infty} = c > 0$ then we have that
\begin{align} 
\limsup_{j \rightarrow \infty} d_{j}(p,q) \le d_{\infty}(p,q).
\end{align}
 \end{lem}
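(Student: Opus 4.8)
The plan is to produce a single competitor curve from $p$ to $q$ whose $g_j$-length equals its $g_\infty$-length for \emph{every} $j$; this gives $d_j(p,q)\le d_\infty(p,q)$ for all $j$, so the $\limsup$ bound (indeed something stronger) is immediate.

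First I would note that, since $p=(x_1,y_1,z_0)$ and $q=(x_2,y_2,z_0)$ share the same $z$-coordinate and $g_\infty=dx^2+dy^2+c^2\,dz^2$ is a product metric, the distance $d_\infty(p,q)$ equals the flat distance between $(x_1,y_1)$ and $(x_2,y_2)$ in ${\mathbb S}^1\times{\mathbb S}^1$: projecting any curve from $p$ to $q$ onto the slice $\{z=z_0\}$ deletes the nonnegative term $c^2 z'(t)^2$ from the length integrand and hence does not increase the length, while a flat geodesic $(x(t),y(t))$ in ${\mathbb S}^1\times{\mathbb S}^1$ joining $(x_1,y_1)$ to $(x_2,y_2)$, extended by $z(t)\equiv z_0$, realizes this value. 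Write $C(t)=(x(t),y(t),z_0)$ for this curve, so that $L_\infty(C)=d_\infty(p,q)$.

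The key observation is that $z'(t)=0$ a.e.\ along $C$, so the warping factor never enters the length computation:
\begin{align*}
L_j(C) &= \int_0^1 \sqrt{\,x'(t)^2+y'(t)^2+f_j(x(t),y(t))^2\cdot 0\,}\;dt\\
&= \int_0^1 \sqrt{\,x'(t)^2+y'(t)^2\,}\;dt \;=\; L_\infty(C)\;=\;d_\infty(p,q).
\end{align*}
Since $C$ is an admissible curve in $M_j$ joining $p$ to $q$, we obtain $d_j(p,q)\le L_j(C)=d_\infty(p,q)$ for every $j$, and taking $\limsup_{j\to\infty}$ gives the claim. If one wishes to avoid invoking the existence of a minimizing $g_\infty$-geodesic, the same computation applies verbatim to an $\epsilon$-almost minimizer of constant $z$-component, followed by $\epsilon\to 0$.

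There is no genuine obstacle here: this is the easy base case, using only that $g_j$ and $g_\infty$ agree in the $x$ and $y$ directions, and it needs neither the $L^2$ convergence nor the $C^0$ bounds. The real work is postponed to the subsequent lemmas, where one must approximate $g_\infty$-geodesics that actually move in the $z$-direction; there $f_j$ does contribute to $L_j$ and one must exploit the smallness of the set on which $f_j$ differs appreciably from $f_\infty$, exactly as in the proof of Lemma~\ref{ConstRLengthBound}.
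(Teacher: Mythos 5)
Your proof is correct and follows essentially the same route as the paper: exhibit a competitor curve of constant $z$-component joining $p$ to $q$ (a flat geodesic in the slice $\{z=z_0\}$), note that $g_j$ and $g_\infty$ agree in the $x$ and $y$ directions so its $L_j$-length equals $d_\infty(p,q)$, and conclude $d_j(p,q)\le d_\infty(p,q)$ for all $j$. Your extra remark that $d_\infty(p,q)$ coincides with the flat slice distance (via projection) is a slightly more careful justification of what the paper asserts by choosing coordinates, but the argument is the same.
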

 
 \begin{proof}
 Let $\gamma$ be a minimal geodesic with respect to $g_{\infty}$ from $p$ to $q$. Since $g_{\infty}$ is a Euclidean metric it is a straight line segment:
 \begin{align}
 \gamma(t)=(x_1(1-t)+x_2t, y_1(1-t)+y_2t,z_0),
 \end{align}
 Note that we can choose coordinate so that this is the minimal geodesic with respect to $g_{\infty}$. Then we can compute,
 \begin{align}
 d_j(p,q) &\le L_j(\gamma)= \int_0^1 \sqrt{(x_2-x_1)^2+(y_2-y_1)^2}dt= d_{\infty}(p,q) ,
 \end{align}
 since $g_j$ agrees with $g_{\infty}$ in the $x$ and $y$ directions, by which the result follows by taking limits.
 \end{proof}
 
 We now construct a sequence of curves which approximates a fixed geodesic with respect to $g_{\infty}$ which is constant in $x$ and $y$.
 
 \begin{lem}\label{ConstxyControl}
Assume that $f_j \rightarrow f_\infty = c > 0$ in $L^2$ and let  $p, q \in [-\pi,\pi]^3$ so that $p=(x_0,y_0, z_1)$ and $q=(x_0,y_0,z_2)$ then we have that
\begin{align} 
\limsup_{j \rightarrow \infty} d_{j}(p,q) \le d_{\infty}(p,q).
\end{align}
 \end{lem}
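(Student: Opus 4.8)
The plan is to produce, for each large $j$, an explicit detour curve from $p$ to $q$ whose $g_j$-length is close to $d_\infty(p,q)$. First I would identify the competitor on the limit side: since $g_\infty=dx^2+dy^2+c^2dz^2$ is flat, the $g_\infty$-minimizing geodesic between $p=(x_0,y_0,z_1)$ and $q=(x_0,y_0,z_2)$ keeps $x,y$ fixed and runs along the $z$-circle, so $d_\infty(p,q)=c\,d_{\mathbb{S}^1}(z_1,z_2)$. One cannot simply feed this vertical segment into $L_j$, because $L^2$ convergence of $f_j$ on the square $[-\pi,\pi]^2$ says nothing about the values of $f_j$ on the single vertical line $\{(x_0,y_0)\}\times\mathbb{S}^1$; this is the obstacle, and the fix exploits that $f_j$ is independent of $z$: slide horizontally to a generic nearby point $(x_j,y_j)$ over which $f_j$ is controlled, go up there, and come back.

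Next I would make the genericity precise, exactly as in the proof of Lemma~\ref{ConstRLengthBound}. Fix $\epsilon>0$ and set $S_\epsilon^j=\{(x,y)\in[-\pi,\pi]^2:|f_j(x,y)-c|\ge\epsilon\}$; Chebyshev's inequality gives $|S_\epsilon^j|\le\|f_j-c\|_{L^2}^2/\epsilon^2\to 0$. Hence for any fixed $\rho\in(0,\pi)$ there is $J$ such that $|S_\epsilon^j|<\pi\rho^2$ for all $j\ge J$, and then the disk $B_\rho(x_0,y_0)$ is not contained in $S_\epsilon^j$, so we may choose $(x_j,y_j)\in B_\rho(x_0,y_0)$ with $|f_j(x_j,y_j)-c|<\epsilon$.

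Then I would write down the comparison curve $C_j$: from $p$ to $(x_j,y_j,z_1)$ inside the slice $\{z=z_1\}$, then from $(x_j,y_j,z_1)$ to $(x_j,y_j,z_2)$ along a shortest arc of the $z$-circle, then from $(x_j,y_j,z_2)$ back to $q$ inside the slice $\{z=z_2\}$. On the two horizontal pieces $z'\equiv 0$, so $g_j$ restricts to the flat metric $dx^2+dy^2$ there and each piece has length less than $\rho$; the vertical piece has length $f_j(x_j,y_j)\,d_{\mathbb{S}^1}(z_1,z_2)<(c+\epsilon)\,d_{\mathbb{S}^1}(z_1,z_2)$ since $f_j$ does not depend on $z$. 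Therefore $d_j(p,q)\le L_j(C_j)<2\rho+(c+\epsilon)\,d_{\mathbb{S}^1}(z_1,z_2)$ for all $j\ge J$, whence $\limsup_{j\to\infty}d_j(p,q)\le 2\rho+(c+\epsilon)\,d_{\mathbb{S}^1}(z_1,z_2)$; letting $\rho\to 0$ and $\epsilon\to 0$ gives $\limsup_{j\to\infty}d_j(p,q)\le c\,d_{\mathbb{S}^1}(z_1,z_2)=d_\infty(p,q)$. This is the same triangle-type construction used in Lemmas~\ref{distEst-L} and~\ref{lem-diam2}, and no lower bound on $f_j$ is needed for the upper estimate; the only genuinely new ingredient, and the main point, is the measure-theoretic selection of the nearby point $(x_j,y_j)$.
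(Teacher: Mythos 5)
Your proof is correct and follows essentially the same route as the paper: a Chebyshev estimate on the bad set $S_\epsilon^j$ to select a nearby column $(x_j,y_j)$ where $|f_j-c|<\epsilon$, followed by the three-segment detour (horizontal in the slice $\{z=z_1\}$, vertical over $(x_j,y_j)$, horizontal in $\{z=z_2\}$), then sending $j\to\infty$ and $\epsilon\to 0$. The only cosmetic difference is that you fix a radius $\rho$ and let $\rho\to 0$ at the end, whereas the paper uses a $j$-dependent radius $4\sqrt{\delta_j}$; both are fine.
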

 
 \begin{proof}
 We claim that if 
 \begin{align}
 S_{\epsilon}^j = \{ (x,y) \in [-\pi,\pi]^2: |f_j(x,y)-f_{\infty}(x,y)|\ge \epsilon\}
 \end{align}
 then we must have that $|S_{\epsilon}^j| \le \delta_j$ where $\delta_j \rightarrow 0$ as $j \rightarrow \infty$ ($|S|$ represents Lebesgue measure of $S \subset [-\pi,\pi]^2$ with respect to the Euclidean metric). 
 If the claim were false then $|S_{\epsilon}^j| \ge C > 0$ and
 \begin{align}
\int_{-\pi}^{\pi}\int_{-\pi}^{\pi} |f_j(x,y)-f_{\infty}(x,y)|^2dx dy \ge \int_{S_{\epsilon}^j} |f_j(x,y)-f_{\infty}(x,y)|^2dA \ge C \epsilon^2
 \end{align}
 which contradicts $f_j \rightarrow f_{\infty}$ in $L^2$.
 
 Define the set
 \begin{align}
 T_{\epsilon}^j = \left(B\left((x_0,y_0),4\sqrt{\delta_j}\right) \setminus S_{\epsilon}^j\right)\cap [-\pi,\pi]^2.
 \end{align}
 Since eventually 
 \begin{align}
 \frac{|B((x_0,y_0),4\sqrt{\delta_j)}|}{4} =4 \pi  \delta_j > |S_{\epsilon}^j|,
 \end{align}
  we see that
 $T_{\epsilon}^j$ is non-empty .  Hence we can choose a $(x_{\epsilon}^j,y_{\epsilon}^j) \in T_{\epsilon}^j$.  
 
 \begin{figure} [h]
\centering
\includegraphics[width=4in]{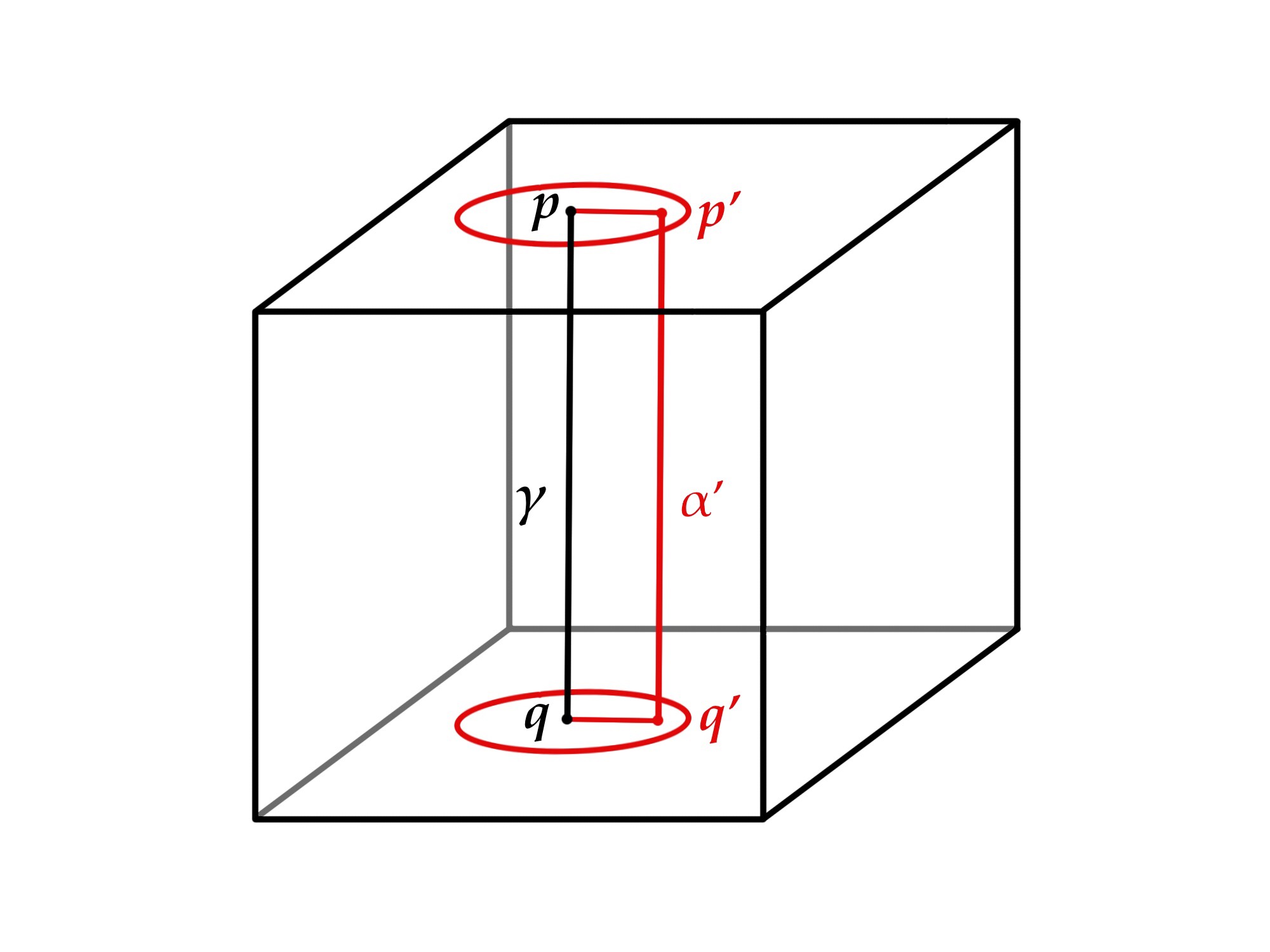}
\caption{$\alpha' = \alpha^j_{x_j}$ approximates the curve $\gamma$ between the points $p$ and $q$.}
\label{fig-cube-vert}
\end{figure}

A minimal geodesic $\gamma$ from $p= (x_0,y_0,z_1)$ to $q = (x_0,y_0,z_2)$
with respect to $g_\infty$ is purely vertical:
\be
\gamma(t)=(x_0, y_0, z_0(1-t)+z_2t)
\ee
where the addition is mod $2\pi$.  Note that 
$d_{\infty}(p,q) =c|z_2-z_1|$.
Let
\be
p'=(x_{\epsilon}^j,y_{\epsilon}^j,z_1) \textrm{ and }q'=(x_{\epsilon}^j,y_{\epsilon}^j,z_2).
\ee
So $d_\infty(p,p') < 4\sqrt{\delta_j}$ and $d_\infty(q,q') < 4\sqrt{\delta_j}$.
Also 
\be
d_{\infty}(p,q) =c|z_2-z_1|= d_\infty(p',q').
\ee

  We can define a curve $\alpha_{\epsilon}^j$ as in Figure~\ref{fig-cube-vert} which
  approximates $\gamma$.  This curve
 runs minimally with respect to $g_\infty$ from $p$ to $p'$ and then minimally to
  $q'$ and then minimally to $q$ as follows:
 \begin{align}
 \alpha_{\epsilon}^j(t) = 
 \begin{cases}
 (x_0(1-3t) + 3x_{\epsilon}^j t,\,y_0(1-3t)+3y_{\epsilon}^jt,\,z_1) & 0\le t \le 1/3
 \\(x_{\epsilon}^j,\, y_{\epsilon}^j,\, z_1(2-3t)+z_2(3t-1)) & 1/3\le t \le 2/3
 \\ (x_{\epsilon}^j(3-3t)+x_0(3t-2),\, y_{\epsilon}^j(3-3t))+ y_0 (3t-2),\, z_2)  & 2/3\le t \le 1
 \end{cases}
 \end{align}
 where the addition here is mod $2\pi$.   
  
Now we can compute
\begin{align}
d_j(p,q) &\le L_j(\alpha_{\epsilon}^j) 
\\&=\int_0^{1/3} \sqrt{|3x_{\epsilon}^j-3x_0|^2 + |3y_{\epsilon}^j-3y_0|^2}dt 
\\&+ \int_{1/3}^{2/3}|3z_2-3z_1|f_j(x_{\epsilon}^j,y_{\epsilon}^j) dt 
\\&+ \int_{2/3}^1 \sqrt{|3x_{\epsilon}^j-3x_0|^2 + |3y_{\epsilon}^j-3y_0|^2}dt.
\end{align}
Combining this with the definitions of $(x_{\epsilon}^j,y_{\epsilon}^j) \in T_{\epsilon}^j$ and using the continuity of $f_{\infty}$ we find
\begin{align}
d_j(p,q)&= 2\sqrt{|x_0-x_{\epsilon}^j|^2 + |y_0 - y_{\epsilon}^j|^2} + f_j(x_{\epsilon}^j,y_{\epsilon}^j) |z_2-z_1|
\\&\le 16 \sqrt{\delta_j} + |f_j(x_{\epsilon}^j,y_{\epsilon}^j) - f_{\infty}(x_{\epsilon}^j,y_{\epsilon}^j)| |z_2-z_1|+f_{\infty}(x_{\epsilon}^j,y_{\epsilon}^j)|z_2-z_1|
\\&\le 16 \sqrt{\delta_j}  + \epsilon |z_2-z_1|+c|z_2-z_1|.
\end{align}
where we are using the hypothesis that $f_{\infty} = c > 0$.

Now by noticing that $d_{\infty}(p,q) =c|z_2-z_1|$ and taking the limit as $j \rightarrow \infty$ we find
\begin{align}
\limsup_{j \rightarrow \infty} d_j(p,q)&\le \epsilon|z_1-z_0|+c|z_1-z_0| =\epsilon|z_1-z_0|+d_{\infty}(p,q)
\end{align}
and since this is true for all $\epsilon > 0$ the result follows.
 \end{proof}

 We now construct a sequence of curves which approximates a fixed geodesic with respect to $g_{\infty}$ which does not fall under the hypotheses of Lemma \ref{ConstzControl} or \ref{ConstxyControl}.
 
  \begin{lem}\label{MonotoneControl}
Assume that $f_j \rightarrow f_\infty = c > 0$ in $L^2$ and let  $p, q \in [-\pi,\pi]^3$ so that $p=(x_1,y_1, z_1)$, $q=(x_2,y_2,z_2)$ and $(x_1,y_1) \not = (x_2,y_2)$ then 
\begin{align} 
\limsup_{j \rightarrow \infty} d_{j}(p,q) \le d_{\infty}(p,q).
\end{align}
 \end{lem}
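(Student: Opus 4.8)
The idea is to build, for each $j$, an explicit comparison curve from $p$ to $q$ whose $g_j$--length is close to $d_\infty(p,q)$, obtained by taking the straight $g_\infty$--geodesic segment and translating it slightly within the $xy$--plane so as to avoid the (small) region where $f_j$ is far from $c$. First I would fix coordinates, exactly as in Lemma~\ref{ConstzControl}, so that the straight segment $\gamma(s)=(x_1,y_1,z_1)+s\,(x_2-x_1,\,y_2-y_1,\,z_2-z_1)$, $s\in[0,1]$, is a minimizing $g_\infty$--geodesic; writing $v=(x_2-x_1,y_2-y_1)\neq 0$ and $\Delta z=z_2-z_1$ this gives $d_\infty(p,q)=\sqrt{|v|^2+c^2(\Delta z)^2}$. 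As in Lemma~\ref{ConstxyControl}, set $S_\epsilon^j=\{(x,y)\in[-\pi,\pi]^2:\ |f_j(x,y)-c|\ge\epsilon\}$; by $L^2$--convergence $|S_\epsilon^j|\le\delta_j$ with $\delta_j\to0$.

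The key new step is the averaging argument that supplies a good translate. For $u$ in a small disk $B(0,\rho_j)\subset\mathbb{R}^2$ let $\gamma_u(s)=(x_1,y_1)+u+sv$ be the translated planar segment. A Fubini computation on the torus,
$\int_{B(0,\rho_j)}\int_0^1 \mathbf{1}_{S_\epsilon^j}(\gamma_u(s))\,ds\,du=\int_0^1\big|S_\epsilon^j\cap\big((x_1,y_1)+sv+B(0,\rho_j)\big)\big|\,ds\le\delta_j,$
produces a translation vector $u_j\in B(0,\rho_j)$ for which the bad parameter set $B_j=\{s\in[0,1]:\gamma_{u_j}(s)\in S_\epsilon^j\}$ has measure $\le\delta_j/(\pi\rho_j^2)$. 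I would then choose $\rho_j\to0$ with $\delta_j/\rho_j^2\to0$ (for instance $\rho_j=\delta_j^{1/3}$, the case $\delta_j=0$ being trivial via the untranslated segment) and form the comparison curve $\alpha_j$: a jump in the $xy$--plane at constant $z=z_1$ from $p$ to $(x_1,y_1,z_1)+(u_j,0)$ (of $g_j$--length $|u_j|\le\rho_j$, since $f_j$ is irrelevant at constant $z$), then the lifted diagonal $s\mapsto\big((x_1,y_1)+u_j+sv,\ z_1+s\Delta z\big)$, then a jump back at constant $z=z_2$ to $q$ (again of length $\le\rho_j$).

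On the diagonal the $g_j$--speed is $\sqrt{|v|^2+f_j(\gamma_{u_j}(s))^2(\Delta z)^2}$. Splitting $[0,1]=\big([0,1]\setminus B_j\big)\cup B_j$, using $|\sqrt a-\sqrt b|\le\sqrt{|a-b|}$ and $|f_j^2-c^2|=|f_j-c|(f_j+c)\le(K+c)|f_j-c|$, the good part is bounded by $d_\infty(p,q)+|\Delta z|\sqrt{(K+c)\epsilon}$, while the bad part is bounded by $|B_j|\,(|v|+K|\Delta z|)\le \frac{\delta_j}{\pi\rho_j^2}(|v|+K|\Delta z|)\to0$. Adding the two jumps gives $d_j(p,q)\le L_j(\alpha_j)\le d_\infty(p,q)+|\Delta z|\sqrt{(K+c)\epsilon}+o(1)$; taking $\limsup_{j\to\infty}$ and then $\epsilon\downarrow0$ yields the claim. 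The routine portions (the $\sqrt{|a-b|}$ and Hölder-type estimates) are entirely parallel to Lemmas~\ref{warp-L} and~\ref{ConstxyControl}; the genuine obstacle, and the reason this lemma needs its own argument, is precisely that $L^2([-\pi,\pi]^2)$ convergence gives no control of $f_j$ along a fixed one-dimensional curve — the fix is the Fubini averaging over the two-parameter family of planar translates of the segment, which succeeds because the segment is one-dimensional while the base is two-dimensional, so a translate with total "bad length" tending to $0$ exists with translation distance also tending to $0$. (One should also note that the torus structure makes the translates wrap around, so there is no boundary issue in the Fubini step.)
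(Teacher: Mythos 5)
Your construction is sound in outline and genuinely different from the paper's. The paper translates the $g_\infty$--geodesic through a one--parameter family (in the $x$--coordinate only, after assuming WLOG $y_1\neq y_2$), declares a translate ``bad'' according to the size of the line integral $\bar f_j(x')=\int_{\alpha_{x'}}|f_j-f_\infty|^2\,dt$, bounds the measure of the bad set by a change of variables carrying the Jacobian factor $|y_2-y_1|^{-1}$, and then controls the length of the chosen translate by factoring $|f_j^2-f_\infty^2|$ and applying H\"older along that line. You instead average over the two--parameter family of planar translates $u\in B(0,\rho_j)$ with the pointwise bad set $S^j_\epsilon$, and split the parameter interval into a good part and a bad part of measure at most $\delta_j/(\pi\rho_j^2)$. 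Your Fubini identity, the pigeonhole choice of $u_j$, the choice $\rho_j=\delta_j^{1/3}$, and the fact that the two constant--$z$ jumps cost only $2\rho_j$ are all correct, and your version has the advantage of needing neither the WLOG coordinate choice nor the degenerating factor $|y_2-y_1|^{-1}$.

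There is, however, one genuine gap relative to the lemma as stated: your bound on the bad part, $|B_j|\,(|v|+K|\Delta z|)$ (and the constant $(K+c)$ in the good part), invokes a uniform upper bound $f_j\le K$. That bound is a hypothesis of Theorem~\ref{WarpConv2}, not of Lemma~\ref{MonotoneControl}, whose only assumption is $f_j\to c>0$ in $L^2$; the paper's own proof never uses $K$. Without it your argument fails precisely on $B_j$: knowing only that the bad parameter set has small measure gives no control of $\int_{B_j}\sqrt{|v|^2+f_j^2\,\Delta z^2}\,ds$, since $f_j$ is unconstrained from above there. Two repairs are available: either state and use the bound $f_j\le K$ (harmless for the application, since the theorem assumes it), or strengthen the averaging so that the selected translate has small line $L^2$ deviation, i.e.\ average $\int_0^1|f_j(\gamma_u(s))-c|^2\,ds$ over $u\in B(0,\rho_j)$, which by the same Fubini computation has mean at most $\|f_j-c\|_{L^2}^2/(\pi\rho_j^2)$, and then estimate $\int_0^1|f_j^2-c^2|\,ds$ by Cauchy--Schwarz and the factoring $|f_j^2-c^2|=|f_j-c|\,|f_j+c|$ exactly as in the paper (and in Lemma~\ref{warp-L}); this eliminates $K$ and is in effect the paper's argument transplanted to your two--parameter family. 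A minor additional slip: if $\delta_j=0$ the untranslated segment need not work, since a $2$--dimensional null set can still contain the entire segment; but almost every translate in an arbitrarily small ball does, so this is cosmetic.
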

 
 \begin{proof}
 Without loss of generality we may assume that $y_1 \not = y_2$.
 Let $\gamma$ be the geodesic with respect to $g_{\infty}$ which runs from
 $p$ to $q$.  Since $g_{\infty}$ is a Euclidean metric, we can choose coordinates
 on ${\mathbb{S}}^1 \times {\mathbb{S}}^1 \times {\mathbb{S}}^1$ such that
  \begin{align}
 \gamma(t)=(\alpha(t) ,z_1(1-t)+z_2t),
 \end{align}
 where the addition is mod $2\pi$ and
 \begin{align}
 \alpha(t) =(x_1(1-t)+x_2t, y_1(1-t)+y_2t) \subset [-\pi,\pi]^2.
 \end{align}
 Since $g_\infty=dx^2+dy^2+c^2dz^2$, we have
 \be
 d_\infty(p,q)= \sqrt{(x_2-x_1)^2 + (y_2-y_1)^2}.
 \ee
 
 We construct a family of geodesics parallel to this geodesic 
 running from $p' = (x_1',y_1,z_1)$ to $q' = (x_2+x_1'-x_1,y_2,z_2)$
 where $x_1' \in B(x_1,1) \subset [-\pi,\pi]$ as follows
 \be
  \gamma_{x_1'}(t)=(\alpha_{x_1}'(t),z_1(1-t)+z_2t)
 \ee
  where
 \be
 \alpha_{x_1'}(t)=(x_1'(1-t) + (x_1' + x_2-x_1)t, y_1(1-t)+y_2t)
\ee
where the addition is mod $2\pi$ with values in $[-\pi, \pi)$.
Observe that $\alpha: (x',t) \to (x,y)$ defined by $\alpha(x',t)=\alpha_{x'}(t)$ is
\be
\alpha(x',t)=(x'+ (x_2-x_1) t, y_1 +(y_2-y_1)t)
\ee
so
\be \label{ch-of-var}
dx\wedge dy = ( 1 dx' +(x_2-x_1) dt) \wedge (0dx'+ (y_2-y_1) dt)= (y_2-y_1) dx'\wedge dt.
\ee 
 
Since $f_j \rightarrow f_{\infty}$ in $L^2$ we define
\begin{align}\label{line-g}
\bar{f}_j(x') = \int_{\alpha_{x'}} |f_j-f_{\infty}|^2 \,dt.
\end{align} 
We define the set
 \begin{align}
 S_{\epsilon}^j = \{ x' \in [-\pi,\pi): \bar{f}_j(x')\ge \epsilon\} \subset [-\pi,\pi),
 \end{align}
 and the set
 \begin{align}
 W = \{\alpha_{x'}(t): x' \in [-\pi,\pi) \text{ and } t \in [0,1]\}. 
 \end{align}
 By the definition of the line segments, $\alpha_{x_1'}$,  we have $W \subset (-\pi,\pi]^2$. 
   
Note that the set 
\begin{align}
T_{\epsilon}^j =\left(B(x_1,4\delta_j)\setminus S_{\epsilon}^j\right)\subset  [-\pi,\pi]
\end{align}
is non empty where $\delta_j=|S_{\epsilon}^j|$.   We claim $\delta_j \to 0$ as $j\to \infty$. 
Indeed
we have 
\begin{eqnarray}
\epsilon |S_{\epsilon}^j| &\le & \int_{x'\in S_{\epsilon}^j} \bar{f}(x')  dx' \\
 &\le & \int_{x'=-\pi}^\pi \bar{f}_j(x') dx' \\
&= & \int_{x'=-\pi}^\pi  \int_{\alpha_{x'}} |f_j-f_{\infty}|^2 \,dtdx'. 
\end{eqnarray}
Applying a change of variables as in (\ref{ch-of-var}), 
we have
\begin{eqnarray}
\delta_j &=& (\epsilon)^{-1}\int \int_{W}|f_j - f_{\infty}|^2  |y_2-y_1|^{-1} dy dx' \\
&\le & (\epsilon)^{-1}|y_2-y_1|^{-1} \int_{-\pi}^{\pi}\int_{-\pi}^{\pi}|f_j - f_{\infty}|^2 dy dx,
\end{eqnarray}
which converges to $0$ by the hypothesis that $f_j \to f_\infty$ in $L^2$.

\begin{figure} [h]
\centering
\includegraphics[width=4in]{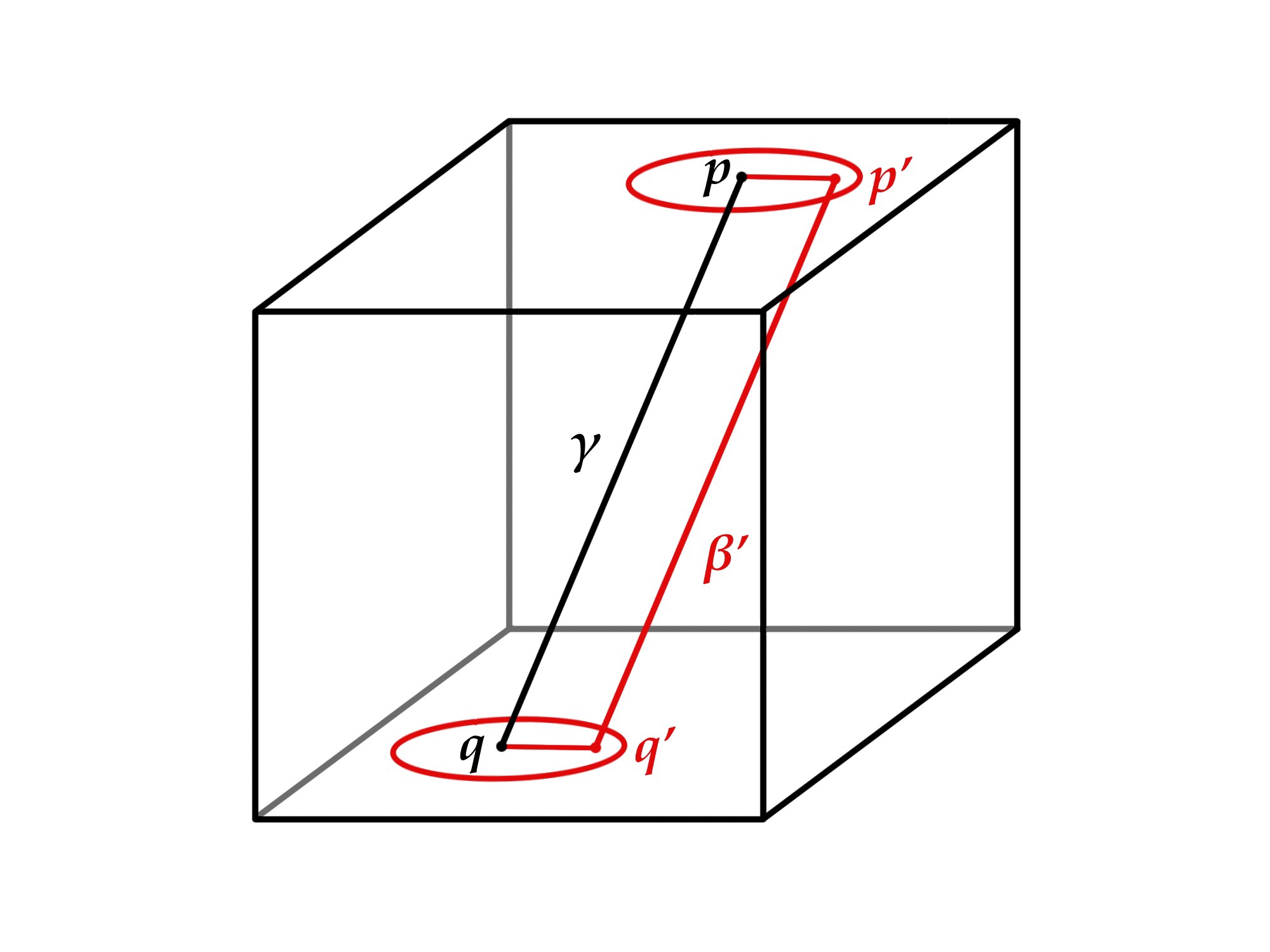}
\caption{$\beta' = \beta^j_{x_j}$ approximates the curve $\gamma$ between the points $p$ and $q$.}
\label{fig-CubeApprox}
\end{figure}

Since $T_{\epsilon}^j$ is nonempty, we can pick a  $x_j \in T_{\epsilon}^j$.  
We use this point to choose
\be
p'=p'_j=(x_{\epsilon}^j,y_{\epsilon}^j,z_1)
\textrm{ and } q'=q_j'=(x_{\epsilon}^j,y_{\epsilon}^j,z_2).
\ee
We can define a sequence of curves $\beta_{x_j}^j$
as in Figure \ref{fig-CubeApprox} which
which
 runs minimally with respect to $g_\infty$ from $p$ to 
 $p'$ and then minimally to
  $q'$ and then minimally to 
  $q$ as follows:
 \begin{align}
 \beta_{x_j}^j(t) = 
 \begin{cases}
 (x_1(1-3t) + 3x_j t,y_1,z_1) & 0\le t \le 1/3
 \\\gamma_{x_j}(3t-1) & 1/3\le t \le 2/3
 \\ ((x_j+x_2-x_1)(3-3t)+x_2(3t-2),y_2,z_2)  & 2/3\le t \le 1.
 \end{cases}
 \end{align}
The sequence of curves $\beta_{x_j}^j(t)$ is the approximating sequence to $\gamma$ which can be used to estimate $d_j(p,q)$ as follows
\begin{eqnarray*}
d_j(p,q) \le L_j(\beta_{x_j}) &=&\int_0^{1/3} |3x_j-3x_1|dt' 
\\&&+ \int_{1/3}^{2/3}\sqrt{|3\Delta x|^2 +|3 \Delta y|^2+ |3 \Delta z|^2 f_j^2(\alpha_{x_j}(3t'-1))} dt' 
\\&&+ \int_{2/3}^1 \sqrt{|3x_2-3(x_j+ x_2-x_1)|^2 }\,dt'
\end{eqnarray*}
where $\Delta x = |x_2-x_1|$,   $\Delta y=|y_2-y_1|$, and $\Delta z =|z_2-z_1|$.   Integrating
the first and last term, and taking $t=3t'-1$ we have
\begin{eqnarray*}
d_j(p,q) &\le & (1/3-0) |3x_j-3x_1| + (1-2/3)\sqrt{|3x_2-3x_j-3x_2+3x_1)|^2 }\\
\\&&+ \int_{0}^{1}\sqrt{|\Delta x|^2 +|\Delta y|^2+ |\Delta z|^2 f_j^2(\alpha_{x_j}(t)} \,dt\\
&\le& |x_j-x_1|+|x_j-x_1|+ \int_0^1\sqrt{\Delta x^2 +\Delta y^2+ \Delta z^2f_j^2(\alpha_{x_j}(t'))}  \,dt 
\\&&\le 2|x_j-x_1| + \int_0^1\sqrt{\Delta x^2 +\Delta y^2+ \Delta z^2f_{\infty}^2 + \Delta z^2 (f_j^2(\alpha_{x_j}(t)) - f_{\infty}^2) }  \,dt\\
&&\le 4 \delta_j+ \int_0^1\sqrt{\Delta x^2 +\Delta y^2+ \Delta z^2f_{\infty}^2}
\, dt  + \int_0^1\Delta z \sqrt{f_j^2(\alpha_{x_j}(t)) - f_{\infty}^2 } \, dt.
\end{eqnarray*}
Since $g_{\infty}$ is Euclidean, the middle term is $d_{\infty}(p,q)$.
Applying H\"older's inequality to the last term of yields
\be\label{last-dj}
d_j(p,q)\le 4\delta_j + d_{\infty}(p,q) +\Delta z \left (\int_0^1|f_j^2(\alpha_{x_j}(t)) - f_{\infty}^2|dt \right) ^{1/2}.
\ee
Recall that we chose $x_j\in T_\epsilon^j$ near $x$ so that 
$x_j \notin S_\epsilon^j$. Thus \eqref{line-g} implies that 
 \begin{align}
\int_{\alpha_{x_j}}|f_j - f_{\infty}|^2dt 
\int_0^1|f_j(\alpha_{x_j}(t)) - f_{\infty}|^2dt= \bar{f}_j(x_j) < \epsilon. 
\end{align}
 We can apply this to control the final term in (\ref{last-dj}) by
factoring and the applying H\"older's inequality and the triangle inequality 
\begin{eqnarray*}
\left(\int_{\alpha_{x_j}}|f_j^2 - f_{\infty}^2|dt\right)^{1/2} 
&\le &
\left(\int_{\alpha_{x_j}}|f_j- f_{\infty}||f_j+ f_{\infty}|dt \right)^{1/2}\\
&\le &  \left (\int_{\alpha_{x_j}}|f_j - f_{\infty}|^2dt \right) ^{1/4} 
\left (\int_{\alpha_{x_j}}|f_j+ f_{\infty}|^2 dt\right) ^{1/4}\\
&\le & \epsilon ^{1/4} \left (\int_{\alpha_{x_j}}|f_j - f_{\infty}+ 2f_\infty|^2dt \right) ^{1/4} 
\\
&\le &  \epsilon^{1/4} \left (\int_{\alpha_{x_j}}\left(|f_j - f_{\infty}|+ 2|f_\infty|\right)^2dt \right) ^{1/4} \\
&= &  \epsilon^{1/4} \left (\int_{\alpha_{x_j}}|f_j - f_{\infty}|^2
+ 4 |f_j - f_{\infty}| \, |f_\infty| + 4|f_\infty|^2 dt \right) ^{1/4} \\
&\le & \epsilon^{1/4} \left ( \epsilon
+ 4 c \int_{\alpha_{x_j}} |f_j - f_{\infty}|  \,dt  + 4c^2 \right) ^{1/4} \\
&\le & \epsilon^{1/4} \left ( \epsilon
+ 4c \left( \int_{\alpha_{x_j}} |f_j - f_{\infty}|^2 \, dt\right)^{1/2}  + 4c^2 \right) ^{1/4} \\
&\le & \epsilon^{1/4} \left ( \epsilon
+ 4\,c\, \epsilon^{1/2}  + 4c^2 \right) ^{1/4}.
\end{eqnarray*}
Substituting this into (\ref{last-dj}) we have
 \be
d_j(p,q)\le 4\delta_j + d_{\infty}(p,q) +\Delta z  \epsilon^{1/4} \left ( \epsilon
+ 4\,c\, \epsilon^{1/2}  + 4c^2 \right) ^{1/4}.
\ee
Now by taking limits as $j \rightarrow \infty$ we find
\begin{align}
\limsup_{j\rightarrow \infty} d_j(p,q)&\le d_{\infty}(p,q) +\Delta z \epsilon^{1/4} \left ( \epsilon
+ 4\,c\, \epsilon^{1/2}  + 4c^2 \right) ^{1/4}.
\end{align}
Since this is true for all $\epsilon > 0$ the lemma follows.
 \end{proof}
 
\subsection{Proof of Theorem \ref{WarpConv2}.}
 In this section we finish the proof of Theorem \ref{WarpConv2} which follows by the results of the last two subsections combined with Theorem \ref{HLS-thm}.
 
 \begin{proof}
 Let $p,q \in [-\pi,\pi]^3$ then by Lemma \ref{distLowerBound} we have
\begin{align}\label{distliminf2}
\liminf_{j \rightarrow \infty} d_{j}(p,q) \ge d_{\infty}(p,q).
\end{align}
By Lemmas \ref{ConstzControl}, \ref{ConstxyControl} or \ref{MonotoneControl} we have 
\begin{align}
\limsup_{j \rightarrow \infty} d_j(p,q)\le  d_{\infty}(p,q).
\end{align}

So by combining with \eqref{distliminf2} we conclude
\begin{align}
\lim_{j \rightarrow \infty} d_j(p,q) = d_{\infty}(p,q)\label{pointwiseDistConv2}
\end{align}
which gives pointwise convergence of distances.

Now by the assumption that $0 < c - \frac{1}{j} \le f_j \le K$ we can apply Lemma \ref{biLip} and choose $\lambda =\max \left(\frac{1}{\min(c/2,1)},\max(1,K) \right) > 0$ so that for $j$ chosen large enough we find
\begin{align}
\lambda \ge \frac{d_j(p,q)}{d_1(p,q)}\ge \frac{1}{\lambda}.\label{distLipBound4}
\end{align}
where $d_1$ is the distance defined with warping factor $1$.

Hence we can apply Theorem \ref{HLS-thm} to conclude that there exists a length metric $d_\infty'$ and a subsequence $d_{j_k}$ so that $d_{j_k}$ converges uniformly to $d_{\infty}'$, and GH and SWIF converges as well. By the pointwise convergence \eqref{pointwiseDistConv2} we know that $d_{\infty} = d_{\infty}'$ and hence $d_{j_k}$ must uniformly converge to $d_{\infty}$. Since this is true for all the subsequences, we see that $d_j$ uniformly converges to $d_{\infty}$ and hence Gromov-Hausdorff and intrinsic flat converges as well. 
 \end{proof}

 \bibliographystyle{alpha}
 \bibliography{Allen}
 
\end{document}